\newtheorem{thm}{Theorem}[section]
\newtheorem{lem}[thm]{Lemma}
\newtheorem{cor}[thm]{Corollary}
\newtheorem{clm}[thm]{Claim}
\newtheorem{rem}[thm]{Remark}
\newtheorem{prop}[thm]{Proposition}
\newtheorem*{keylem}{Key Lemma}
\newtheorem*{mainthm}{Main Theorem}
\renewcommand\S{\mathcal{S}}
\newcommand\G{\mathcal{G}}
\newcommand\Zed{\mathbb{Z}}
\newcommand\card[1]{|#1|}
\renewcommand\geq{\geqslant}
\renewcommand\leq{\leqslant}
\renewcommand\deg{d}
\newcommand\AT{{\textrm{AT}}}
\newcommand\chil{\chi_\ell}
\renewcommand{\le}{\leqslant}
\renewcommand{\ge}{\geqslant}
\def\vph{\varphi}
\newcommand{\aside}[1]{\marginnote{\scriptsize{#1}}[0cm]}
\newcommand{\aaside}[2]{\marginnote{\scriptsize{#1}}[#2]}
\newcommand\Emph[1]{\textit{#1}\aside{#1}}
\newcommand\EmphE[2]{\textit{#1}\aaside{#1}{#2}}
\newenvironment{clmproof}[1]{\par\noindent\underline{Proof.}\space#1}{\leavevmode\unskip\penalty9999\hbox{}\nobreak\hfill\quad\hbox{$\diamondsuit$}\smallskip}
\def\aftermath{\par\vspace{-\belowdisplayskip}\vspace{-\parskip}\vspace{-\baselineskip}}
\title{Degeneracy and Colorings of Squares\\ of Planar Graphs without 4-Cycles}
\author{Ilkyoo Choi\thanks{Department of Mathematics,  College of Natural
Sciences, Hankuk University of Foreign Studies (HUFS), Republic of Korea; 
\texttt{ilkyoo@hufs.ac.kr}; 
Supported by the Basic Science Research Program through the National Research
Foundation of Korea (NRF) funded by the Ministry of Education
(NRF-2018R1D1A1B07043049), and also by the Hankuk University of Foreign Studies Research Fund.
} \and 
Daniel W. Cranston\thanks{Department of Mathematics and Applied Mathematics,
Virginia Commonwealth University, Richmond, VA, USA; 
\texttt{dcranston@vcu.edu}; This research is partially supported by NSA Grant
H98230-15-1-0013.} \and Th\'{e}o Pierron\thanks{%
Univ.~Bordeaux, Bordeaux INP, CNRS, LaBRI, UMR 5800, F-33400 Talence, France;
\texttt{tpierron@labri.fr}} 
}
\begin{document}
\maketitle
\abstract{
We prove several results on coloring squares of planar graphs without 4-cycles.
First, we show that if $G$ is such a graph, then $G^2$ is
$(\Delta(G)+72)$-degenerate.  This implies an upper bound of $\Delta(G)+73$ on
the chromatic number of $G^2$ as well as on several variants of the chromatic
number such as the list-chromatic number, paint number, Alon--Tarsi number, and
correspondence chromatic number.  We also show that if $\Delta(G)$ is
sufficiently large, then the upper bounds on each of these parameters of $G^2$
can all be lowered to $\Delta(G)+2$ (which is best possible).  To complement
these results, we show that 4-cycles are unique in having this property. 
Specifically, let $S$ be a finite
list of positive integers, with $4\notin S$.  For each constant $C$, we
construct a planar graph $G_{S,C}$ with no cycle with length in $S$,
but for which $\chi(G_{S,C}^2) > \Delta(G_{S,C})+C$.
}

\section{Introduction}

The \Emph{square}, $G^2$, of a graph $G$ is formed from $G$ by adding an edge
$vw$ for each pair of vertices, $v$ and $w$, at distance two in $G$.  It is easy
to check that $\chi(G^2)\le \Delta(G^2)+1 \le \Delta(G)^2+1$, and this bound
can be tight, as when $G$ is the 5-cycle or the Petersen graph (here $\chi$ and
$\Delta$ denote, respectively, the chromatic number and maximum
degree).\footnote{For simplicity, in this introduction we discuss only standard
vertex coloring.  But starting in Section~\ref{sec2} we consider degeneracy, 
and at the end of that section we mention multiple other graph coloring parameters.}  
Even when $\Delta(G)$ is arbitrarily large, there exist constructions showing
that this upper bound on $\chi(G)$ cannot be improved much.  For example, when
$G$ is the incidence graph of a projective plane,\footnote{This incidence graph $G$ is
$(k+1)$-regular and bipartite with each part of size $k^2+k+1$.  Since each pair
of vertices within a part has a common neighbor, $\omega(G^2)=k^2+k+1$.}  
we have $\chi(G^2) \approx \Delta(G)^2-\Delta(G)$.
However, for planar graphs, we have much better bounds on $\chi(G^2)$.

Recall that Euler's formula implies that every planar graph $G$ is
5-degenerate.  Coloring vertices greedily in the reverse of this
degeneracy order~\cite{Jonas},\cite[Theorem 4.9]{Guide} shows that
$\chi(G^2)\le 9\Delta(G)$.  Refinements of this approach have led to
successive improvements of this upper bound, culminating with the
result of Molloy and Salavatipour~\cite{MolloySalavatipour} that
$\chi(G^2)\le \left\lceil{5\over 3}\Delta(G)\right\rceil+78$.
Havet et al.~\cite{HvdHMR} also proved that $\chi(G^2)\le
\frac32\Delta(G)(1+o(1))$,
which strengthens the bound of~\cite{MolloySalavatipour} when $\Delta(G)$ is
sufficiently large.  Amini et al.~\cite{AEvdH} proved the same bound for
all graphs embeddable in any fixed surface.

Every graph $G$ satisfies $\chi(G^2)\ge
\Delta(G)+1$, and for planar graphs we might naively hope to prove a matching
upper bound, or at least a bound of the form $\chi(G^2)\le \Delta(G)+C$, for
some constant $C$.  However, for each $k\in \Zed^+$, Wegner constructed a planar
graph $G_k$ with $\Delta(G_k)=k$ and $\chi(G^2_k) = \left\lfloor{3\over
2}k\right\rfloor+1$; Figure~\ref{fig:wegner} shows his construction.  So to
prove a bound of the form $\chi(G^2)\le \Delta(G)+C$, we must restrict to some
proper subset of planar graphs.

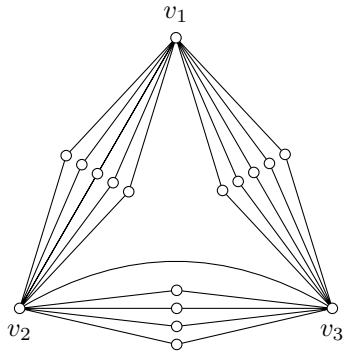
\begin{figure}[h!]
\centering
\begin{tikzpicture}
[
high/.style={inner sep=1.4pt, outer sep=0pt, circle, draw,fill=white} 
]

\def\e{3}
\def\es{1.2}
\def\et{0.65}
\def\R{2.1}
\def\r{1.5}

\begin{scope}[xshift=-2cm, yshift = 0cm, scale=0.8]  
\draw 

(90:\e) node[high,label=above:{\footnotesize $v_1$}] (v1){}
(90+120:\e) node[high,label=below:{\footnotesize $v_2$}] (v2){}
(90+240:\e) node[high,label=below:{\footnotesize $v_3$}] (v3){}

(v2)--(v1)

\foreach \i in {0,1,-1,2,-2}{
(v2) ++ (60:\e/1.16)  ++ (60+90:0.3*\i) node[high](x){}
(v2)--(x)-- (v1)
}

\foreach \i in {0,1,-1,2,-2}{
(v1) ++ (-60:\e/1.16)  ++ (-60+90:0.3*\i) node[high](x){}
(v1)--(x)-- (v3)
}

\foreach \i in {0,1,-1,2}{
(v3) ++ (-180:\e/1.16)  ++ (-180+90:0.3*\i) node[high](x){}
(v2)--(x)-- (v3)
}
;

\draw (v2) to [bend left=30] (v3);

\end{scope}

\end{tikzpicture}

{\caption{Wegner's construction \label{fig:wegner}}}
\end{figure}

Wang and Lih~\cite{WangLih} conjectured that, for each $g\ge 5$, there exists
$D_g$ such that
if $G$ is a planar graph with girth at least $g$ and $\Delta(G)\ge D_g$, then
$\chi(G^2)=\Delta(G)+1$.  This is true for $g\ge 7$~\cite{BGINT}. But it is false
for girth 5 and 6 since, for each $k\geq 3$, there exists a
planar graph $G_k$ with $\Delta(G_k)=k$ and with girth 6 such that
$\chi(G_k^2)=\Delta(G_k)+2$~\cite{BGINT}.
However, Dvo\v{r}\'ak et al.~\cite{DKNS} proved a surprising complementary
result: $\chi(G^2)\le \Delta(G)+2$ whenever $G$ is a planar graph with girth 6
and $\Delta(G)$ sufficiently large.  This work inspired analogous results
for planar graphs with (i) girth 5~\cite{BCP} and (ii) no 4-cycles or 5-cycles
(though 3-cycles are allowed)~\cite{DongXu}\footnote{Here we only hit the
highlights.  For a more detailed history of this problem, we recommend the
introduction of~\cite{HvdHMR} and~\cite[Conjecture 4.7 ff.]{Guide}.}.  In each
case the bound $\chi(G^2)\le \Delta(G)+2$
still holds (though the required lower bound on $\Delta(G)$ is larger).

The work above naturally leads to the following question.  Exactly which cycle
lengths can be forbidden from planar graphs to get a bound of the form
$\chi(G^2)\le \Delta(G)+C$?  For a set $\S$ of positive integers, let $\G_\S$
denote the family of planar graphs having no cycles with length in $\S$.

\begin{mainthm}
\label{mainthm1}
For a finite set $\S$ there exists a constant $C_\S$ such that $\chi(G^2)\le
\Delta(G)+C_\S$ for all $G\in \G_\S$ if and only if $4\in \S$.
\end{mainthm}

We prove the Main Theorem in two parts.  Immediately below we give a
construction that proves the ``only if'' part.  In
Section~\ref{sec:no-4-cycles} we handle the ``if'' part, the case when $4\in
\S$.  In fact, we prove the stronger statement that the vertices of every graph
$G\in \G_{\{4\}}$ can be ordered so that each vertex is preceded in the order
by at most $\Delta(G)+72$ of its neighbors in $G^2$.  Now the coloring result
follows by coloring greedily.  
In Section~\ref{sec3}, when $\Delta(G)$ is sufficiently large we
strengthen our bound to $\chi(G^2)\le \Delta(G)+2$, which is sharp.  This bound
also holds for paint number, Alon--Tarsi number, and correspondence chromatic
number (all defined at the end of Section~\ref{sec:no-4-cycles}).

\begin{lem}
If $4,2k\notin \S$, for some odd integer $k\ge 3$, then there does not exist a
constant $C_\S$ such that $\chi(G^2)\le \Delta(G)+C_\S$ for every $G\in \G_\S$.
\end{lem}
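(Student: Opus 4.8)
The plan is to prove the contrapositive in its strong form: for every constant $C$ exhibit a planar graph $G\in\G_\S$ with $\chi(G^2)>\Delta(G)+C$, which forbids any uniform $C_\S$. Write $k=2j+1$ with $j\ge 1$ (possible since $k$ is odd and $\ge 3$), and fix a large integer $c$ to be chosen at the end. The graph $G=G_{\S,C}$ is built from a $k$-cycle of ``hubs'' $z_0,z_1,\dots,z_{k-1}$ by replacing each edge $z_iz_{i+1}$ (indices mod $k$) with a bundle of $c$ internally disjoint paths of length two: delete the cycle edges and add degree-two vertices $w^i_1,\dots,w^i_c$, each adjacent to $z_i$ and to $z_{i+1}$. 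Equivalently $G$ is a subdivision of the planar multigraph obtained from $C_k$ by giving every edge multiplicity $c$, so $G$ is planar.

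First I would verify that the only cycle lengths in $G$ are $4$ and $2k$: a cycle contained in one bundle lives in a copy of $K_{2,c}$, hence has length $4$; any cycle meeting at least two bundles must run through the hubs in the cyclic order dictated by $C_k$, using exactly one midpoint per bundle, hence has length $2k$. Since $4\notin\S$ and $2k\notin\S$, this gives $G\in\G_\S$. Also $\Delta(G)=2c$, attained at every hub $z_i$, whose neighbours are precisely the $2c$ midpoints of the two incident bundles.

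The core is the lower bound on $\chi(G^2)$. Let $W=\{w^i_a:i\in\mathbb Z_k,\ a\in[c]\}$. Two midpoints of the same bundle, or of two consecutive bundles, have a hub as a common neighbour in $G$ and so are adjacent in $G^2$; two midpoints of non-consecutive bundles are degree-two vertices with disjoint neighbourhoods $\{z_i,z_{i+1}\}$, $\{z_\ell,z_{\ell+1}\}$ and hence are at distance $\ge 3$ in $G$. Thus $G^2[W]$ is exactly the blow-up $C_k[K_c]$ (the graph obtained from $C_k$ by substituting a clique $K_c$ for each vertex), the $k$ bundles playing the role of the vertices of $C_k$ under ``shares a hub''. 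An independent set of $C_k[K_c]$ picks at most one vertex from each blob and uses an independent set of blobs, so $\alpha\bigl(C_k[K_c]\bigr)=\alpha(C_k)=j$; since $C_k[K_c]$ has $kc$ vertices, the trivial bound $\chi\ge |V|/\alpha$ yields
\[
\chi(G^2)\ \ge\ \chi\bigl(C_k[K_c]\bigr)\ \ge\ \frac{kc}{j}\ =\ 2c+\frac{c}{j}.
\]
Taking $c=j(C+1)$ makes the right-hand side equal $2c+C+1>\Delta(G)+C$, which finishes the construction and hence the lemma.

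The step that takes genuine thought is choosing the right graph: the obvious Wegner-type ``triangle of lenses'' has cycle lengths $\{4,6\}$ and works only when $6\notin\S$, so to cover all admissible $\S$ one must replace the triangle by a general odd cycle $C_k$ with $2k\notin\S$. This is exactly where the hypothesis that $k$ is odd is used --- it is what makes $\chi_f(C_k)=2+\tfrac1j$ strictly bigger than $\Delta(C_k)=2$, so that blowing up produces a chromatic surplus growing with $c$ --- and it is why the bound on $\chi(G^2)$ must come from the fractional/independence-number bound on an odd-cycle blow-up rather than from a clique in $G^2$. With the construction pinned down, the remaining checks (planarity, the cycle-length enumeration, and the identification $G^2[W]\cong C_k[K_c]$) are routine.
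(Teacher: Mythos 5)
Your proposal is correct and matches the paper's proof: the construction (replacing each edge of a $k$-cycle by a bundle of $c$ paths of length two, i.e.\ a copy of $K_{2,c}$) is exactly the paper's $G_{k,t}$, and your lower bound via $\alpha\bigl(C_k[K_c]\bigr)=(k-1)/2$ is the same pigeonhole count of degree-two vertices per color class used in the paper. No gaps; this is the same argument.
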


\begin{proof}
Begin with a $k$-cycle and replace each edge $vw$ with a copy of $K_{2,t}$, so
that the two vertices of degree $t$ replace $v$ and $w$.  The resulting graph,
$G_{k,t}$ has maximum degree $2t$ and has cycles only of lengths $4$ and $2k$.
In every proper coloring of $G^2_{k,t}$, each color class contains at most
$(k-1)/2$ vertices of degree 2 in $G_{k,t}$ (by the Pigeonhole Principle). 
Since $G_{k,t}$ has $kt$ vertices of degree 2, we get $\chi(G^2_{k,t})\ge
kt/((k-1)/2) = 2kt/(k-1) = 2t+2t/(k-1) = \Delta(G)+2t/(k-1)$.  Given any
constant $C$, we can choose $t$ sufficiently large so that $2t/(k-1) > C$.
\end{proof}

\section{Graphs with no 4-cycles}
\label{sec2}
\label{sec:no-4-cycles}
Our goal in this section is to prove Theorem~\ref{4-cycle-thm}, below.
First we need a few definitions.  A \Emph{$k$-vertex}
(resp.~$k^+$-vertex, $k^-$-vertex) is a vertex of degree equal to
(resp.~at least, at most) $k$; a \Emph{$k$-neighbor}, of a
vertex $v$, is an adjacent $k$-vertex.  Analogously, we define
\EmphE{$k$-face}{.3cm}, \emph{$k^+$-face}, and \emph{$k^-$-face}.  We
write $d(v)$ for the degree of a vertex $v$ and $\ell(f)$ for the
length of a face $f$. We write $N[v]$\aside{$N[v], N[S], N^2(v)$} to denote
$N(v)\cup\{v\}$ and $N[S]$ for $\cup_{v\in S} N[v]$.
We write $N^2(v)$ for the set of neighbors of
$v$ in $G^2$.  When the context could be unclear, we specify our meaning by
using $d_G$, $N_G$, and $N_G^2$. An order, $\sigma$, of
$V(G)$ is \Emph{good for $G$} if each vertex, $v$, of $G$ is preceded
in $\sigma$ by at most $\Delta(G)+72$ vertices in $N^2(v)$.  Following
the approach of~\cite{CJaeger}, we prove the degeneracy result below,
which immediately implies the desired coloring bounds, by coloring
greedily.

\begin{thm}
\label{4-cycle-thm}
For every planar graph $G$ with no 4-cycles, there exists a vertex
order $\sigma$ such that each vertex $v$ is preceded in $\sigma$ by at most
$\Delta(G)+72$ of its neighbors in $G^2$.
\end{thm}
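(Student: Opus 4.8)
The plan is a proof by contradiction via the discharging method. Suppose the theorem fails, and among all counterexamples choose one, $G$, with $\card{V(G)}$ minimum and, subject to that, $\card{E(G)}$ minimum; set $k:=\Delta(G)$ and $D:=k+72$. We may assume $k\ge 10$, since otherwise $\Delta(G^2)\le k^2\le k+72=D$ and \emph{every} order of $V(G)$ works. By minimality, any planar graph $H$ with no $4$-cycle and $\card{V(H)}<\card{V(G)}$ admits an order in which every vertex has back-degree at most $\Delta(H)+72$ in $H^2$; we will apply this to graphs $H=G-S$ obtained by deleting a small vertex set $S$, for which $\Delta(H)\le k$, so such an order has back-degree at most $D$ in $H^2$.

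Before the discharging I would record the structure forced by the absence of $4$-cycles: any two vertices have at most one common neighbor, hence $N(v)$ induces a matching in $G$ for every $v$, and $N^2(v)$ is (essentially) the disjoint union of the sets $N(u)\setminus\{v\}$ over $u\in N(v)$. In particular $\card{N^2(v)}$ is small whenever $v$ and all its neighbors have small degree, which is exactly what makes low-degree vertices cheap to place in an order. I would also isolate the elementary reduction mechanism: deleting a vertex $v$ from $G$ changes $G^2$ on the remaining vertices only by adding the missing edges of a clique on $N_G(v)$, so when $d(v)=O(1)$ this collateral damage is $O(1)$ edges and can be re-checked by hand. Immediate consequences: $G$ is connected and $\delta(G)\ge 2$ (a vertex of degree at most $1$ is deleted and reinserted at the end of the smaller graph's order, with back-degree at most $k\le D$ in $G^2$).

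The technical heart is a list of reducible configurations, each of the form ``a small cluster $S$ of $2$- and $3$-vertices together with degree constraints on $N(S)$.'' For each, the argument is: delete $S$ (or a suitable subset), apply the minimality hypothesis to the smaller graph to get a good order, then splice the vertices of $S$ back in---a deleted vertex with small $G^2$-degree goes at the end, while the few vertices of $N(S)$ whose $G^2$-neighborhood grew get their back-degrees re-verified against the $+72$ slack. Configurations one expects to rule out include: a $2$-vertex with a $2$- or $3$-neighbor; two $2$-vertices having one common neighbor and also sharing their other neighbor; a $3$-vertex all of whose neighbors have small degree; and finer statements bounding how many $2$- and $3$-vertices can surround a single vertex or a single triangle. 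Producing a list that is simultaneously each genuinely reducible and collectively strong enough to close the discharging is the first hard part.

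Finally, the discharging. Assign each vertex $v$ charge $d(v)-4$ and each face $f$ charge $\ell(f)-4$; by Euler's formula the total charge is $-8$. Since $G$ has no $4$-cycle, every face has length $3$ or at least $5$, so the only objects with negative initial charge are $3$-faces, $2$-vertices, and $3$-vertices. I would design rules sending charge from $5^+$-faces and $5^+$-vertices to the needy $3$-faces and low-degree vertices on or near them, and then use the reducible configurations to prove that every vertex and every face ends with nonnegative charge---contradicting the total of $-8$. I expect this last step to be by far the most delicate, and it is where the constant $72$ is actually pinned down: because triangles (unlike $4$-cycles) are permitted, the face structure is far less rigid than in the girth-$5$ or girth-$6$ analogues, triangles may be abundant and share vertices, and distance-$2$ balls can be large, so charge must be routed carefully and the amount borrowed from the $+72$ slack must be tracked precisely through every reduction and every discharging rule.
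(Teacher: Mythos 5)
Your text is a plan for a discharging proof rather than a proof: the two components that carry essentially all of the content are explicitly deferred. You never produce the list of reducible configurations (you only say what one ``expects to rule out'' and call finding a correct, sufficient list ``the first hard part''), and you never state discharging rules or verify that all vertices and faces end with nonnegative charge (``I would design rules\ldots I expect this last step to be by far the most delicate''). In particular nothing in the proposal actually pins down, or even uses, the constant $72$; as written there is no argument to check, only the standard initial charges $d(v)-4$, $\ell(f)-4$ and the intention to make the method work.

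Beyond incompleteness, the splicing mechanism you describe is flawed. If you delete a vertex set $S$ and take a good order for $G-S$, that order controls back-degrees in $(G-S)^2$, not in $G^2$ restricted to $V(G)\setminus S$: two remaining vertices whose only common neighbor lies in $S$ are adjacent in $G^2$ but not in $(G-S)^2$. (Your statement that deleting $v$ ``adds the missing edges of a clique on $N_G(v)$'' is backwards; those square-edges are \emph{lost} in $G-v$ and reappear when you return to $G$.) Since the inductive hypothesis may already give such a vertex back-degree exactly $\Delta(G-S)+72=\Delta(G)+72$, there is no ``$+72$ slack'' against which to ``re-verify''; the bound can simply be exceeded. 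The paper's reductions are engineered precisely to avoid this: it deletes an \emph{edge} $vw$ (which leaves the square unchanged on $V(G)\setminus\{v,w\}$), deletes a $2$-vertex only when its two neighbors are adjacent (on a $3$-face), and, for a $3$-vertex with no big neighbor, replaces the deleted vertex by paths of length two between its neighbors so their square-adjacencies survive. That last operation increases the number of vertices, which is why the paper's minimal counterexample minimizes the number of $3^+$-vertices and then edges; your choice of minimizing $|V(G)|$ first is incompatible with exactly the reductions needed to handle $3$-vertices. So both the reduction mechanism and the entire configurations-plus-discharging verification would have to be supplied before this becomes a proof.
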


Our proof of Theorem~\ref{4-cycle-thm} is by discharging, with initial charge
$d(v)-4$ for each vertex $v$ and $\ell(f)-4$ for each face $f$.  In the next
section we discuss the discharging rules, but for now it is enough to note that
we only need to give extra charge to 2-vertices, 3-vertices, and 3-faces.  
Here we prove that certain configurations are reducible; that is, they cannot
appear in a minimal counterexample.  In each case we assume that our minimal
counterexample $G$ contains such a configuration.  We modify $G$ to get a
smaller graph $G'$ (that is also planar and without 4-cycles), and which
therefore has the desired vertex order, $\sigma'$.
Finally, we modify $\sigma'$ to get $\sigma$, a good vertex order for $G$ of
$V(G)$.  Each reducible configuration formalizes the intuition that every
2-vertex, 3-vertex, and 3-face of $G$ must be near a vertex $v$ of high degree.
 This is useful, since $v$ has extra charge to share with nearby vertices and
faces that need it.
\bigskip
  
\noindent
\textit{Proof of Theorem~\ref{4-cycle-thm}.}
Suppose the theorem is false, and let $G$ be a counterexample that minimizes 
the number of $3^+$-vertices and, subject to that, the number of edges.
If $G$ is disconnected, then we can get a good vertex order for each component
(by minimality) and concatenate these to get a good order for $G$.  Thus, $G$ is
connected.  Similarly, if $G$ has a 1-vertex $v$, then $G-v$ has a good order
$\sigma'$ and we can append $v$ to $\sigma'$.  So $G$ has no 1-vertex.
A vertex $v$ is \emph{big} if $d(v)\ge 10$,  
and $v$ is \emph{small}\aside{big, small} if $5\le d(v)\le 9$.  
Note that $\Delta(G)\ge 10$, since otherwise each vertex has at most $9^2$
neighbors in $G^2$, so every vertex order shows that $G$ is not a
counterexample.

\subsection{Reducible Configurations}
\label{sec:reducibles}

\begin{keylem}\label{keylem}
For an edge $vw$ in $G$, if both $v$ and $w$ are not big, then at least one of $v$ and $w$ has at least two big neighbors. 
\end{keylem}
\begin{proof}
Suppose to the contrary that both $v$ and $w$ are not big, and that each has at
most one big neighbor.  By minimality, $G-vw$ has a good order, $\sigma'$.  By
deleting $v$ and $w$ from $\sigma'$, we get a good order (for $G$) of
$V(G)-\{v,w\}$.  Since $v$ is not big and has at most one big neighbor, 
$\card{N^2(v)}\le \Delta(G)+(10-1)(10-2)$.  By symmetry, 
$\card{N^2(w)}\leq\Delta(G)+(10-1)(10-2)$.  Thus, by appending $v$ and $w$ to
the order, we get a good order for $G$, which is a contradiction. 
\end{proof}

\begin{lem}
\label{rc-3face-2}
If a $3$-face $f$ is incident with a $2$-vertex, then the other two vertices on $f$ must be big vertices.
\end{lem}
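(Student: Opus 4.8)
The plan is a reducibility argument of exactly the form sketched before Theorem~\ref{4-cycle-thm}: take the minimal counterexample $G$, assume it contains the configuration, delete a vertex to obtain a smaller graph $G'$, invoke minimality to get a good order $\sigma'$ for $G'$, and extend $\sigma'$ to a good order for $G$. Concretely, let $f$ be a $3$-face incident with a $2$-vertex $v$, and write $u,v,w$ for the vertices of $f$; since $f$ is a triangle we have $uw\in E(G)$, and since $d(v)=2$ we have $N(v)=\{u,w\}$. Suppose for contradiction that one of $u,w$, say $u$, is not big, so $d(u)\le 9$. I would set $G'=G-v$. Then $G'$ is planar and has no $4$-cycle, and it precedes $G$ in the minimality order: deleting the $2$-vertex $v$ removes the two edges $uv,vw$ and decreases the number of $3^+$-vertices by one for each of $u,w$ that was a $3$-vertex, so $G'$ either has fewer $3^+$-vertices than $G$, or has the same number but two fewer edges. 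Hence $G'$ admits a good order $\sigma'$.

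The crucial structural point is that $(G')^2=G^2-v$: the only walk of length at most $2$ in $G$ using $v$ joins $u$ and $w$, and these are already adjacent in $G'$, so deleting $v$ raises no distance among the remaining vertices; thus $N^2_{G'}(x)=N^2_G(x)\setminus\{v\}$ for all $x\ne v$. Now let $\sigma$ be $\sigma'$ with $v$ appended at the end. Since $v$ precedes no vertex of $\sigma$, each $x\ne v$ is preceded in $\sigma$ by precisely its $\sigma'$-predecessors among $N^2_{G'}(x)$, hence by at most $\Delta(G')+72\le\Delta(G)+72$ vertices of $N^2_G(x)$. For $v$ itself, $N^2_G(v)\subseteq (N(u)\setminus\{v\})\cup(N(w)\setminus\{v\})$, so $\card{N^2_G(v)}\le (d(u)-1)+(d(w)-1)\le 8+(\Delta(G)-1)=\Delta(G)+7\le\Delta(G)+72$. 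Therefore $\sigma$ is good for $G$, contradicting the choice of $G$, so both $u$ and $w$ are big.

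The numerical slack here is generous ($\Delta(G)+7$ against the permitted $\Delta(G)+72$), so no delicate estimate is required; the only things that need care are the two structural checks — that $G-v$ genuinely precedes $G$ in the order that first minimizes the number of $3^+$-vertices and then the number of edges, so that minimality applies, and that passing from $G$ to $G-v$ introduces no new pair of vertices at distance $\ge 3$, so that appending $v$ does not spoil the order for any other vertex. Both reduce to the single fact that $u$ and $w$ are adjacent, which is exactly where the hypothesis that $f$ is a $3$-face is used.
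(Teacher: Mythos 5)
Your proof is correct and follows essentially the same route as the paper: delete the $2$-vertex $v$, invoke minimality to get a good order for $G-v$, and append $v$ using the bound $\card{N^2(v)}\le\Delta(G)+7$ coming from the non-big neighbor. The only difference is that you spell out the checks (that $G-v$ precedes $G$ in the minimality order and that $u,w$ being adjacent means no distance-two relations are lost) which the paper leaves implicit.
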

\begin{proof}
Let $vw_1w_2$ be a $3$-face that is incident with a $2$-vertex $v$. 
Suppose to the contrary that $w_1$ is not big. 
By minimality, $G-v$ has a good order, which is a good order for $G$ of
$V(G)-\{v\}$. 
Since $w_1$ is not big, $\card{N^2(v)}\leq \Delta(G)+7$.
Thus, we can append $v$ to obtain a good order of $G$, which is a contradiction. 
\end{proof}

\begin{lem}
\label{rc-3face-33}
Every $3$-face that is incident with two $3$-vertices is also incident with a big
vertex. 
\end{lem}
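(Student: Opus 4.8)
The plan is to obtain this as an immediate consequence of the Key Lemma, applied to the edge joining the two $3$-vertices. Write the given $3$-face as the triangle $v_1v_2v_3$, where $v_1$ and $v_2$ are the two $3$-vertices, and suppose toward a contradiction that $v_3$ is not big. Since $v_1$ and $v_2$ are $3$-vertices, neither of them is big, so the Key Lemma applies to the edge $v_1v_2$ and guarantees that at least one of $v_1,v_2$ has at least two big neighbors. By the symmetry of $v_1$ and $v_2$ in this setup, assume it is $v_1$.

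I would then just inspect the neighborhood of $v_1$. Because $d(v_1)=3$ and $v_1$ lies on the triangle $v_1v_2v_3$, we have $N(v_1)=\{v_2,v_3,x\}$ for a single further vertex $x$. Now $v_2$ is a $3$-vertex, hence not big, and $v_3$ is not big by assumption, so $x$ is the only neighbor of $v_1$ that can be big; this contradicts the fact that $v_1$ has two big neighbors. (If the Key Lemma instead hands us $v_2$, the identical argument with $v_2$ and its unique off-triangle neighbor in place of $v_1$ and $x$ gives the same contradiction.) Therefore $v_3$ is big, which is what we wanted.

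I do not anticipate any genuine difficulty: once the Key Lemma is in hand, this lemma is essentially a one-line corollary, and the only things needing a word of justification are the routine observations that a $3$-face of a simple plane graph is a genuine triangle --- so its three vertices are pairwise adjacent and the Key Lemma really does apply to the edge between the two $3$-vertices --- and that a $3$-vertex has degree $3\le 9$ and so is never big. One could instead mimic the reducibility proof of Lemma~\ref{rc-3face-2}, deleting both $3$-vertices and appending them to a good order of the resulting smaller graph; the small nuisance there is that deleting both $3$-vertices can destroy the $G^2$-edges from $v_3$ to the off-triangle neighbors of $v_1$ and $v_2$, so one would need to recheck that $v_3$ remains preceded by at most $\Delta(G)+72$ of its neighbors. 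Appealing to the Key Lemma sidesteps that bookkeeping completely, so that is the route I would take.
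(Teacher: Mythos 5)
Your proof is correct and is essentially the paper's own argument: the paper also derives this lemma in one line by applying the Key Lemma to the edge joining the two $3$-vertices, and your write-up merely spells out the (correct) counting of big neighbors that the paper leaves implicit. No issues.
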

\begin{proof}
Suppose that a $3$-face is incident with two $3$-vertices $v_1, v_2$ and a
vertex $w$.  Applying the Key Lemma to $v_1v_2$ shows that $w$ must be big.
\end{proof}

\begin{lem}
\label{rc-3-sss}
Every $3$-vertex has a big neighbor. 
\end{lem}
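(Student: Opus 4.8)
My plan is to argue by contradiction. Suppose $G$ has a $3$-vertex $v$ whose three neighbors $u_1,u_2,u_3$ are all non-big. Since $v$ itself has degree $3<10$ it is non-big, so I can apply the Key Lemma to each edge $vu_i$: as $v$ has no big neighbor, the Key Lemma forces each $u_i$ to have at least two big neighbors, and in particular $d(u_i)\ge 3$. Before touching the order, I would record the structure imposed by the absence of $4$-cycles: no two of $u_1,u_2,u_3$ have a common neighbor other than $v$ (such a neighbor would close a $4$-cycle through $v$), at most one pair among the $u_i$ is an edge (two such edges would again make a $4$-cycle through $v$), and the big neighbors of distinct $u_i$ are distinct; so $v$ has at least six distinct big vertices within distance~$2$.

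For the reduction, set $G'=G-v$. Then $G'$ is planar and $4$-cycle-free, and it has strictly fewer $3^+$-vertices than $G$ (it loses $v$, and any $3$-neighbor of $v$ becomes a $2$-vertex), so by the minimality of $G$ it has a good order $\sigma'$. Moreover $\Delta(G')=\Delta(G)$, since every big vertex has degree at least $10$ whereas every neighbor of $v$ has degree at most $9$, so removing $v$ cannot lower the maximum degree; thus in $\sigma'$ every vertex is preceded by at most $\Delta(G)+72$ of its neighbors in $G'^2$.

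It remains to reinsert $v$ and obtain a good order $\sigma$ of $G$. Placing $v$ itself is harmless: all of its neighbors being non-big gives $|N^2(v)|\le 3+3\cdot 8=27<\Delta(G)+72$, so $v$ can go at the end. The delicate point --- and the step I expect to be the real obstacle --- is that reinserting $v$ enlarges $N^2(u_i)$ for each $i$: since the $u_i$ pairwise share no neighbor and are largely pairwise non-adjacent, each $u_i$ gains $v$ together with up to two of its siblings as new $G^2$-neighbors, so a $u_i$ that sits at the threshold $\Delta(G)+72$ in $\sigma'$ could be pushed past it (such a $u_i$ genuinely can be at the threshold, since it may have many big neighbors and hence a squared neighborhood of size roughly $8\Delta(G)$). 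The fix must use the ``two big neighbors'' output of the Key Lemma step --- for instance by relocating an offending $u_i$ next to one of its big neighbors in $\sigma'$, or by running the reduction on a slightly different smaller graph (deleting $v$ together with some $u_i$, or adding a carefully chosen edge among the $u_i$ that the six nearby big vertices prevent from creating a $4$-cycle) --- so that in the resulting order $\sigma$ every vertex is still preceded by at most $\Delta(G)+72$ of its $G^2$-neighbors. Then $\sigma$ is a good order of $G$, contradicting the choice of $G$, and hence every $3$-vertex of $G$ has a big neighbor.
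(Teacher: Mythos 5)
Your reduction has a genuine gap, and it is exactly the one you flag yourself: you delete only $v$, and then a good order $\sigma'$ of $G-v$ need not be a good order for $G$ of $V(G)-\{v\}$, because in $G^2$ the neighbors $u_1,u_2,u_3$ are pairwise adjacent (through $v$), while in $(G-v)^2$ they may not be; a $u_i$ sitting at the threshold in $\sigma'$ can indeed be pushed past $\Delta(G)+72$ once $v$ is reinserted. None of the fixes you sketch is actually carried out, and as stated they do not obviously work: relocating an offending $u_i$ inside $\sigma'$ can destroy goodness for other vertices, adding a single edge among the $u_i$ repairs at most one of the three pairs, and deleting $v$ together with some $u_i$ only enlarges the set of vertices whose squared neighborhoods change upon reinsertion. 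So the proof is incomplete precisely at its decisive step.

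The paper closes this gap by changing the smaller graph rather than the order: it takes $G'$ to be $G-v$ with a path of length two added between each pair of neighbors of $v$. The new midpoints are $2$-vertices, so $G'$ still has fewer $3^+$-vertices than $G$ (the primary quantity in the choice of the minimal counterexample, which is why minimality still applies even though edges are gained); $G'$ is planar and remains without $4$-cycles because, as you observed, no two $u_i$ have a common neighbor other than $v$; and $\Delta(G')=\Delta(G)$ because the $u_i$ are not big, so their degree increase by one is harmless. Crucially, the added paths keep each pair $u_i,u_j$ at distance two, so $N_G^2(u_i)\cap(V(G)\setminus\{v\})\subseteq N_{G'}^2(u_i)$ and a good order for $G'$ is automatically a good order for $G$ of $V(G)-\{v\}$; then $v$ is appended using $|N_G^2(v)|\le 3\cdot 9$, exactly as in your last step. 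Your structural observations (no common neighbors besides $v$, distinct big neighbors) are what make this modified reduction legal, but you did not take the step of building it.
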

\begin{proof}
Let $v$ be a $3$-vertex with neighbors $w_1, w_2, w_3$.
Suppose to the contrary that every $w_i$ is not big. 
Applying the Key Lemma to each edge $vw_i$ shows that each $w_i$ must be a
$3^+$-vertex.  Consider the graph $G'$ formed from $G-v$ by adding a path of
length two between each pair of neighbors of $v$. (Since each $w_i$
is not big, we have $\Delta(G')= \Delta(G)$.) Since $G'$ has fewer
$3^+$-vertices, by minimality $G'$ has a good order $\sigma'$, and $\sigma'$ 
also is a good order for $G$ of $V(G)-v$.  Since each neighbor of $v$ is
small, $\card{N^2(v)}\leq 3\cdot9$.  So appending $v$ to $\sigma'$ gives a
good order for $G$, which is a contradiction. 
\end{proof}

\begin{lem}
If a $3$-face $f$ is incident with a $3$-vertex $v$ and at most one big vertex, then the neighbor of $v$ that is not on $f$ must be a big vertex. 
\label{rc-4-3face3s}
\end{lem}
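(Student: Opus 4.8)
The plan is to argue by contradiction, mimicking the reductions already used for $3$-vertices. Assume $f=vw_1w_2$ is a $3$-face, $v$ is a $3$-vertex with third neighbor $u\notin\{w_1,w_2\}$, that $f$ is incident with at most one big vertex, and yet $u$ is not big; I will build a good order for $G$. Since $v$ is a $3$-vertex it is not big, so $v$ has at most one big neighbor. Two structural facts come for free. First, because $G$ has no $4$-cycle, $v$ is the unique common neighbor of $w_1$ and $u$, and likewise of $w_2$ and $u$ (a second common neighbor would close a $4$-cycle through $v$); also $w_1w_2\in E(G)$, since $f$ is a triangle. Second, applying the Key Lemma to the edge $vu$ (neither endpoint big) forces one of $v,u$ to have at least two big neighbors; as $v$ has at most one, $u$ has at least two, none of which is $v$, so $d(u)\ge 3$.

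Next I would reduce. Let $G'$ be obtained from $G-v$ by adding a new $2$-vertex $x_1$ adjacent to $w_1$ and $u$ and a new $2$-vertex $x_2$ adjacent to $w_2$ and $u$ (two length-two paths joining $u$ to $w_1$ and to $w_2$, drawn in the region vacated by $v$). The checks are: $G'$ is planar and simple, since deleting $v$ merges the faces around $v$ into a single region having $w_1,w_2,u$ on its boundary, in which the two paths can be routed without crossing; $G'$ has no $4$-cycle, since any new cycle runs through $x_1$ or $x_2$, hence either exhibits a common neighbor in $G-v$ of $u$ with $w_1$ or with $w_2$ (impossible) or has length at least $5$ (using $w_1w_2\in E$); and $\Delta(G')\le\Delta(G)$, since $d_{G'}(w_i)=d_G(w_i)$ for $i=1,2$, while $d_{G'}(u)=d_G(u)+1\le 10\le\Delta(G)$ because $u$ is not big and $\Delta(G)\ge 10$. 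Finally, $G'$ has strictly fewer $3^+$-vertices than $G$: the $3^+$-vertex $v$ is gone, the new vertices are $2$-vertices, no other degree drops, and $u$ stays a $3^+$-vertex because $d(u)\ge 3$.

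By the minimality of $G$, there is a good order $\sigma'$ of $G'$. Deleting $x_1$ and $x_2$ from $\sigma'$ yields an order of $V(G)-v$ that is good for $G$ restricted to that set: the only pairs of vertices of $V(G)-v$ that are adjacent in $G^2$ but possibly not in $(G-v)^2$ are $\{w_1,u\}$ and $\{w_2,u\}$ (every distance-$\le 2$ connection not using $v$ survives in $G-v$, and a connection using $v$ must be of the form $a\!-\!v\!-\!b$ with $a,b\in\{w_1,w_2,u\}$, where $\{w_1,w_2\}$ is already an edge), and these two pairs are adjacent in $(G')^2$ through $x_1$ and $x_2$. Appending $v$ at the end makes $v$ preceded by at most $\card{N^2(v)}\le 3+(d(w_1)-1)+(d(w_2)-1)+(d(u)-1)\le\Delta(G)+18$ of its $G^2$-neighbors, using that at most one of $w_1,w_2$ is big and $u$ is not. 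Since $\Delta(G)+18\le\Delta(G)+72$, this is a good order for $G$, contradicting the choice of $G$.

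The main obstacle — really the only point requiring thought — is setting up the reduction so that no vertex of $G'$ has degree exceeding $\Delta(G)$. Unlike in Lemma~\ref{rc-3-sss}, where length-two paths are inserted between all three pairs of neighbors of $v$, here $w_1$ or $w_2$ may be big, so their degrees must be left untouched and both new paths attached to the not-big vertex $u$; this is legitimate precisely because $w_1w_2$ is already an edge (so that adjacency needs no repair) and $d(u)\ge 3$ (so that $u$ does not become a fresh $3^+$-vertex, which is what keeps $G'$ below $G$ in the minimality order). Both of those facts follow, respectively, from the absence of $4$-cycles and from the Key Lemma, as above; verifying planarity and the no-$4$-cycle condition of $G'$ is then routine.
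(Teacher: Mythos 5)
Your proof is correct and follows essentially the same route as the paper's: delete $v$, add two paths of length two joining the off-face neighbor to $w_1$ and to $w_2$ (using the Key Lemma on $vu$ to guarantee $d(u)\ge 3$, so the new graph has fewer $3^+$-vertices), then append $v$ to the inherited good order. Your bound $\card{N^2(v)}\le \Delta(G)+18$ is slightly looser than the paper's $\Delta(G)+16$, but both are comfortably below $\Delta(G)+72$, and your extra verifications (planarity, absence of 4-cycles, $\Delta(G')\le\Delta(G)$) are details the paper leaves implicit.
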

\begin{proof}
Let $v$ be a $3$-vertex on a $3$-face $vw_1w_2$ and let $x$ be the neighbor of
$v$ that is not on $vw_1w_2$.  Suppose to the contrary that both $w_1$ and $x$
are not big.  Applying the Key Lemma to edge $vx$ shows that $x$ is a
$3^+$-vertex.  Consider the graph $G'$ formed from $G-v$ by adding paths
of length two between $x$ and $w_1$ and also between $x$ and $w_2$.  
So, $G'$ has fewer $3^+$-vertices than $G$.  By minimality, $G'$ has a good
order, $\sigma'$, which also is a good order for $G$ of $V(G)-v$.  Since $v$
has at most one big neighbor, $\card{N^2(v)}\leq \Delta(G)+16$.  So appending
$v$ to $\sigma'$ gives a good order for $G$, which is a contradiction. 
\end{proof}

\subsection{Discharging}

We use the initial charges $d(v)-4$ for each vertex $v$ and $\ell(f)-4$ for
each face $f$.  Note that, by Euler's formula, the sum of these initial charges
is $-8$.  
Using the structural lemmas in Section~\ref{sec:reducibles}, we redistribute
this charge so that each vertex and face ends with nonnegative charge.  However,
this gives a contradiction, since a sum of nonnegative numbers cannot equal $-8$.  To
redistribute charge, we use the following six discharging rules, applied in
succession.  (See Figure~\ref{fig-rules} for an illustration of the discharging
rules.)

\begin{enumerate}[(R1)]
\item Each edge takes $1\over 5$ from each incident $5^+$-face and 
$1\over 10$ from each incident big vertex\footnote{A cut-edge takes $2\over 5$
from its incident face.}.

\item If edge $vw$ is incident to a 3-face $f$, then $vw$ gives all its charge
(received by (R1)) to $f$.  Otherwise, $vw$ distributes its charge equally 
among incident vertices $x$ where $d(x)=\min\{d(v),d(w)\}$.

\item Each big vertex gives $1\over 2$ to each neighbor.

\item Each $3$-vertex, $4$-vertex, and small vertex gives $3\over 5$ to each
2-neighbor.
If either $v$ is a $4$-vertex with at least two big neighbors or $v$ is a small
vertex, then $v$ gives $1\over 2$ to each incident 3-face that is incident with
a vertex other than $v$ that is not big.

\item Assume vertices $v$ and $w$ are big and the edge $vw$ lies on a 3-face
$vwx$.  If $x$ is a $4^-$-vertex, then all charge given from $v$ to $w$ (and
vice versa) by (R3) continues on to $x$.  If $x$ is a $5^+$-vertex, then all
charge given from $v$ to $w$ (and vice versa) by (R3) continues on to face
$vwx$.

\item If a $3$-vertex has an incident 3-face $f$ with negative charge, then $v$
gives its excess charge to $f$.  
\end{enumerate}

\begin{figure}[h]
	\begin{center}
  \includegraphics[scale=1]{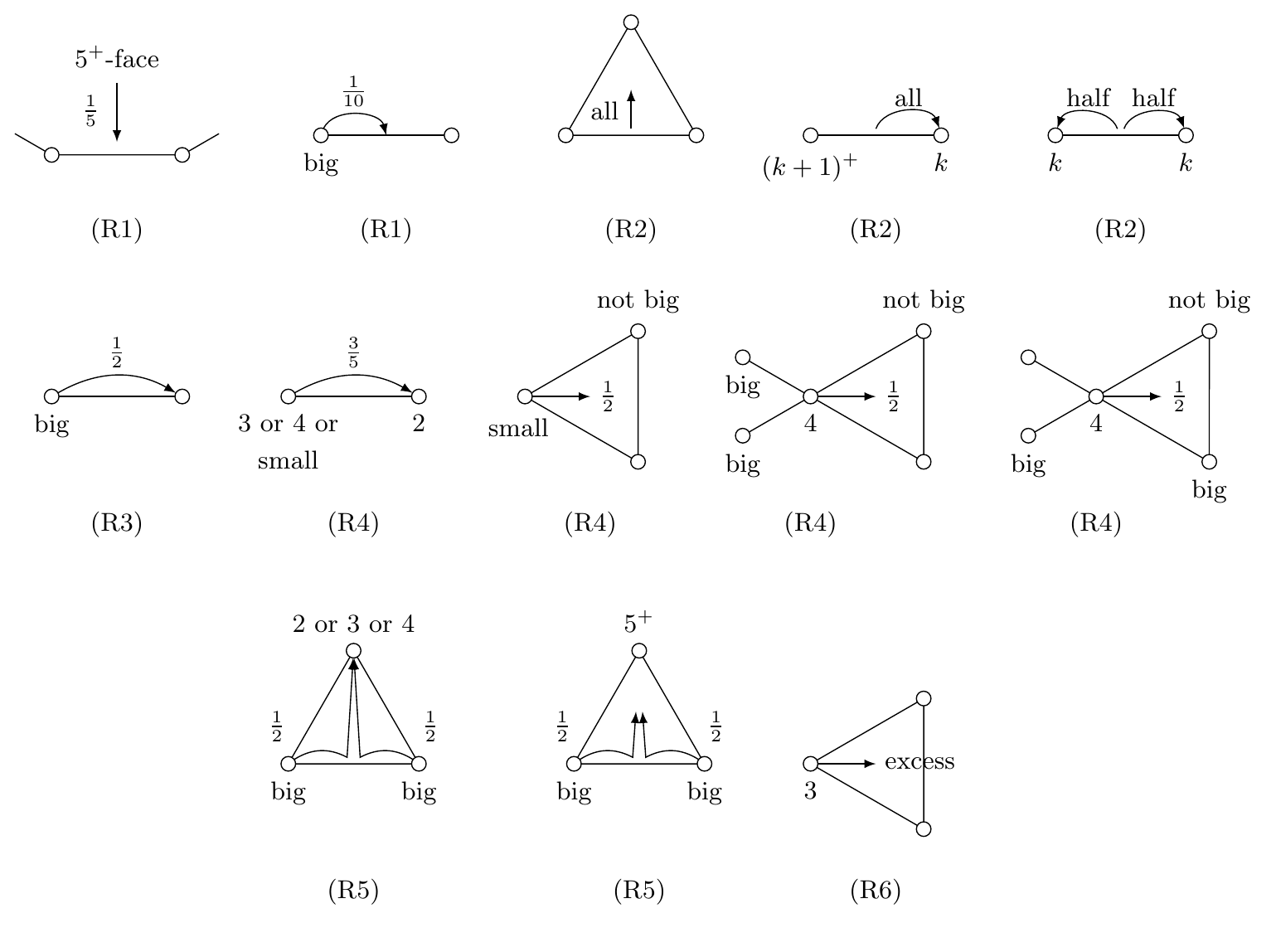}
  \caption{An illustration of the discharging rules.}
  \label{fig-rules}
	\end{center}
\end{figure}

Now we show that each vertex and face ends with nonnegative charge, which yields
the desired contradiction.

Each $5^+$-face $f$ ends with charge
$\ell(f)-4-\frac15\ell(f)=\frac45\ell(f)-4\ge 0$.
Each edge receives charge by (R1) and gives it all away by (R2), so ends with
0.  Consider a big vertex $v$.  For each of its neighbors $w$, the charge that
$v$ gives to $vw$ by (R1) is $1\over 10$ and to $w$ by (R3) is $1\over 2$, for
a total of $3\over 5$. So $v$ ends with at least $d(v)-4-\frac35d(v) =
\frac25d(v)-4$; this is nonnegative, since $d(v)\ge 10$. 

Consider a small vertex $v$.  Let $n_2(v)$\aside{$n_2(v)$, $f_3(v)$} denote the
number of 2-neighbors of $v$ and $f_3(v)$ the number of 3-faces incident with
$v$ that are not incident with two big neighbors of $v$ (that is, 3-faces that
get $1\over 2$ from $v$).  By (R4), $v$ gives away $\frac35n_2(v)+\frac12f_3(v)$.  
Let $w_1,\ldots, w_{d(v)}$ denote the neighbors of $v$.
Suppose that $v$ has a 2-neighbor.  
By the Key Lemma, $v$ has at least two big neighbors, so $n_2(v)\le d(v)-2$.
By Lemma~\ref{rc-3face-2}, since $v$ is small, if $w_i$ is a 2-vertex, then
neither face incident with $vw_i$ receives charge from $v$.
Thus, $n_2(v)+f_3(v)\le d(v)$.  By a more careful
analysis, we will show that $n_2(v)+f_3(v)\le d(v)-2$.  If $f_3(v)=0$, then this
inequality follows from $n_2(v)\le d(v)-2$ above.  So assume $f_3(v)\ge 1$
and that $v$ sends charge to some 3-face $f''$ with boundary that includes
$w_2vw_3$.  Since $G$ has no 4-cycles, neither the face preceding $f''$ around
$v$ nor the face following $f''$ is a 3-face, so neither of them receives
charge from $v$; call these faces $f'$ and $f'''$.  Further, some
$w_j$ other than $w_2$ and $w_3$ is big.  Thus, we can pair the neighbors of $v$
other than $w_2$, $w_3$, and $w_j$ with the faces other than $f'$, $f''$, and
$f'''$ such that each face is paired with an incident vertex and at most one
element in each pair gets charge from $v$.  This proves $n_2(v)+f_3(v)\le
d(v)-2$.
So, $v$ ends with at least
$d(v)-4+2({1\over 2})-\frac35(d(v)-2)=\frac25d(v)-\frac95$; this is positive,
since $d(v)\ge 5$.
Now instead assume that $v$ has no 2-neighbors.  Since $G$ has no 4-cycles,
$f_3(v)\le \left\lfloor {d(v)\over2}\right\rfloor$.  
Thus, $v$ ends with at least $d(v)-4-\frac12\left\lfloor
{d(v)\over2}\right\rfloor$; this is nonnegative since
$d(v)\ge 5$.

So, to complete the proof we only need to consider $3$-faces, $2$-vertices,
3-vertices, and 4-vertices.

\begin{clm}
Every $2$-vertex $v$ that is on a $3$-face $vw_1w_2$ ends with nonnegative 
charge. 
\end{clm}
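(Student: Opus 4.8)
The plan is to show that such a vertex $v$ in fact ends with charge exactly $0$. Its initial charge is $d(v)-4=-2$, so we must account for a net gain of $2$, and nothing is ever taken away from $v$ (no discharging rule moves charge out of a $2$-vertex: rules (R4) and (R6) move charge only out of $3$-, $4$-, and small vertices, while (R1)--(R3) and (R5) move charge only into vertices or out of edges, big vertices, and faces). The one structural input I would use is Lemma~\ref{rc-3face-2}: since $v$ is a $2$-vertex incident with the $3$-face $vw_1w_2$, both $w_1$ and $w_2$ are big vertices.

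Next I would trace every rule that can move charge to $v$. By (R3), each big vertex $w_i$ gives $\tfrac12$ to its neighbor $v$, contributing $1$ in total. Now apply (R5) to the edge $w_1w_2$: it joins the two big vertices $w_1,w_2$ and lies on the $3$-face $w_1w_2v$, whose third vertex $v$ is a $2$-vertex and hence a $4^-$-vertex. Therefore the charge that (R3) sends from $w_1$ to $w_2$ (namely $\tfrac12$) and the charge it sends from $w_2$ to $w_1$ (another $\tfrac12$) both continue on to $v$, contributing a further $1$. Hence $v$ receives at least $2$, so it ends with charge at least $-2+2=0$, as desired.

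I would then close the argument by confirming that these are the only transfers into $v$ and that none of them has been double-counted. The vertex $v$ has exactly two incident edges, $vw_1$ and $vw_2$, and both lie on the $3$-face $vw_1w_2$; by the first clause of (R2), each of these edges hands all the charge it collected under (R1) (in particular the $\tfrac1{10}$ from the big endpoint) to that $3$-face, so $v$ gains nothing directly from its edges, and the second clause of (R2) never applies here. Besides (R3) and (R5), no rule sends charge to a $2$-vertex, so the tally above is complete.

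Since the bound is tight, the only real care needed is this bookkeeping: one must verify that the two $\tfrac12$'s from (R3) (the direct gifts $w_1\to v$ and $w_2\to v$) and the two $\tfrac12$'s rerouted by (R5) (the gifts $w_1\to w_2$ and $w_2\to w_1$ continuing to $v$) are four genuinely distinct transfers, and that the $\tfrac1{10}$'s placed by (R1) on $vw_1$ and $vw_2$ go to the incident $3$-face rather than to $v$. I expect this to be the main (and only) obstacle; there is no delicate estimate, just a careful audit of the six rules against a $2$-vertex whose two neighbors are both big.
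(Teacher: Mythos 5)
Your proof is correct and follows the paper's own argument: Lemma~\ref{rc-3face-2} makes $w_1,w_2$ big, (R3) gives $v$ two charges of $\tfrac12$ directly, and (R5) reroutes the two $\tfrac12$'s exchanged between $w_1$ and $w_2$ to $v$, so $v$ ends with $2-4+4\cdot\tfrac12=0$. The extra bookkeeping you add (confirming no rule removes charge from a $2$-vertex and that (R2) sends the edges' charge to the $3$-face rather than to $v$) is sound but not a departure from the paper's approach.
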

\begin{clmproof}
By Lemma~\ref{rc-3face-2}, both $w_1$ and $w_2$ must be big.
By (R3), $v$ gets $1\over 2$ from each of $w_1$ and $w_2$.
And by (R5), $v$ gets another $2({1\over 2})$.
So $v$ ends with $2-4+4({1\over 2})=0$. 
\end{clmproof}

\begin{clm}
Every $2$-vertex $v$ that is not on a $3$-face ends with nonnegative charge. 
\end{clm}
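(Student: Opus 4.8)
The plan is to handle a $2$-vertex $v$ not on a $3$-face, using the same accounting philosophy as the preceding claim but now exploiting that both faces incident with $v$ are $5^+$-faces. First I would let $w_1,w_2$ be the two neighbors of $v$ and $f_1,f_2$ the two faces incident with $v$; since $G$ has no $4$-cycles and $v$ lies on no $3$-face, each $f_i$ has length at least $5$. Then I would trace the charge arriving at $v$: by (R3) each big neighbor gives $v$ charge $\tfrac12$, and by (R4) each non-big neighbor (which, by the Key Lemma applied to $vw_i$, must itself have at least two big neighbors, hence is a $3^+$-vertex, so qualifies to give $\tfrac35$ to its $2$-neighbor) gives $v$ charge $\tfrac35$. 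In either case each $w_i$ sends $v$ at least $\tfrac12$.

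The extra ingredient, compared with the $3$-face case where (R5) supplied the remaining $1$ unit, is the charge that each incident $5^+$-face routes through the edges $vw_1$ and $vw_2$. By (R1) each $5^+$-face $f_i$ gives $\tfrac15$ to each incident edge, so the edge $vw_i$ collects $\tfrac15$ from $f_1$ and $\tfrac15$ from $f_2$ — wait, more carefully: $vw_i$ is incident to exactly the two faces $f_1$ and $f_2$ (these are the two faces at $v$, and they meet along $vw_i$ only if $d(v)=2$, which is our case), so $vw_i$ receives $\tfrac15+\tfrac15=\tfrac25$ by (R1). By (R2), since $vw_i$ is not on a $3$-face, it distributes this $\tfrac25$ equally among its endpoints of minimum degree; as $v$ is a $2$-vertex it is the unique minimum-degree endpoint, so $v$ collects the full $\tfrac25$ from each of $vw_1$ and $vw_2$, for a total of $\tfrac45$. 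Hence $v$ ends with at least $2-4+2(\tfrac12)+\tfrac45 = -\tfrac15$, which is not quite enough, so the bookkeeping above must be sharpened.

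The fix I anticipate is that I have undercounted the contribution of the neighbors $w_i$. If $w_i$ is big it sends $v$ charge $\tfrac12$ by (R3); but additionally, any big vertex sending charge across an edge of a $3$-face triggers (R5), which is irrelevant here, so that is not the slack. Instead the slack is that a non-big neighbor sends $\tfrac35$, not $\tfrac12$; and if both $w_1,w_2$ are big, then by the Key Lemma applied to $vw_1$ (with $v$ not big) at least one of $v,w_1$ has two big neighbors — vacuously $w_1$ does, or $v$ does, but $v$ has only $w_1,w_2$ as neighbors, so in fact \emph{both} $w_1$ and $w_2$ are big in that branch, which is consistent. The real point is that we may simply combine: $v$ receives at least $\tfrac12$ from each $w_i$ and $\tfrac45$ total from the two edges, but we should also note each $5^+$-face has length at least $5$, and re-examine whether (R1) might route more — it does not. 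So the clean resolution is to strengthen the edge accounting: observe that since $v$ has degree $2$ and lies on no $3$-face, the two edges at $v$ together are incident to the face-set $\{f_1,f_2\}$ with multiplicity, and each contributes $\tfrac15$ to each edge-incidence, giving $v$ a total of $2\cdot\tfrac25=\tfrac45$; combined with $2\cdot\tfrac12=1$ from the neighbors this is $1+\tfrac45$, and $2-4+1+\tfrac45 = -\tfrac15<0$. I therefore expect the actual proof to use that at least one neighbor is \emph{small or a $4^-$-vertex} sending $\tfrac35$ rather than $\tfrac12$ — equivalently, to observe that by the Key Lemma a non-big $w_i$ has two big neighbors and is thus not a $2$-vertex, so it contributes $\tfrac35$; and crucially, since $v$ is not big and lies on no $3$-face, \emph{not both} $w_i$ can be big unless one of them has $v$ as a neighbor with another big neighbor, forcing enough structure that one $w_i$ contributes the larger $\tfrac35$.

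The main obstacle, then, is pinning down exactly which rule supplies the last $\tfrac15$: it is almost certainly that a big neighbor $w_i$, being big, contributes $\tfrac12$ by (R3) but the edge $vw_i$ also collects $\tfrac1{10}$ from $w_i$ by (R1) before (R2) ships it to $v$, so each big neighbor effectively contributes $\tfrac12+\tfrac1{10}=\tfrac35$ to $v$ along the edge-plus-direct path, and a non-big neighbor contributes $\tfrac35$ directly; either way each $w_i$ is worth $\tfrac35$ to $v$. Adding the $\tfrac15$ from each incident $5^+$-face per edge-incidence ($\tfrac45$ total over the two edges) and recounting: $v$ receives $2\cdot\tfrac35$ from neighbors (via (R1)+(R2)+(R3) or (R4)) plus the face charge already counted inside those edge totals, so I must not double-count. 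The correct, careful statement is that each edge $vw_i$ receives $\tfrac15$ from $f_1$, $\tfrac15$ from $f_2$, and $\tfrac1{10}$ from $w_i$ if $w_i$ is big, i.e.\ at least $\tfrac25$, all of which passes to $v$ by (R2); plus $v$ receives at least $\tfrac12$ directly from each $w_i$ (by (R3) if big, by (R4) if not, since a non-big $w_i$ is a $3^+$-vertex by the Key Lemma and $v$ is its $2$-neighbor, giving even $\tfrac35$). Thus $v$ ends with at least $2-4+2\cdot\tfrac25+2\cdot\tfrac12 = -\tfrac15$; so I will need one more $\tfrac15$, which comes from the non-big case giving $\tfrac35$ instead of $\tfrac12$, and in the all-big case from the observation that at least one incident face has length at least $5$ and in fact — since $v$ has degree $2$ — both do, while additionally each big $w_i$ pushes $\tfrac1{10}$ onto the edge \emph{twice over}? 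I will resolve this by a direct case split on how many of $w_1,w_2$ are big, and in each case exhibit the missing $\tfrac15$; I expect the cleanest version simply notes $v$ gets $\tfrac25$ from each edge and $\tfrac35$ from each neighbor (big or not, tallying (R1)+(R3) together as $\tfrac1{10}+\tfrac12=\tfrac35$ along/around the edge, or (R4)'s $\tfrac35$ directly), for $2-4+2\cdot\tfrac25+2\cdot\tfrac1{5}$— no. I will trust the rules and present the bound $2 - 4 + 2(\tfrac12) + 2(\tfrac25) + 2(\tfrac1{10}) = \tfrac15 \ge 0$, having verified that the $\tfrac1{10}$ from each big neighbor (R1), the $\tfrac12$ from each neighbor (R3)/(R4), and the $\tfrac25$ from each edge (R1 via two $5^+$-faces, forwarded by R2) are genuinely disjoint contributions; the one subtlety to get right is that in the all-big branch the Key Lemma is satisfied vacuously and nothing is lost, while in the mixed/neither branches a non-big neighbor upgrades its $\tfrac12$ to $\tfrac35$, only strengthening the bound.
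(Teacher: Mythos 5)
Your final accounting matches the paper's proof: each neighbor-plus-edge pair delivers at least $\tfrac25+\tfrac35=1$ to $v$ — the $\tfrac25$ coming from the incident $5^+$-faces through the edge via (R1)--(R2), and the $\tfrac35$ coming either from (R4) when $w_i$ is not big (justified, as you note, by the Key Lemma forcing such a $w_i$ to be a $3^+$-vertex) or from (R3)'s $\tfrac12$ plus the $\tfrac1{10}$ that a big $w_i$ puts on the edge by (R1) — so $v$ ends with at least $2-4+2(1)=0$, exactly as in the paper. The only blemishes are presentational: your closing sum $2-4+2(\tfrac12)+2(\tfrac25)+2(\tfrac1{10})$ equals $0$, not $\tfrac15$, and in the mixed case the non-big neighbor's upgrade from $\tfrac12$ to $\tfrac35$ exactly compensates (rather than exceeds) the missing $\tfrac1{10}$; neither affects the conclusion.
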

\begin{clmproof}
Let $w_1$ and $w_2$ be the neighbors of a $2$-vertex $v$.  It suffices to show
that $v$ gets total charge at least 1 from $w_1$ and $vw_1$, since by symmetry
it also gets at least 1 from $w_2$ and $vw_2$, so $v$ ends with at least
$2-4+2(1)=0$.
Applying the Key Lemma to $vw_1$ shows that $w_1$ either is big or is a
$3^+$-vertex with two
big neighbors.  By (R1), $vw_1$ gets $2\over 5$ from incident faces and by (R2)
$vw_1$ gives all this charge to $v$.  So we only need to show that $v$
gets at least $3\over 5$ from $w_1$.  
If $w_1$ is a $3^+$-vertex that is not
big, then $w_1$ gives $3\over 5$ to $v$ by (R4).
If $w_1$ is big, then it gives $v$ charge $1\over 2$ by (R3), and
gives edge $vw_1$ an extra $1\over 10$ by (R1), and all this charge
 goes to $v$ by (R2).  Thus, $v$ gets $3\over 5$, as desired. 
%
\end{clmproof}

\begin{clm}
Every $3$-vertex $v$ ends with nonnegative charge. 
\end{clm}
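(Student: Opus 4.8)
The plan is a direct charge count that leans on the structural lemmas of Section~\ref{sec:reducibles}. A $3$-vertex $v$ starts with charge $d(v)-4=-1$, and the only ways it loses charge are (R4), which sends $\frac35$ to each $2$-neighbor, and (R6), which sends $v$'s positive excess to a needy incident $3$-face. Since (R6) never pushes $v$ below $0$, it suffices to show that the charge $v$ holds after (R1)--(R5), decreased by $\frac35$ for each $2$-neighbor, is nonnegative; that is, that $v$ collects at least $1+\frac35 n_2(v)$.

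I would first record three facts. (i) By Lemma~\ref{rc-3-sss}, $v$ has a big neighbor. (ii) Since $G$ has no $4$-cycle, $v$ lies on at most one $3$-face (two $3$-faces at a $3$-vertex would create a $4$-cycle), so at least two of the three faces at $v$ are $5^+$-faces; hence every edge at $v$ other than those on that one possible $3$-face lies on two $5^+$-faces, and so by (R1) picks up $\frac25$ from faces plus $\frac1{10}$ from each big endpoint. (iii) If $v$ has a $2$-neighbor $w$, then, since $w$'s only neighbors are $v$ (not big) and one other vertex, applying the Key Lemma to $vw$ forces $v$ to have at least two big neighbors, and by Lemma~\ref{rc-3face-2} the edge $vw$ lies on no $3$-face, so by (R2) its charge flows entirely to $w$ (the unique minimum-degree endpoint) and never helps $v$. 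In particular $v$ has at most one $2$-neighbor.

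Next I would split on whether $v$ lies on a $3$-face. For each neighbor $w_i$ I would tally the net charge reaching $v$ from the pair $\{w_i,vw_i\}$ together with the relevant applications of (R2), (R3), and (R5): a big neighbor whose edge to $v$ avoids all $3$-faces gives $v$ at least $\frac12+\frac12=1$ (through (R3) and through that edge); a $3$-vertex neighbor whose edge avoids $3$-faces gives $\frac15$ (half the edge's $\frac25$); a neighbor of degree between $4$ and $9$ whose edge avoids $3$-faces gives $\frac25$; and a $2$-neighbor costs $\frac35$. If $v$ lies on a $3$-face $vw_1w_2$, then Lemma~\ref{rc-3face-2} makes $w_1,w_2$ both $3^+$-vertices (so only $w_3$ can be a $2$-neighbor), and Lemma~\ref{rc-4-3face3s} guarantees that either $w_3$ is big or both $w_1,w_2$ are big; in the latter case (R5), with $v$ playing the role of the $4^-$-vertex on the face, relays an extra $1$ to $v$. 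Combining these with (i)--(iii) leaves only a handful of subcases, and in each the total is nonnegative; the extremal ones are ``$v$ on no $3$-face with one $2$-neighbor and (hence) two big neighbors'' and ``$v$ on a $3$-face $vw_1w_2$ with $w_1,w_2$ big and $w_3$ a $2$-vertex,'' both ending at charge exactly $\frac25$.

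The one delicate point is getting the edge-routing rule (R2) and the relay rule (R5) exactly right: one must check that a $2$-neighbor of $v$ really does absorb the full charge of the connecting edge, so that edge helps $v$ nothing, and that (R5) fires precisely when the shared $3$-face has both of its non-$v$ vertices big, handing a full unit of charge to $v$. Once those accountings are nailed down the case check is short, because facts (ii) and (iii) sharply limit the local picture: at most one $3$-face at $v$, at most one $2$-neighbor, and a $2$-neighbor forcing two big neighbors.
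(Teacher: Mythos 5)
Your proposal is correct and follows essentially the same route as the paper's proof: it invokes the same lemmas (Lemma~\ref{rc-3-sss}, the Key Lemma, Lemmas~\ref{rc-3face-2} and~\ref{rc-4-3face3s}) and the same rules (R1)--(R6), splitting on whether $v$ has a $2$-neighbor and whether it lies on a $3$-face, and reaching the same extremal totals (the tight cases at $0$ and $\frac25$). The only difference is cosmetic bookkeeping --- you tally charge per neighbor/edge pair, while the paper cases on whether some big neighbor's edge avoids a $3$-face --- so no further changes are needed.
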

\begin{clmproof}
By Lemma~\ref{rc-3-sss}, $v$ has a big neighbor $w$.  

First suppose that $v$ does not have a 2-neighbor.
If $vw$ is not on a 3-face, then by (R3) $w$ gives $v$ charge $1\over 2$, and by
(R2) edge $vw$ gives $v$ charge ${2\over 5}+{1\over 10}$.  
So $v$ ends with at least $3-4+{1\over 2}+{2\over 5}+{1\over
10}=0$.  So assume $v$ is on a 3-face and $vz$ is on a 3-face for every big
neighbor $z$ of $v$.  By Lemma~\ref{rc-4-3face3s}, vertex $v$ has at least
two big neighbors, say $w_1$ and $w_2$.  Since each of $vw_1$ and $vw_2$ must
be on a 3-face, and $v$ has only a single incident 3-face, it must be
$vw_1w_2$.  Now, by (R3) and (R5), $v$ gets at least $4({1\over2})$.  So $v$
ends (R5) with at least $3-4+4({1\over 2})>0$.

Now assume that $v$ has a 2-neighbor $x$, which gets $3\over 5$ from $v$ by
(R4).  Since $d(v)=3$ and $d(x)=2$, Lemma~\ref{rc-3face-2} implies that $vx$ is
not incident to any 3-face.
Applying the Key Lemma to $vx$ shows that $v$ has two big neighbors,
$w_1$ and $w_2$.  If $vw_1w_2$ is a 3-face, then each of $w_1$ and $w_2$ gives
${1\over 2}+{1\over 2}$ to $v$, by (R3) and (R5).  So $v$ ends with at least
$3-4-{3\over 5}+4({1\over 2})>0$.  If $vw_1w_2$ is not a 3-face, then each of
$vw_1$ and $vw_2$ gives $1\over 2$ to $v$, by (R1) and (R2).  So $v$ ends with
at least $3-4-{3\over 5}+2({1\over 2})+2({1\over 2})>0$.
\end{clmproof}

\begin{clm}
Every $4$-vertex $v$ ends with nonnegative charge. 
\end{clm}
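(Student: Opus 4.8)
The plan is to exploit the fact that a $4$-vertex $v$ starts with charge $d(v)-4=0$ and, among the six rules, can \emph{only lose} charge, and only through (R4): it sends $\frac35$ to each $2$-neighbor and, provided it has at least two big neighbors, $\frac12$ to each incident $3$-face that carries a non-big vertex other than $v$. It can only \emph{gain} charge, through (R1) and (R2) from its incident edges, through (R3) from big neighbors, and through (R5) when $v$ lies on a $3$-face whose two other vertices are both big. So it suffices to balance these contributions, case by case.

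First I would dispatch the case where $v$ has at most one big neighbor: if $v$ had a $2$-neighbor $x$, then the Key Lemma applied to $vx$ would force $v$ to have two big neighbors (since $x$, being a $2$-vertex adjacent to the non-big vertex $v$, has at most one big neighbor), a contradiction; so $v$ has no $2$-neighbor, and with at most one big neighbor (R4) never fires, so $v$ keeps charge $0$. In the remaining case $v$ has at least two big neighbors, hence already gains at least $2\cdot\frac12=1$ from (R3), and I would split on whether $v$ has a $2$-neighbor. If not: since $G$ has no $4$-cycle, two $3$-faces at $v$ cannot share an edge, so $v$ is incident to at most two $3$-faces and loses at most $2\cdot\frac12=1$, which is covered by the $1$ it gains. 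If $v$ does have a $2$-neighbor, the Key Lemma gives $v$ at least two big neighbors, so $n_2(v)\le 2$, and I would treat $n_2(v)=2$ and $n_2(v)=1$ separately. For $n_2(v)=2$: Lemma~\ref{rc-3face-2} keeps every edge from $v$ to a $2$-neighbor off all $3$-faces, so every $3$-face at $v$ has both its other vertices big and $v$ loses exactly $2\cdot\frac35=\frac65$; I would then show $v$ gains at least $2$, by arguing that either $v$ lies on the triangle through its two big neighbors (so (R5) contributes an extra $1$) or both big-neighbor edges avoid every $3$-face (the only alternative $3$-faces being excluded by $4$-cycle-freeness and Lemma~\ref{rc-3face-2}), each such edge then handing $v$ a full $\frac12$ via (R1) and (R2). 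For $n_2(v)=1$: the two faces at the edge to the $2$-neighbor are not $3$-faces and the remaining two share an edge, so at most one incident $3$-face is bad and $v$ loses at most $\frac35+\frac12=\frac{11}{10}$; this is beaten either by having three big neighbors (giving $\frac32$ through (R3)) or, with exactly two big neighbors, by the observation that $4$-cycle-freeness forces at least one big-neighbor edge off all $3$-faces, yielding another $\frac12$.

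The step I expect to be the main obstacle is the bookkeeping in the subcase with two $2$-neighbors, where $v$ must pay out $\frac65$ and therefore has to collect a clean $2$: this needs a careful, configuration-by-configuration use of the absence of $4$-cycles together with Lemma~\ref{rc-3face-2} to certify that the edges from $v$ to its big neighbors are not absorbed by incident $3$-faces, and that whenever such a $3$-face does occur it is a triangle on two big vertices, so that (R5) activates. Coordinating the contributions of (R1), (R2), (R3), and (R5) without double counting, and making sure every configuration of the faces around $v$ is accounted for, is where the real care lies; once the local structure is pinned down, all the numerical inequalities are immediate.
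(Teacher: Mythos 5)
Your overall route is the same as the paper's: a $4$-vertex pays only via (R4); the Key Lemma applied to an edge to a $2$-neighbor forces two big neighbors; absence of $4$-cycles together with Lemma~\ref{rc-3face-2} limits the incident $3$-faces; and the income comes from (R3), from the edges to big neighbors via (R1)/(R2) when those edges lie on no $3$-face, and from (R5) when the triangle through the two big neighbors is a face. The paper merely organizes the cases differently (it does not split on $n_2(v)$), so your finer case split changes nothing essential, and your $n_2(v)=2$ analysis is correct.

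One step is misstated, however: in the $n_2(v)=1$ sub-case with exactly two big neighbors $w_1,w_2$, you invoke the ``observation'' that $4$-cycle-freeness forces at least one of $vw_1,vw_2$ off all $3$-faces. That is false as stated: the (unique) $3$-face at $v$ may be $vw_1w_2$ itself, in which case both big-neighbor edges lie on a $3$-face. The conclusion still holds there, but for a different reason, namely the one you already used for $n_2(v)=2$: that face has both other vertices big, so (R4) sends it nothing, and $v$ loses only $\frac{3}{5}$ against the $1$ it receives by (R3) (with (R5) even returning another $1$). The correct dichotomy is: either the unique $3$-face at $v$ is $vw_1w_2$ (no payment to it, and (R5) applies), or the unique $3$-face is ``bad'' and hence contains at most one of $w_1,w_2$, so the other big edge lies on no $3$-face and hands $v$ its $\frac{2}{5}+\frac{1}{10}=\frac{1}{2}$ by (R1)/(R2), giving $\frac{3}{2}\ge\frac{3}{5}+\frac{1}{2}$. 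With that sentence repaired, your proof is complete and matches the paper's argument.
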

\begin{clmproof}
Let $n_2(v)$ and $f_3(v)$ denote, respectively, the numbers of 2-neighbors and
incident 3-faces that get charge from $v$ by (R4).  

Suppose $v$ has no $2$-neighbor.  If $v$ gives no charge to incident $3$-faces
by (R4), then $v$ gives no charge at all, so $v$ ends with at least $4-4=0$.
If $v$ does give charge to an incident $3$-face by (R4), then (R4) implies that
$v$ has two big neighbors; by (R3), each big neighbor gives $v$ charge $1\over
2$.  Since $G$ has no 4-cycles, $v$ gives charge to at most two $3$-faces. 
So $v$ ends with at least $4-4+2({1\over 2})-2({1\over 2})=0$.

So assume $v$ has a $2$-neighbor, $u$.  Applying the Key Lemma to $uv$ shows
that $v$ has two big neighbors, $w_1$ and $w_2$; by (R3) each $w_i$ gives $v$
charge $1\over 2$.  If $vw_i$ is not on a $3$-face, for some $w_i$, then by
(R2) $vw_i$ gives $v$ charge ${2\over 5}+{1\over 10}$.  Thus, $v$ ends with at
least $4-4+2({1\over 2})+({2\over 5}+{1\over 10})-2\cdot{3\over5}>0$.  
So we assume that each $vw_i$ is on a 3-face.  Since $v$ has a 2-neighbor (which
is not on a 3-face with $v$, by Lemma~\ref{rc-3face-2}), and $G$ has no
4-cycles, $v$ has at most one incident 3-face.  Since $vw_1$ and $vw_2$ are both
on 3-faces, the 3-face must be $vw_1w_2$.  Because $w_1$ and $w_2$ are both big,
$v$ gives no charge to $vw_1w_2$.  So $v$ ends with at least $4-4+4({1\over
2})-2({3\over 5})>0$.
\end{clmproof}

\begin{clm}
Every $3$-face ends with nonnegative charge. 
\end{clm}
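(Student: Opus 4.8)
The plan is to show that a $3$-face $f=xyz$, which begins with charge $\ell(f)-4=-1$, collects a total of at least $1$. I would start from a structural remark: since $G$ is simple with minimum degree at least $2$ (there are no $1$-vertices) and has no $4$-cycle, every face of $G$ has length $3$ or at least $5$, and in particular no edge lies on two $3$-faces. Hence each of the three edges of $f$ has its \emph{other} incident face a $5^+$-face, so by (R1) that edge picks up $\frac15$ from that face plus $\frac1{10}$ for each of its big endpoints, and by (R2) it hands all of this to $f$. Writing $b$ for the number of big vertices on $f$, and using that each big vertex of $f$ is an endpoint of exactly two edges of $f$, summing over the three edges shows that (R1)--(R2) alone give $f$ a total of $\frac35+\frac{b}{5}$. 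I would then split on $b$.

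If $b\ge 2$, then $f$ already has received at least $\frac35+\frac25=1$, and we are done. If $b=0$, then by Lemma~\ref{rc-3face-2} none of $x,y,z$ is a $2$-vertex, and applying the Key Lemma to each of the three edges of $f$ forces at least two of $x,y,z$, say $x$ and $y$, to have at least two big neighbours. A $3$-vertex of $f$ has at most one big neighbour (its unique neighbour off $f$, since its other two neighbours lie on $f$ and hence are not big), so $x$ and $y$ are each a small vertex or a $4$-vertex with two big neighbours; in either case (R4) sends $\frac12$ from $x$ to $f$ and $\frac12$ from $y$ to $f$ (each is legitimate because $f$ is incident to a non-big vertex different from the donor). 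Thus $f$ gets at least $\frac35+1>1$.

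The remaining and most delicate case is $b=1$; let $x$ be the big vertex, so $f$ still needs $\frac15$ on top of the $\frac45$ supplied by (R1)--(R2). By Lemma~\ref{rc-3face-2} neither $y$ nor $z$ is a $2$-vertex, and by the Key Lemma applied to the edge $yz$ one of them, say $y$, has at least two big neighbours; since $z$ is not big, $y$ must have a big neighbour $t\notin\{x,y,z\}$, and in particular $d(y)\ge 3$. If $y$ is small or a $4$-vertex with two big neighbours, then (R4) sends $\frac12$ from $y$ to $f$ (legitimate via the non-big vertex $z\ne y$ on $f$), and we finish as above. Otherwise $y$ is a $3$-vertex with neighbours $x,z,t$, where $x$ and $t$ are big; since a $3$-vertex lies on at most one $3$-face (else a $4$-cycle appears), and that face is $f$, the edge $yt$ lies on no $3$-face, and $y$ has no $2$-neighbour (its neighbours have degrees $\ge 10,\ \ge 10,\ \ge 3$). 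Tracking the charge at $y$: it receives $\frac12$ from $x$ and $\frac12$ from $t$ by (R3), and a further $\frac12$ along $yt$ forwarded by (R2) (the edge $yt$ picks up $\frac1{10}$ from $t$ and $\frac15$ from each side facing a $5^+$-face, and $y$ is its unique minimum-degree endpoint), while $y$ gives away nothing except through (R6); hence $y$ has surplus $-1+\frac32=\frac12$. If $f$ has not already reached nonnegative charge by the time (R6) is applied, it is negative then, so $y$ dumps its surplus $\frac12$ onto $f$, leaving $f$ with at least $-\frac15+\frac12>0$.

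The main obstacle is precisely this last sub-case: when the vertex the Key Lemma hands us is a $3$-vertex, rule (R4) does not fire for it, and the needed charge has to be routed through (R6). Making that go through requires the bookkeeping above---in particular that the $3$-vertex's off-face neighbour is big, that the edge to it meets no $3$-face and therefore carries a full $\frac12$, and that the $3$-vertex has no $2$-neighbour draining its charge---so that its surplus is genuinely at least $\frac15$. Everything else is a direct accounting of (R1)--(R2) together with a single application of (R4).
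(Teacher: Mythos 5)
Your proof is correct and follows essentially the same route as the paper's: collect $\frac35$ (plus $\frac1{10}$ per big-vertex incidence) through the edges via (R1)--(R2), use Lemma~\ref{rc-3face-2}, the Key Lemma, and (R4) donations from small vertices or 4-vertices with two big neighbors, and in the critical one-big-vertex case track the charge of a 3-vertex with an off-face big neighbor ($\frac12+\frac12$ by (R3) plus $\frac12$ along its non-triangle edge) and route its excess $\frac12$ to the face by (R6). Organizing the cases by the number of big vertices on the face rather than by the vertex types present is only a cosmetic difference, and all the charge computations agree with the paper's.
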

\begin{clmproof}
Let $f=v_1v_2v_3$ be a $3$-face, where $d(v_1)\leq d(v_2)\leq d(v_3)$. 
By (R1) each of $v_1v_2, v_2v_3, v_3v_1$ gets $1\over 5$ from its incident
$5^+$-face, and by (R2) all of this charge goes to $f$.
If $f$ has two incident big vertices, then by (R1) edges $v_1v_2, v_2v_3,
v_3v_1$ get in total an additional ${4\over 10}$.  So $f$ ends with at least
$3-4+{3\over 5}+{4\over10}=0$.  If $v_1$ is a $2$-vertex, then $v_2$ and
$v_3$ are both big, by Lemma~\ref{rc-3face-2}, and we are done, as above.
So assume that $v_1$ is a $3^+$-vertex, and $v_2$ is not big. 
If some $v_i$ is a small vertex or a 4-vertex with two big neighbors (which, by
assumption, are not both incident to $f$), then $v_i$
gives $1\over 2$ to $f$ by (R4), so $f$ ends with at least $3-4+3({1\over
5})+{1\over 2}>0$.  So we assume that $f$ has at most one incident big vertex,
and has no incident small vertex, and no incident 4-vertex with two big
neighbors.  Applying
the Key Lemma to $v_1v_2$ shows that $f$ must have an incident big vertex. 
Otherwise $v_1$ and $v_2$ are each $4^-$-vertices with at most one big
neighbor, a contradiction.
Thus, we can assume that $f$ has exactly one incident big vertex, and has no
incident 2-vertex, small vertex, or 4-vertex with two big neighbors.

So assume that $v_3$ is big and that $v_1$ and $v_2$ are each either a 3-vertex
or else a 4-vertex with no big neighbor other than $v_3$.  Applying the Key
Lemma to $v_1v_2$ shows that $v_1$ must be a 3-vertex.  Furthermore, at least
one of $v_1$ and $v_2$ is a 3-vertex with a big neighbor $w$ not on $f$; by
symmetry, assume this is $v_1$.  By (R3), $w$ and $v_3$ each give $v_1$ charge
$1\over 2$.  Since edge $wv_1$ is not on a $3$-face, by (R2) it gives $v_1$
charge ${2\over 5}+{1\over 10}$.  So $v_1$ finishes (R5) with at least
$3-4+2({1\over 2})+{2\over 5}+{1\over 10}={1\over 2}$; by (R6) all of this
charge continues on to $f$.  So $f$ ends with at least $3-4+3({1\over
5})+{1\over 2}>0$.
\end{clmproof}

This completes the proof of Theorem~\ref{4-cycle-thm}.  \hfill $\square$\\

For completeness, we conclude this section with the definitions of
Alon--Tarsi number, paint number and correspondence chromatic number,
and the corollary that bounds these parameters for planar graphs with
no 4-cycles.  To denote the list-chromatic number of a graph $G$, we write
$\chil(G)$.

An \Emph{eulerian digraph} is one in which each vertex has indegree equal to
outdegree.  For a digraph $D$, let $EE(D)$ and $EO(D)$ denote the numbers of
eulerian subgraphs of $D$ in which the number of edges is even and odd,
respectively.  A digraph $D$ is \EmphE{Alon--Tarsi}{-.5cm} if $EE(D)\ne EO(D)$,
and it
is \Emph{$k$-Alon--Tarsi} if also each vertex has outdegree less than $k$.
An \Emph{orientation} of a graph $G$ is formed from $G$ by directing
each edge toward one of its endpoints.  The \Emph{Alon--Tarsi number} of
$G$, denoted
$\AT(G)$, is the smallest $k$ such that some orientation of $G$ is
$k$-Alon--Tarsi.  Note that every acyclic orientation $D$ is Alon--Tarsi, since
$EE(D)=1\ne 0=EO(D)$; the only eulerian subgraph of $D$ is the spanning
edgeless graph.  Suppose that $G$ has degeneracy $k$, and $\sigma$ is a vertex
ordering witnessing this.  By orienting each edge toward its endpoint that
appears earlier in $\sigma$, we conclude that $\AT(G)\le k+1$.

The paint number is defined using a two-player game. At round $i$, one
player (Lister) chooses a set $S_i$ of vertices and the other one
(Painter) answers by coloring an independent subset of $S_i$ with
color $i$. The winning conditions depend on a fixed integer $k$:
Lister wins if he presents a vertex on $k$ rounds but Painter never
colors it. Otherwise, Painter wins. The \Emph{paint number}
$\chi_p(G)$ is the smallest integer $k$ such that Painter has a
winning strategy with parameter $k$. This problem can be seen as a
generalization of list coloring, where the lists are not all known at
the beginning of the coloring process (take $S_i$ as the set of
vertices whose lists contain color $i$). As shown by
Schauz~\cite{schauz}, each $k$-Alon--Tarsi graph is
$k$-paintable. Thus, every $k$-degenerate graph $G$ satisfies
$\chi_p(G)\le \AT(G)\le k+1$.

Given a graph $G$ and a function $f:V(G)\to\mathbb{N}$, an
\emph{$f$-correspondence assignment} $C$ is given by a matching $C_{vw}$,
 for each $vw\in E(G)$, between $\{v\}\times\{1,\ldots,f(v)\}$ and
$\{w\}\times\{1,\ldots,f(w)\}$. We say that each vertex $x$ has \emph{$f(x)$
available colors}. A \emph{$k$-correspondence assignment} is an
$f$-correspondence assignment where $f(v)=k$ for all $v\in V(G)$.
Given an $f$-correspondence assignment $C$, a \Emph{$C$-coloring} is a
function $\varphi:V(G)\to\mathbb{N}$ such that $\varphi(v)\leqslant f(v)$ for
each $v\in V(G)$, and, for each edge $vw\in E(G)$, the pairs $(v,\varphi(v))$
and $(w,\varphi(w))$ are nonadjacent in $C_{vw}$.
The \Emph{correspondence chromatic number} of $G$, denoted $\chi_{corr}(G)$,
is the least integer $k$ such that, for every $k$-correspondence assignment $C$
of $G$, graph $G$ admits a $C$-coloring. Note that if $G$ is $k$-degenerate,
then coloring greedily in some order witnessing this shows that
$\chi_{corr}(G)\le k+1$.  Thus, we have the following corollary of
Theorem~\ref{4-cycle-thm}.

\begin{cor}
  If $G$ is a planar graph with no 4-cycles, then
  $\chi_{corr}(G^2)\le \Delta(G)+73$, $\chi_p(G^2)\leqslant \Delta(G)+73$,
  and $\AT(G^2)\le \Delta(G)+73$.
\label{maincor}
\end{cor}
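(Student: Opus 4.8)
The plan is to derive all three bounds directly from Theorem~\ref{4-cycle-thm}, using the three facts recalled in the preceding paragraphs: that a $k$-degenerate graph $H$ satisfies $\AT(H)\le k+1$, that $\chi_p(H)\le \AT(H)$ (Schauz), and that $\chi_{corr}(H)\le k+1$. First I would observe that the vertex order $\sigma$ produced by Theorem~\ref{4-cycle-thm} is precisely a witness that $G^2$ is $(\Delta(G)+72)$-degenerate. Indeed, ``$G^2$ is $k$-degenerate'' is equivalent to the existence of an order of $V(G^2)$ in which each vertex is preceded by at most $k$ of its neighbors; one direction of this equivalence uses that, in any induced subgraph, the last vertex of the inherited order has no more earlier neighbors than it does in $G^2$. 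Applying Theorem~\ref{4-cycle-thm} with $k=\Delta(G)+72$ gives the degeneracy claim.

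For the Alon--Tarsi bound, orient each edge of $G^2$ toward whichever of its endpoints appears earlier in $\sigma$. The resulting orientation $D$ is acyclic, so its only eulerian subgraph is the spanning edgeless one and hence $EE(D)=1\ne 0=EO(D)$, i.e.\ $D$ is Alon--Tarsi. Every vertex has outdegree at most $\Delta(G)+72$, so $D$ is $(\Delta(G)+73)$-Alon--Tarsi, giving $\AT(G^2)\le \Delta(G)+73$. Feeding this same $(\Delta(G)+73)$-Alon--Tarsi orientation into Schauz's theorem that every $k$-Alon--Tarsi graph is $k$-paintable yields $\chi_p(G^2)\le \AT(G^2)\le \Delta(G)+73$.

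For the correspondence chromatic number, fix an arbitrary $(\Delta(G)+73)$-correspondence assignment $C$ of $G^2$ and color the vertices greedily along $\sigma$. When a vertex $v$ is reached, at most $\Delta(G)+72$ of its neighbors in $G^2$ precede it and have already been colored; each such colored neighbor $w$, via the matching $C_{vw}$, excludes at most one of the $\Delta(G)+73$ colors available at $v$. Hence at least one color remains for $v$, and the procedure produces a $C$-coloring, so $\chi_{corr}(G^2)\le \Delta(G)+73$.

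I do not expect any real obstacle: all the work is already contained in Theorem~\ref{4-cycle-thm}, and everything else is a bookkeeping application of the three reductions quoted above. The only point deserving a moment's care is the first one — spelling out that the conclusion of Theorem~\ref{4-cycle-thm} is exactly $(\Delta(G)+72)$-degeneracy of $G^2$ — after which the three bounds follow uniformly.
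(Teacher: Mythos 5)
Your proposal is correct and follows exactly the route the paper intends: the corollary is obtained by reading the order $\sigma$ from Theorem~\ref{4-cycle-thm} as a witness of $(\Delta(G)+72)$-degeneracy of $G^2$ and then invoking the three reductions stated just before the corollary (acyclic orientation for $\AT$, Schauz's theorem for $\chi_p$, greedy coloring for $\chi_{corr}$). There is no gap; your write-up just makes explicit the bookkeeping the paper leaves implicit.
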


\section{Graphs with no 4-cycles and $\Delta$ large}
\label{sec3}

In this section we show that the upper bounds in Corollary~\ref{maincor} can be
strengthened to $\Delta(G)+2$ when $\Delta(G)$ is sufficiently large.
Initially, we just prove this upper bound for $\AT(G^2)$, which also implies it
for $\chi_p(G^2)$ and $\chil(G^2)$.  In Section~\ref{sec5} we extend this result
to $\chi_{corr}(G^2)$.

\begin{thm}
  \label{thm:main}
  There exists $\Delta_0$ such that if $G$ is a plane graph with no
  4-cycles and with $\Delta(G)\geq \Delta_0$, then $G^2$ is
  $(\Delta(G)+2)$-choosable. In fact, $\chi_p(G^2)\le\AT(G^2)\le \Delta(G)+2$.
\end{thm}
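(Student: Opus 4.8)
The plan is to argue by contradiction with a minimal counterexample, exactly in the spirit of the proof of Theorem~\ref{4-cycle-thm}. Fix a large constant $\Delta_0$, to be pinned down at the very end. Suppose $G$ is a plane graph with no $4$-cycle, with $\Delta(G)\ge\Delta_0$, such that $G^2$ is not $(\Delta(G)+2)$-Alon--Tarsi, and among all such graphs $G$ minimizes first the number of $3^+$-vertices and then the number of edges. As in Section~\ref{sec2}, $G$ is then connected with minimum degree at least $2$, and every structural lemma of Section~\ref{sec:reducibles} (the Key Lemma and Lemmas~\ref{rc-3face-2}--\ref{rc-4-3face3s}) still applies verbatim, since none of them used $\Delta(G)$ large. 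Because it suffices to bound $\AT(G^2)$ --- the bounds for $\chi_p(G^2)$ and $\chil(G^2)$ then follow from Schauz~\cite{schauz} --- the goal is to produce an orientation of $G^2$ with all out-degrees at most $\Delta(G)+1$ and with unequal numbers of even and odd eulerian subgraphs.

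The workhorse is an extension lemma for Alon--Tarsi orientations, adapted to reductions of the type used in Section~\ref{sec:reducibles}: delete a bounded set $R$ of low-degree vertices from $G$ and, for suitable pairs of vertices of $N_G(R)$, add a new vertex joined to both, obtaining a plane graph $G'$ with no $4$-cycle, with $\Delta(G')=\Delta(G)$, with strictly fewer $3^+$-vertices (or the same number and strictly fewer edges), and in which the subgraph of the square induced on $V(G)\setminus R$ is unchanged. By minimality $(G')^2$ is $(\Delta(G)+2)$-Alon--Tarsi. Provided $R$ is independent in $G^2$ and every vertex of $R$ has at most $\Delta(G)+1$ neighbours in $G^2-R$, we obtain the desired orientation of $G^2$ by taking one of $(G')^2$, deleting the auxiliary vertices, and orienting every remaining edge incident with $R$ away from $R$: each vertex of $R$ then has out-degree at most $\Delta(G)+1$ and is a source, so it lies in no eulerian subgraph and the even/odd counts are preserved. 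When $|R|>1$ the independence hypothesis may fail; a standard refinement of this argument (orienting the vertices of $R$ one at a time, each with out-degree at most $\Delta(G)+1$, while tracking the graph polynomial of $(G')^2$) handles that case.

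Using this lemma we establish a family of reducible configurations, which are quantitatively sharpened versions of Lemmas~\ref{rc-3face-2}--\ref{rc-4-3face3s}: since the budget over $\Delta$ is now only $2$ rather than $72$, a $2$-vertex must have both neighbours of degree very close to $\Delta(G)$, a $3$-vertex must be surrounded still more tightly, and a $3$-face carrying a $4^-$-vertex must see enough vertices of near-maximum degree nearby. The absence of $4$-cycles enters throughout: it forces any two vertices to share at most one neighbour --- which both controls $|N^2(v)|$ for a low-degree $v$ (for a $2$-vertex with neighbours $u_1,u_2$ it is essentially $d(u_1)+d(u_2)$) and limits how $3$-faces can accumulate around a vertex --- and it is also what keeps the auxiliary two-edge paths from creating $4$-cycles. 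When deleting a single vertex leaves a square-neighbourhood too large to re-add (the typical culprit being a $2$-vertex with two near-$\Delta$ neighbours, whose $G^2$-degree can be about $2\Delta$), we instead delete a bounded cluster built around it and re-add that cluster through the extension lemma. We then run a discharging argument with initial charge $d(v)-4$ at each vertex and $\ell(f)-4$ at each face (total $-8$ by Euler's formula), sending charge from vertices of high degree --- whose surplus $d(v)-4$ dwarfs anything their bounded-degree surroundings can require once $\Delta_0$ is large --- to the $2$-, $3$-, $4$-vertices and $3$-faces; the reducible configurations ensure that every deficient object is adjacent to or incident with enough donors and that no donor is overdrawn. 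Concluding that all final charges are nonnegative contradicts the total of $-8$. Finally we choose $\Delta_0$ large enough that ``high degree'' and ``bounded degree'' are widely separated (for the discharging arithmetic) and every ``at most $\Delta(G)+1$ square-neighbours'' bound invoked in the reductions holds.

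The main obstacle is the simultaneous calibration of the reducible configurations and the discharging rules. With slack only $2$, each configuration must be strong enough to make the discharging balance yet weak enough to be reducible using the single spare edge the extension lemma affords; the genuinely delicate cases are $2$-vertices adjacent to two vertices of near-maximum degree and short chains of $3$-faces glued along bounded-degree vertices, where the no-$4$-cycle hypothesis is the only structural lever available. Comparatively routine, but still requiring care, is checking that the cluster deletions keep $G'$ planar, $4$-cycle-free, and genuinely smaller, and that the re-added vertices stay sources (or are handled by the refined extension argument) so that the inequality between the numbers of even and odd eulerian subgraphs survives.
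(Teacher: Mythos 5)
There is a genuine gap, and it is exactly at the point where the real difficulty of Theorem~\ref{thm:main} lives. Your whole plan rests on an ``extension lemma'' in which a bounded cluster $R$ of low-degree vertices is deleted and then re-added as sources with out-degree at most $\Delta(G)+1$ in $G^2$; together with discharging, this is in essence an attempt to show that the relevant configurations can always be handled by a vertex-ordering (degeneracy-type) argument with slack $1$ over $\Delta$. But the extremal structure that a minimal counterexample actually contains is not a bounded cluster: it is a large \emph{region} --- two big vertices $b_1,b_2$ joined by $\Theta(\sqrt{k})$ internally disjoint paths of length $2$--$4$ (this is what Proposition~\ref{prop:bigreg} extracts, not via $d(v)-4$/$\ell(f)-4$ discharging but via the contracted multigraphs $G',G'',G'''$, Lemma~\ref{lem:1040}, and a pigeonhole count). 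In such a region every vertex $w$ of $B_1\cup B_2$ satisfies $N^2(w)\supseteq N(b_i)$ for a big vertex $b_i$ that may have degree $\Delta$, so unboundedly many vertices, pairwise adjacent in $G^2$ (they form two cliques), each have more than $\Delta+1$ neighbours in $G^2$. No deletion of a bounded cluster can contain them, and after uncolouring the whole region the induced square $H=G^2[B_1'\cup B_2']$ has minimum degree at least $|B_1'|-1$ while some vertices retain only $|B_1'|-44$ available colours; a greedy/source ordering therefore fails outright. The paper's proof gets past this only by exploiting the two-clique structure of $H$ through a kernel-perfect orientation and the Bondy--Boppana--Siegel lemma (Lemmas~\ref{lem:digraph} and~\ref{lem:const}); the authors note explicitly that for the Alon--Tarsi bound this Kernel Lemma step seems unavoidable. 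Your proposal contains no substitute for it, and your ``standard refinement'' for non-independent $R$ does not address the problem either, since the obstruction is not the bookkeeping of eulerian counts but the fact that the configuration is unbounded and its square-degrees exceed the budget everywhere.

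A secondary, related objection: with slack only $2$, ``quantitatively sharpened versions of Lemmas~\ref{rc-3face-2}--\ref{rc-4-3face3s}'' plus charges $d(v)-4$, $\ell(f)-4$ cannot be expected to close the argument, because the objects that survive all local reductions are precisely the regions, and their unavoidability is a statement about a vertex of small degree in $G'''$ (inherited from~\cite{BCP}) rather than anything visible to Euler-formula discharging on $G$ with bounded-size reducible configurations. So the proposal reproduces the architecture of Section~\ref{sec2} (which naturally yields $\Delta+72$) but does not supply the two new ingredients that Section~\ref{sec3} actually needs: the region-finding argument and the two-clique kernel (or, for correspondence colouring, the colour-saving) lemma for discharging the large regions.
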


Let $\Delta_0=23769500^2=564989130250000$ and fix $k\ge \Delta_0$.  We
prove by contradiction that if $G$ is a plane graph with no 4-cycles
and with $\Delta(G)\le k$, then $G^2$ is $(k+2)$-choosable.  (By
\Emph{plane graph}, we mean a planar graph with a fixed embedding in
the plane. In particular, the neighborhood of each vertex is naturally
endowed with a cyclic ordering.) For ease of exposition, we present
the proof only for choosability, although it also works for
paintability and Alon--Tarsi orientations.  Most of the reducible
configurations rely only on degeneracy, though at one point we use the
kernel lemma.

Assume the theorem is false and let $G$ be a counterexample that minimizes
$|E(G)|+|V(G)|$.  Let $L$ be an assignment of lists of size $k+2$ to
the vertices of $G$ such that $G^2$ has no $L$-coloring.
Throughout Section~\ref{sec3} we prove several structural
lemmas, which ultimately lead to a contradiction. 
We follow the same general approach as in~\cite{BCP}, which considered
planar graphs with girth at least 5; however, we need new ideas to
handle the presence of triangles.

\subsection{First Reducible Configurations} 

\begin{lem}
\label{sec3:lem1}
The graph $G$ is connected and has minimum degree at least $2$.
\end{lem}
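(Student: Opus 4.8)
The plan is the standard minimality argument: both halves of the lemma follow by showing that a disconnected graph, or one with a vertex of degree at most $1$, can be reduced, contradicting the choice of $G$ as a counterexample minimizing $|E(G)|+|V(G)|$. Recall that $L$ is a list assignment of size $k+2$ for which $G^2$ has no $L$-coloring, so in each case it suffices to produce an $L$-coloring of $G^2$.

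First I would prove connectedness. Suppose $G$ is disconnected, and let $H$ be a component of $G$. Then $H$ is plane, has no $4$-cycles, and satisfies $\Delta(H)\le\Delta(G)\le k$, while $|E(H)|+|V(H)|<|E(G)|+|V(G)|$ (there are at least two nonempty components). By minimality $H^2$ is $(k+2)$-choosable. Since $G^2$ is the disjoint union of the graphs $H^2$ over the components $H$ of $G$, I can $L$-color each $H^2$ separately, using the restriction of $L$ to $V(H)$, and take the union; this is an $L$-coloring of $G^2$, a contradiction. So $G$ is connected.

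Next I would rule out vertices of degree at most $1$. Suppose $G$ has a vertex $v$ with $d_G(v)\le 1$, and set $G'=G-v$. Again $G'$ is plane, has no $4$-cycles, and $\Delta(G')\le k$, with $|E(G')|+|V(G')|<|E(G)|+|V(G)|$, so by minimality $(G')^2$ is $(k+2)$-choosable. The one point needing care is the identity $(G-v)^2=G^2-v$: since $v$ has degree at most $1$ in $G$, it is never an internal vertex of a path, hence for all $x,y\in V(G)\setminus\{v\}$ we have $\dist_{G-v}(x,y)\le 2$ if and only if $\dist_G(x,y)\le 2$. Therefore any $L$-coloring of $(G')^2$ is an $L$-coloring of $G^2-v$. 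Finally, $v$ has at most $k$ neighbors in $G^2$ (indeed at most $d_G(u)\le\Delta(G)\le k$, where $u$ is the unique neighbor of $v$ when $d_G(v)=1$, and none when $d_G(v)=0$), all of which are already colored; since $|L(v)|=k+2>k$, some color of $L(v)$ is available for $v$. Extending the coloring to $v$ yields an $L$-coloring of $G^2$, a contradiction. Hence $G$ has minimum degree at least $2$.

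I do not expect any genuine obstacle here; this is a warm-up reduction. The only spots that deserve an explicit sentence are that vertex deletion preserves planarity and the no-$4$-cycle property (immediate), that a component of a disconnected graph is strictly smaller in the measure $|E|+|V|$ (immediate), and the equality $(G-v)^2=G^2-v$ when $d_G(v)\le 1$, which I would state and justify as above. The same argument works verbatim for paintability and for Alon--Tarsi orientations, replacing ``extend the coloring'' by the corresponding greedy extension step.
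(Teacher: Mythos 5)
Your proposal is correct and follows essentially the same route as the paper: use minimality of $|V(G)|+|E(G)|$ to color each component (giving connectedness) and to color $(G-v)^2$ when $d(v)\le 1$, then extend to $v$ greedily since it has at most $k$ neighbors in $G^2$ while $|L(v)|=k+2$. Your extra remarks (that $(G-v)^2=G^2-v$ for $d(v)\le 1$ and the degree-$0$ case) just make explicit points the paper leaves implicit.
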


\begin{proof}
  Note that $G$ is connected, since otherwise one of its components is
  a smaller counterexample.  Now assume there exists a 1-vertex
  $v\in V(G)$. By the minimality of $G$, we can $L$-color
  $(G\setminus \{v\})^2$.  Since $|L(v)|=k+2$, and $v$ has at most
  $1+(k-1)$ neighbors in $G^2$, we can color $v$ with a color not used
  on its neighbors in $G^2$, which is a contradiction.
\end{proof}

The next two lemmas essentially show that every vertex of $G$ must
be near a vertex of high degree. To formalize this, we use the
following terminology: a vertex $v\in V(G)$ is \emph{big}\aside{big, small} if
$\deg(v)\ge \sqrt{k}$ and \emph{small} otherwise. Denote by $B$ and $S$
the sets of big and small vertices. To refine the set $S$, we write 
$S_i$ for the set of small vertices with exactly $i$ big neighbors. 

\begin{rem}
In our figures in the rest of the paper, we draw small vertices as circles, and
big vertices as
squares.  Further, we use black circles for vertices with all neighbors shown.
So a white vertex could have more neighbors than those shown; in fact, it could
also have edges (that are not drawn) to other vertices that are shown.  
For example, Figure~\ref{fig:first_red} shows the configurations
forbidden by Lemma~\ref{lem:first_red}. 
\end{rem}

\begin{figure}[!ht]
  \centering
  \begin{tikzpicture}[
      b/.style={fill=black,minimum size = 5pt,rectangle,inner sep=1pt},
      s/.style={fill=black,minimum size = 5pt,ellipse,inner sep=1pt},
      s2/.style={draw=black,minimum size = 5pt,ellipse,inner sep=1pt},
      s1/.style={draw=black,minimum size = 5pt,ellipse,inner sep=1pt,fill=black!20},
      e/.style={fill=white,minimum size = 0pt,ellipse,inner sep=0pt},
    ]
    \node[s2] (u1) at (0,0.5)  {};
    \node[s2] (u2) at (0,-0.5)  {};
    \node[s,label=above:\footnotesize{$v$}] (u) at (1,0)  {};
    \node[s,label=above:\footnotesize{$w$}] (v) at (2,0) {};
    \node[s2] (v1) at (3,0.5)  {};
    \node[s2] (v2) at (3,-0.5)  {};
    \draw (u1) -- (u) node[midway] (uu1){};
    \draw (u) -- (v);
    \draw (v) -- (v1) node[midway] (vv1){};
    \draw (u2) -- (u)node[midway] (uu2){};
    \draw (v2) -- (v)node[midway] (vv2){};
    \draw [dotted] (uu1) -- (uu2);
    \draw [dotted] (vv1) -- (vv2);
    \tikzset{yshift=-1cm}
    \node[e,label=above:{}] (x) at (0.5,-.250)  {};
    \node[s,label=above:\footnotesize{$v$}] (u) at (1,-.250)  {};
    \node[s,label=above:\footnotesize{$w$}] (v) at (2,-.25) {};
    \node[s2] (v1) at (3,-.250)  {};
    \draw (x)-- (u) -- (v);
    \draw (v) -- (v1);
  \end{tikzpicture}
  \caption{Forbidden configurations of Lemma~\ref{lem:first_red}.}
  \label{fig:first_red}
\end{figure}
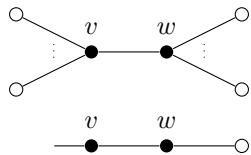

\begin{lem}
  \label{lem:first_red}
  For each edge $vw\in E(G)$, either $v\in N[B]$ or $w\in
  N[B]$. Further, if $\deg(v)=\deg(w)=2$, then $v,w\in N[B]$.
\end{lem}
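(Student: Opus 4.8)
The plan is to prove Lemma~\ref{lem:first_red} by the standard minimal-counterexample reduction, exactly mirroring the Key Lemma and Lemma~\ref{rc-3face-2} from Section~\ref{sec2}, but now using choosability of $G^2$ instead of the degeneracy order. Recall that $G$ is a minimal counterexample to Theorem~\ref{thm:main}, with a bad list assignment $L$ of size $k+2$, and that a vertex is big iff its degree is at least $\sqrt{k}$, so a small vertex has degree at most $\sqrt{k}-1$. The key quantitative fact I would exploit is that if $v$ is small and has at most one big neighbor, then the number of neighbors of $v$ in $G^2$ is small compared to $k$: indeed $|N^2(v)| \le \deg(v) + \sum_{u\in N(v)}(\deg(u)-1)$, and if all but (at most) one neighbor $u$ of $v$ is small then that sum is at most $(\sqrt{k}-1) + (\sqrt{k}-2)(\sqrt{k}-2) + (k-1)$, which is roughly $k + \sqrt{k}$ — strictly less than $k+1$ once we are slightly more careful, or at worst we simply make sure to delete enough vertices so that the leftover list at $v$ still has an available color. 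The crucial point is that $k+2$ beats this count, so $v$ can always be colored last.

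First I would handle the second assertion (the easier one): suppose $\deg(v)=\deg(w)=2$, $vw\in E(G)$, and — for contradiction — that $v\notin N[B]$; so neither $v$ nor its other neighbor is big. By minimality of $G$, the graph $(G-v)^2$ has an $L$-coloring. Since $\deg(v)=2$ and the two neighbors of $v$ are small (one of them is $w$, whose degree is $2$, hence small as $k$ is large; the other is small by assumption), we get $|N^2_G(v)| \le 2 + (2-1) + (\sqrt{k}-1-1) = \sqrt{k}+1 < k+2$, so we can extend the coloring to $v$, a contradiction. Hence $v\in N[B]$, and by symmetry $w\in N[B]$.

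Next, the first assertion: let $vw\in E(G)$ and suppose neither $v$ nor $w$ lies in $N[B]$, i.e.\ both $v$ and $w$ are small and every neighbor of either of them is small. By minimality, $(G-vw)^2$ has an $L$-coloring $\varphi$. Note that deleting the edge $vw$ only removes $vw$ itself and possibly some distance-two edges through $v$ or through $w$ from $G^2$; in particular $\varphi$ is a valid partial coloring of $G^2$ on $V(G)\setminus\{v,w\}$ once we uncolor $v$ and $w$. Now I recolor $v$ and $w$ greedily: since $v$ is small and all its neighbors in $G$ are small, $|N^2_G(v)| \le (\sqrt{k}-1) + (\sqrt{k}-1)(\sqrt{k}-2) = (\sqrt{k}-1)^2 < k \le k+1$, so there is a color in $L(v)$ avoiding the (at most $k+1$, after counting $w$) colored neighbors of $v$ in $G^2$; color $v$, then likewise color $w$, whose count of colored $G^2$-neighbors is also at most $k+1$. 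This contradicts the fact that $G^2$ has no $L$-coloring.

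The main obstacle — really the only place one must be a little careful — is the bookkeeping in the degree counts: one must confirm that ``small with $\le 1$ big neighbor'' genuinely forces $|N^2(v)| \le k+1$ (not just $\le k+$ something) for all $k\ge \Delta_0$, and in the edge-contraction step that uncoloring both $v$ and $w$ and recoloring them in order really works, i.e.\ that $v$ sees at most $|L(v)|-1 = k+1$ forbidden colors even counting $w$ among its $G^2$-neighbors. Both follow from $\sqrt{k}$ being much smaller than $k$, so the choice $\Delta_0 = 23769500^2$ is vastly more than enough; I would just state the inequality $(\sqrt k - 1)^2 + 1 \le k+1$ (and its one-big-neighbor analogue $\sqrt k - 1 + (\sqrt k - 2)^2 + (k-1) \le k+1$ fails, so in the one-big-neighbor case of the \emph{second} statement we instead rely on both endpoints having degree $2$, which is why the hypothesis there is stronger) and let the reader check it. No new ideas beyond those already used for the Key Lemma are needed here; the lemma is the choosability analogue of Lemmas~\ref{keylem} and~\ref{rc-3face-2} combined.
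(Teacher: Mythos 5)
Your proof of the first assertion is essentially the paper's own argument: delete the edge $vw$, $L$-color $(G-vw)^2$ by minimality, uncolor $v$ and $w$, and recolor them greedily, each having fewer than $\sqrt{k}\cdot\sqrt{k}$ colored neighbors in $G^2$; your remark that every edge of $G^2$ not incident to $v$ or $w$ survives in $(G-vw)^2$ is exactly what makes this legitimate. The gap is in your proof of the second assertion, where you switch to deleting the \emph{vertex} $v$. An $L$-coloring of $(G-v)^2$ need not be a proper partial coloring of $G^2$: the two neighbors of $v$, say $w$ and $u$, are adjacent in $G^2$ (they are at distance two through $v$), but they need not be adjacent in $(G-v)^2$. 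Indeed, since $G$ has no 4-cycle they have no common neighbor besides $v$, so unless $vwu$ is a triangle the $G^2$-edge $wu$ simply disappears when $v$ is deleted, and the coloring of $(G-v)^2$ may assign $w$ and $u$ the same color. No choice of color for $v$ can repair that conflict, so the step ``we can extend the coloring to $v$, a contradiction'' fails. This is precisely the trap you were careful to avoid in the first assertion, and the degree count $|N^2(v)|\le\sqrt{k}+1$ is irrelevant to it.

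The repair is to keep the edge-deletion reduction in the degree-two case as well, which is what the paper does: by the first assertion one may assume $v\in N[B]$ and $w\notin N[B]$; delete $vw$, color $(G-vw)^2$, uncolor $v$ and $w$. Then $v$ has at most $2+1+(k-1)=k+2$ neighbors in $G^2$, of which $w$ is uncolored, hence at most $k+1$ colored ones, so $v$ gets a color from its list of size $k+2$; afterwards $w$, all of whose $G$-neighbors are small, is colored as in the first assertion. (Your vertex-deletion variant could also be patched by first recoloring $w$ --- it has at most $k+1$ colored $G^2$-neighbors once $v$ is removed, so it can avoid $\varphi(u)$ as well --- but some such extra step is mandatory; as written the reduction is not sound.) The quantitative bookkeeping you flag as the main obstacle is not the issue: $\sqrt{k}+1<k+2$ holds with enormous slack; the real point in the degree-two case is that the big neighbor's $k-1$ constraints are offset by $w$ being uncolored when $v$ is colored.
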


\begin{proof}
  Assume to the contrary that some edge $vw$ has
  $v,w\notin N[B]$. By minimality, we can $L$-color 
  $(G-vw)^2$. We uncolor $v$ and $w$.
  Since $v,w\notin N[B]$, both $v$ and $w$ have less than
  $\sqrt{k}\times\sqrt{k}$ colored neighbors in $G^2$.  Since
  $|L(v)|=|L(w)|=k+2$, we can find distinct available colors for $v$
  and $w$.

Suppose instead that $d(v)=d(w)=2$ and $v\in N[B]$ and $w\notin N[B]$.
Again, by minimality we $L$-color $(G-vw)^2$, then uncolor $v$ and $w$.
  Now $v$ has at most $k+1$ colored neighbors
  in $G^2$, so $v$ has an available color. 
As before, we can color $w$.
This gives an $L$-coloring for $G^2$, a contradiction.
\end{proof}

\begin{lem}
  \label{lem:notriangle}
  If $vw$ is an edge with $\deg(v)=\deg(w)=2$, then $v$ and $w$
  have no common neighbor.
\end{lem}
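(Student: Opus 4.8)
\textit{Proof proposal.}
The plan is to argue by contradiction from the minimality of $G$. Suppose $vw\in E(G)$ with $\deg(v)=\deg(w)=2$, and that $v$ and $w$ share a common neighbor $u$. Then $N_G(v)=\{u,w\}$ and $N_G(w)=\{u,v\}$, and $uvw$ bounds a $3$-cycle. Note that Lemma~\ref{lem:first_red} forces $u\in B$, but we will not even need this.

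The key structural point I would record first is that $(G-v)^2 = G^2-v$. Deleting $v$ from $G$ can only destroy an edge of $G^2$ among the surviving vertices if some pair $\{x,y\}$ with $x\ne y$ is at distance two in $G$ \emph{solely} via a path through $v$. But any path of length two with midpoint $v$ joins two vertices of $N_G(v)=\{u,w\}$, and the edge $uw$ still belongs to $G-v$; so no such pair exists, and $(G-v)^2 = G^2-v$. Now $G-v$ is a plane graph with no $4$-cycles and $\Delta(G-v)\le\Delta(G)\le k$, and it is strictly smaller than $G$ (it has fewer vertices), so by the minimality of $G$ it is not a counterexample. Restricting $L$ to $V(G)\setminus\{v\}$, we obtain an $L$-coloring $\varphi$ of $(G-v)^2 = G^2-v$.

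It remains to extend $\varphi$ to $v$. Its neighborhood in $G^2$ is
\[
  N^2(v) = \{u,w\}\cup\bigl(N_G(u)\setminus\{v,w,u\}\bigr),
\]
since $N_G(w)=\{u,v\}$ contributes nothing new; as $v,w\in N_G(u)$ and $u\notin N_G(u)$, this gives $|N^2(v)| = 2 + (\deg(u)-2) = \deg(u)\le\Delta(G)\le k < k+2 = |L(v)|$. Hence some color of $L(v)$ is unused by $\varphi$ on $N^2(v)$; assigning it to $v$ produces an $L$-coloring of $G^2$, contradicting the choice of $G$. The same deletion argument works verbatim for paintability and for Alon--Tarsi orientations (the list $L(v)$ is replaced by a back-degree count, which is at most $\deg(u)$), which is why the lemma holds in all three settings even though only choosability is written out.

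I do not expect a substantial obstacle here: the argument is a one-vertex deletion followed by a greedy extension. The single point needing care is the identity $(G-v)^2 = G^2-v$, and it is precisely here that the common-neighbor hypothesis is essential — for adjacent $2$-vertices \emph{without} a common neighbor, deleting $v$ would generically break a distance-two relation between the two outer neighbors, so the clean reduction would fail.
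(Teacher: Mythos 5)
Your proof is correct and takes essentially the same route as the paper: a minimality reduction on the triangle followed by a greedy extension, using that the degree of the common neighbor $u$ bounds the number of colored $G^2$-neighbors. The only cosmetic difference is that the paper deletes both $v$ and $w$ and recolors them (each has at most $\deg(u)-1\le k-1$ colored $G^2$-neighbors, hence at least $3$ available colors), whereas you delete only $v$ and use the edge $uw$ to justify $(G-v)^2=G^2-v$; both versions work.
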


\begin{proof}
  Assume there exists a triangle $vwx$ with $\deg(v)=\deg(w)=2$. 
 By minimality, we can $L$-color $(G\setminus\{v,w\})^2$. Both $v$ and $w$
  have $\deg(x)-1\leq k-1$ colored neighbors in $G^2$. So
  $v$ and $w$ each have at least $3$ available colors, and thus we
  can color them both.
\end{proof}

\begin{lem}
  \label{lem:config++}
  Let $vx_1x_2$ be a triangle of $G$ such that some vertex
  $w\in S\setminus\{v,x_1,x_2\}$ has a common 2-neighbor with $x_1$.
  If either (a) $d(x_2)=2$ or (b) $d(x_2)=3$ and $w$ and $x_2$ have a
  common 2-neighbor, then $d(x_1)\ge 4$.
\end{lem}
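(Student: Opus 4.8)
The plan is to argue by contradiction: suppose $d(x_1)\le 3$. Let $y$ be a common $2$-neighbor of $w$ and $x_1$ (such a $y$ exists by hypothesis), so that $N(y)=\{x_1,w\}$. First I would verify that $y\notin\{v,x_2\}$: if $y=v$, then $v$ is a $2$-vertex with $N(v)=\{x_1,w\}$, contradicting $v\sim x_2$ (the triangle $vx_1x_2$) together with $x_2\notin\{x_1,w\}$; the case $y=x_2$ is symmetric. Hence $v,x_2,y$ are three distinct neighbors of $x_1$, so $d(x_1)\ge 3$ and therefore $d(x_1)=3$ with $N(x_1)=\{v,x_2,y\}$. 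I would also record that $w\not\sim v$: otherwise $v,x_1,y,w$ would be a $4$-cycle. (Note $w\not\sim x_1$ is automatic, since $w\notin N(x_1)=\{v,x_2,y\}$.)

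The common step in both cases is to delete $\{x_1,y\}$. Since $v\sim x_2$ and the only neighbor of $y$ other than $x_1$ is $w$, removing $x_1$ and $y$ keeps every remaining pair of vertices at the same distance (anything at distance $\le 2$ stays at distance $\le 2$), so $(G-\{x_1,y\})^2=G^2[V(G)\setminus\{x_1,y\}]$. By the minimality of $G$ this graph admits an $L$-coloring $\varphi$, and it remains to extend $\varphi$ to $x_1$ and $y$. In case (a), $d(x_2)=2$ gives $N(x_2)=\{v,x_1\}$, so $N^2(x_1)\setminus\{y\}\subseteq\bigl(N(v)\cup\{v,w\}\bigr)\setminus\{x_1\}$, which has at most $k+1$ elements (using $w\notin N(v)\cup\{v\}$, from $w\not\sim v$, and $x_1\in N(v)$). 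Hence $x_1$ has a free color in its list of size $k+2$; after fixing it, $y$ has at most $d(x_1)-1+d(w)-1\le\sqrt k+1$ colored $G^2$-neighbors (here I use $w\in S$, so $d(w)<\sqrt k$), one of which is $x_1$, so $y$ too can be colored. This contradicts the choice of $G$.

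Case (b) is where the real work lies. Now $d(x_2)=3$; letting $z$ be a common $2$-neighbor of $w$ and $x_2$, the no-$4$-cycle condition (via $w\not\sim v$) plus a quick coincidence check give $z\notin\{v,x_1,y\}$, hence $N(x_2)=\{v,x_1,z\}$; moreover $\{x_1,y,z\}$ induces a triangle in $G^2$ ($x_1y$ is an edge, $x_1z$ goes through $x_2$, and $yz$ through $w$). The naive extension now only yields $N^2(x_1)\setminus\{y\}\subseteq\bigl(N(v)\cup\{v,z,w\}\bigr)\setminus\{x_1\}$, which can have $k+2$ elements, one too many. To recover, after producing $\varphi$ on $V(G)\setminus\{x_1,y\}$ I would additionally \emph{uncolor} $z$, and then recolor $x_1$, then $y$, then $z$. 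For $x_1$: its colored $G^2$-neighbors now lie in $\bigl(N(v)\cup\{v,w\}\bigr)\setminus\{x_1,z\}$, of size at most $k+1$, so a color is available. For $y$ and then $z$: each has only $O(\sqrt k)$ colored $G^2$-neighbors (using $d(w)<\sqrt k$ and $d(x_1)=d(x_2)=3$), and by recoloring in this order the forbidden lists already contain the previously recolored members of the triangle $\{x_1,y,z\}$, so both receive colors. This contradicts the minimality of $G$ and finishes the proof.

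I expect the main obstacle to be precisely this off-by-one in case (b): one must notice that $z$ is a genuine $G^2$-neighbor of $x_1$ (reached through $x_2$), so temporarily uncoloring it frees up the one extra color $x_1$ needs, while $z$ itself is harmless to recolor because it is small and sees only $x_2$, $w$, and their neighborhoods. The hypothesis that $G$ has no $4$-cycles is used only to pin down the $3$-vertex neighborhoods $N(x_1)$ and $N(x_2)$ and to exclude coincidences among $v,w,x_1,x_2,y,z$; everything else is degeneracy counting against lists of size $k+2$, together with the basic facts already established (connectivity and minimum degree at least $2$ from Lemma~\ref{sec3:lem1}).
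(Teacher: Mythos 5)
Your proof is correct and follows essentially the same strategy as the paper: use the no-4-cycle hypothesis to pin down $N(x_1)$, $N(x_2)$ and the non-adjacency $vw\notin E(G)$, then invoke minimality to color a subgraph with the configuration removed and extend greedily, the key count being that the colored $G^2$-neighbors of $x_1$ lie in $N[v]\cup\{w\}$. The paper simply deletes $\{x_1,x_2,y_1\}$ (and $y_2$ in case (b)) all at once rather than keeping $x_2$ colored and uncoloring $z=y_2$, but this is only a bookkeeping difference, not a different argument.
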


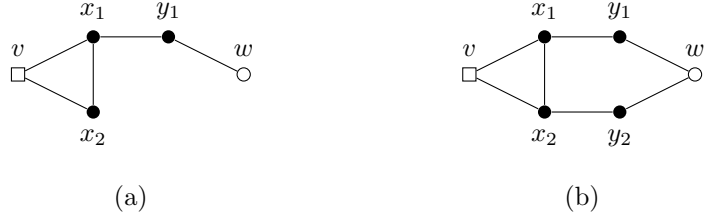
\begin{figure}[!ht]
  \centering
  \begin{tikzpicture}[xscale=-1,
      b/.style={fill=black,minimum size = 5pt,rectangle,inner sep=1pt},
      bw/.style={draw=black,minimum size = 5pt,rectangle,inner sep=1pt},
      s/.style={fill=black,minimum size = 5pt,ellipse,inner sep=1pt},
      s2/.style={draw=black,minimum size = 5pt,ellipse,inner sep=1pt},
      s1/.style={draw=black,minimum size = 5pt,ellipse,inner sep=1pt,fill=black!20},
      e1/.style={draw=white,minimum size = 5pt,ellipse,inner sep=1pt},
    ]
    \node[s2,label=above:\footnotesize{$w$}] (u) at (0,-0.5)  {};
    \node[s,label=above:\footnotesize{$x_1$}] (x3) at (2,0) {};
    \node[s,label=below:\footnotesize{$x_2$},below of=x3] (x2) {};
    \node[s,label=above:\footnotesize{$y_1$}] (y3) at (1,0) {};
    \node[s,label=below:\footnotesize{$y_2$},below of=y3] (y2) {};
    \node[bw,label=above:\footnotesize{$v$}] (v) at (3,-0.5) {};
    \draw (v) -- (x2) -- (y2) -- (u);
    \draw (v) -- (x3) -- (y3) -- (u);
    \draw (x3) -- (x2);
    \node[e1] at (1.5,-2.15) {\footnotesize(b)};

    \tikzset{xshift=6cm}
    \node[s2,label=above:\footnotesize{$w$}] (u) at (0,-0.5)  {};
    \node[s,label=above:\footnotesize{$x_1$}] (x3) at (2,0) {};
    \node[s,label=below:\footnotesize{$x_2$},below of=x3] (x2) {};
    \node[s,label=above:\footnotesize{$y_1$}] (y3) at (1,0) {};
    \node[bw,label=above:\footnotesize{$v$}] (v) at (3,-0.5) {};
    \draw (v) -- (x2) -- (x3);
    \draw (v) -- (x3) -- (y3) -- (u);
    \node[e1] at (1.5,-2.15) {\footnotesize(a)};

  \end{tikzpicture}
  \caption{Forbidden configurations of Lemma~\ref{lem:config++}.}
  \label{fig:config++}
\end{figure}

\begin{proof}
  Let $y_1$ and $y_2$ denote the 2-neighbors of $w$ common with $x_1$
  and $x_2$ if they exist (in Case (a), only $y_1$ is defined).
  Assume that $\deg(x_1)=3$. 
  If $vw\in E(G)$, then $wvx_1y_1$ is a 4-cycle in $G$, a contradiction.  So
$vw\notin E(G)$.  By assumption, $w\notin\{v,x_1,x_2,y_1\}$.  So if $wx_2\in
E(G)$, then $wx_2x_1y_1$ is a 4-cycle in $G$, again a contradiction.  Thus,
$wx_2\notin E(G)$.  Since $d(x_1)=3$ and $v,x_2,y_1\in N(x_1)$, we must have
$w\notin N(x_1)$.  Since $N(y_1)=\{x_1,w\}$, also $vy_1\notin E(G)$.
So in both cases $wx_1,wx_2,wv,vy_1\notin E(G)$.  And in (b)
also $vy_2\notin E(G)$.

  Let $S=\{x_1,x_2,y_1\}$ in Case (a), and $S=\{x_1,x_2,y_1,y_2\}$ in
  Case (b). By minimality, we $L$-color $(G\setminus S)^2$.
  For each $i\in \{1,2\}$, the number of colored neighbors in $G^2$ of
  $x_i$ is at most:
\[|\{v,w\}|+ |N(v)\setminus\{x_1,x_2\}|\leq 2+(k-2)=k.\]
Thus, $x_1$ and $x_2$ both have at least 2 available colors, so we can
color them.
  Further, for each $i\in\{1,2\}$, the number of colored neighbors of $y_i$ 
is at most \[|\{v,w,x_1,x_2\}|+|N(w)\setminus\{y_i\}|\leq
    4+\sqrt{k}-1=\sqrt{k}+3.\]
  Therefore, $y_1$ and $y_2$ (if defined) both have $k-\sqrt{k}-1$
  available colors. Since $k$ is large enough, we can color them to
  get an $L$-coloring for $G^2$, a contradiction.
\end{proof}

We combine Lemmas~\ref{lem:first_red} and \ref{lem:config++} to prove
the reducibility of the bigger configuration shown in
Figure~\ref{fig:smallfacesaux}.

\begin{figure}[!ht]
  \centering
  \begin{tikzpicture}[
      b/.style={draw=black,minimum size = 5pt,rectangle,inner sep=1pt},
      s/.style={fill=black,minimum size = 5pt,ellipse,inner sep=1pt},
      s2/.style={draw=black,minimum size = 5pt,ellipse,inner sep=1pt},
      s1/.style={draw=black,minimum size = 5pt,ellipse,inner sep=1pt,fill=black!20},
    ]
    \node[s2,label=below:\footnotesize{$w$}] (u) at (0,0)  {};
    \node[s2,label=left:\footnotesize{$x_3$}] (x3) at (0,2) {};
    \node[s2,label=right:\footnotesize{$\!\!x_4$}] (x4) at (1,2) {};
    \node[s2,label=right:\footnotesize{$x_5$}] (x5) at (2,2) {};
    \node[s2,label=right:\footnotesize{$\hspace{-.8cm}x_2$}] (x2) at (-1,2) {};
    \node[s2,label=left:\footnotesize{$\,x_1$}] (x1) at (-2,2) {};
    \node[s2] (x6) at (0.33,2) {};
    \node[s2] (x7) at (0.66,2) {};
    \node[s2] (x8) at (-1.66,2) {};
    \node[s2] (x9) at (1.66,2) {};
    \node[s,label=left:\footnotesize{$y_3$}] (y3) at (0,1) {};
    \node[s,label=right:\footnotesize{$y_4$},right of=y3] (y4) {};
    \node[s,label=right:\footnotesize{$y_5$},right of=y4] (y5) {};
    \node[s,label=left:\footnotesize{$y_2$}] at (-1,1) (y2) {};
    \node[s,label=left:\footnotesize{$y_1$},left of=y2] (y1) {};
    \node[b,label=above:\footnotesize{$v$}] (v) at (0,3) {};
    \draw (v) -- (x1) -- (y1) -- (u);
    \draw (v) -- (x2) -- (y2) -- (u);
    \draw (v) -- (x3) -- (y3) -- (u);
    \draw (v) -- (x4) -- (y4) -- (u);
    \draw (v) -- (x5) -- (y5) -- (u);
    \draw (x6) -- (x7) -- (v) -- (x6);
    \draw (x1) -- (x8) -- (v) -- (x9);
  \end{tikzpicture}
  \caption{A possible configuration of Lemma~\ref{lem:smallfacesaux}.}
  \label{fig:smallfacesaux}
\end{figure}
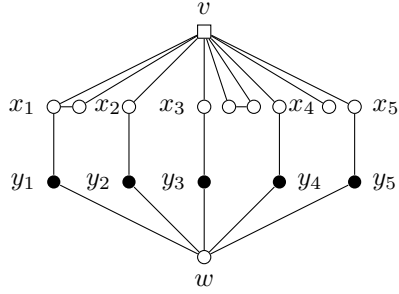

\begin{lem}
  \label{lem:smallfacesaux}
  Fix $v, w\in V(G)$ such that $w\in S$. Then the graph $G$ cannot
  contain distinct vertices $y_1,\ldots,y_5$ that are consecutive
  neighbors of $w$ and that satisfy both conditions below; see
  Figure~\ref{fig:smallfacesaux}.
  \begin{enumerate}
  \item Each $y_i$ has degree two and has a common neighbor $x_i$ with $v$.
  \item For each $i\in\{1,\ldots,4\}$, each vertex inside cycle
$vx_iy_iwy_{i+1}x_{i+1}$ is adjacent to $v$.
  
  \end{enumerate}
\end{lem}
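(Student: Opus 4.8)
The plan is to argue by minimality, exactly as in the earlier reducible configurations of this section: assume the minimal counterexample $G$ contains the configuration of Figure~\ref{fig:smallfacesaux}, delete the five degree-two vertices $y_1,\dots,y_5$, colour what remains, and extend. First I would record the consequences of $G$ having no $4$-cycle. The $x_i$ are pairwise distinct (if $x_i=x_j$ then $x_iy_iwy_j$ is a $4$-cycle), none equals $v$ or $w$, and $vw\notin E(G)$ (else $vwy_ix_i$ is a $4$-cycle). Also $N(x_i)\cap N(w)=\{y_i\}$ and $N(x_i)\cap N(x_j)=\{v\}$ for $i\ne j$, since any further common neighbour would close a $4$-cycle; in particular the sets $N(x_i)\setminus\{v,y_i\}$ are pairwise disjoint. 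Finally, in $G^2$ the vertices $y_1,\dots,y_5$ form a clique (they share the neighbour $w$) and each $y_i$ is adjacent to $v$ (via $x_i$).

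For the reduction, set $A=\{y_1,\dots,y_5\}$. By minimality, $(G\setminus A)^2$ has an $L$-colouring $\varphi$, and I want to extend $\varphi$ to $A$. For each $i$ the colours forbidden at $y_i$ lie in $\varphi\bigl(N_G^2(y_i)\setminus A\bigr)$, and $N_G^2(y_i)\setminus A\subseteq\{w,x_i\}\cup\bigl(N(w)\setminus\{y_1,\dots,y_5\}\bigr)\cup\bigl(N(x_i)\setminus\{y_i\}\bigr)$ has size at most $d(w)+d(x_i)-4$. Hence the residual list $L'(y_i)$ satisfies $\card{L'(y_i)}\ge k+6-d(w)-d(x_i)$, and since $G^2[A]$ is a $K_5$ it is enough to produce a system of distinct representatives for $L'(y_1),\dots,L'(y_5)$; equivalently, to $L'$-colour $K_5$, which is precisely a place where the kernel lemma can be used (via a transitive orientation of $K_5$). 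By Hall's condition on the sorted list sizes, this succeeds provided no $x_i$ is close to maximum degree: if every $d(x_i)$ is $O(\sqrt k)$ --- in particular if every $x_i$ is small --- then $w\in S$ gives $\card{L'(y_i)}\ge k+6-2\sqrt k$, which for $k\ge\Delta_0$ is far larger than $5$, and we are done.

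The step I expect to be the real obstacle is the remaining case, where some $x_i$ has degree close to $\Delta(G)$; this is where Lemmas~\ref{lem:first_red} and \ref{lem:config++} and condition~(2) of the statement come in. Applying Lemma~\ref{lem:first_red} to the edge $wy_i$ --- using $d(y_i)=2$, $d(w)\ge 5$, and that the neighbours of $y_i$ are $w$ and $x_i$ --- forces, for every $i$, that either $x_i$ is big or $w\in N[B]$, which already pins down much of the picture. To forbid two of the $x_i$ from being large, I would exploit condition~(2): any vertex strictly inside a hexagon $vx_iy_iwy_{i+1}x_{i+1}$ is adjacent to $v$, and since $y_i$ and $y_{i+1}$ have degree $2$ such a vertex can touch the hexagon's boundary only at $v$, $w$, $x_i$ or $x_{i+1}$; pushing inward one either exhibits a triangle on $v$ together with some $x_i$ or finds that the hexagon is a face. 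In the first case Lemma~\ref{lem:config++}, applied with $w$ as its small vertex and $y_i$ as the common $2$-neighbour of $w$ and $x_i$, yields $d(x_i)\ge 4$, and combined with the near-disjointness of the $N(x_i)$ and $d(v)\le k$ this lets one show that at most one $x_i$ can be large; a short additional argument then recovers the Hall condition needed above. The heart of the proof is thus this case analysis --- distinguishing whether the hexagons are faces and whether $w$ has a big neighbour, and tracking $4$-cycles throughout --- after which the kernel-lemma colouring of $A$ contradicts the minimality of $G$.
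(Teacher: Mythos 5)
Your overall plan is a coloring reduction (delete $y_1,\ldots,y_5$, color $(G\setminus A)^2$ by minimality, extend via an SDR/kernel argument), whereas the paper's proof is purely structural: it never colors anything, but shows that the configuration itself forces a configuration already forbidden by Lemma~\ref{lem:config++}. That difference would be fine if your reduction closed, but it has a genuine gap exactly where you flag it: the case where some $x_i$ is big. Your proposed resolution does not work as sketched. First, Lemma~\ref{lem:config++} only concludes $d(x_i)\ge 4$ for the apex of a triangle at $v$; that is not in tension with $x_i$ being large, so it cannot by itself limit the number of large $x_i$'s, and the ``short additional argument'' recovering Hall's condition is precisely the missing step. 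Second, even if you knew that at most one $x_i$ were large, that is not enough: if a single $x_i$ has degree close to $k$, then after coloring $(G\setminus A)^2$ the residual list of $y_i$ has size about $k+6-d(w)-d(x_i)$, which can be empty, so Hall's condition fails for that one vertex and the extension to $A$ is impossible. To rescue a coloring argument you would have to uncolor or delete more than the $y_i$'s (for instance the relevant $x_i$ as well, as is done inside the proof of Lemma~\ref{lem:config++}), and you would also lose the easy transfer to correspondence coloring that the paper needs later, since an SDR/kernel extension is not a purely greedy count.

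The observation you are missing, and which makes the paper's argument short, is that the hypotheses already pin down the degrees of the \emph{middle} vertices $x_2,x_3,x_4$ exactly. For $i\in\{2,3,4\}$: since $y_1,\ldots,y_5$ are consecutive neighbors of $w$, $x_i$ is not adjacent to $w$; every neighbor of $x_i$ other than $v$ and $y_i$ lies inside one of the two hexagons at $x_i$, hence is adjacent to $v$ by hypothesis~2 (and $x_{i\pm1}$ are adjacent to $v$ too), so it is a common neighbor of $x_i$ and $v$; absence of 4-cycles allows at most one such common neighbor, giving $d(x_i)\le 3$, while Lemma~\ref{lem:first_red} (via the 2-vertex $y_i$ and $w\in S$) gives $d(x_i)\ge 3$. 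So $d(x_3)=3$, with third neighbor $z$. If $z\in\{x_2,x_4\}$ one gets configuration (b) of Lemma~\ref{lem:config++}; otherwise $z$ lies inside a hexagon, is adjacent to $v$, and any further neighbor $z'$ of $z$ would create the 4-cycle $vx_3zz'$, so $z$ is a 2-vertex and one gets configuration (a). Either way Lemma~\ref{lem:config++} forces $d(x_3)\ge 4$, contradicting $d(x_3)=3$ — with no recoloring at all. In short: your easy case (all $x_i$ small) is fine but not where the content lies, and your hard case is not resolved by the tools you invoke in the way you invoke them.
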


\begin{proof}
We assume that $G$ contains such a configuration and reach a contradiction, by
showing that $G$ contains a configuration forbidden by Lemma~\ref{lem:config++}.
Note that all $x_i$'s are distinct, since $G$ contains no 4-cycle.

Below when we write a statement about $x_i$, we mean that it is true for each
$i\in\{2,3,4\}$.  Since $w\in S$, Lemma~\ref{lem:first_red} implies that
$d(x_i)\ge 3$.  Because $y_1,\ldots,y_5$ are consecutive
neighbors of $w$, vertex $x_i$ is not adjacent to $w$.  Since $G$ has no
4-cycle, $x_i$ has at most one common neighbor with $v$.  Thus $d(x_i)=3$.
Define $z$ so that $N(x_3)=\{v,y_3,z\}$.  If $z\in \{x_2,x_4\}$, then $G$
contains the second configuration in Lemma~\ref{lem:config++}, a contradiction.
If $z$ has a neighbor other than $x_3$ and $v$, then call it $z'$; now $z'$ is
adjacent to $v$ (by hypothesis 2), so $vx_3zz'$ is a 4-cycle, a contradiction.
Thus, $z$ is a 2-vertex with $N(z)=\{x_3,v\}$. 
Now $G$ contains the first configuration in Lemma~\ref{lem:config++}, again a
contradiction.
\end{proof}
  
\subsection{Outline of the proof}
\label{sec4}

Recall that $S$\aside{$S$, $S_i$} is the set of small vertices, and
$S_i$ is the set of small vertices with exactly $i$ big neighbors.
Let \Emph{$G'$} denote the multigraph formed from $G$ by suppressing
every vertex of degree $2$ in $S\setminus N[B]$, and then contracting
every edge between $S_1$ and $B$. (Suppressing a $2$-vertex $v$ means
deleting $v$ and adding an edge between its two neighbors.) Note that
$G'$ may contain loops. For example, there is a loop in $G'$ around a
vertex $u$ if $u$ is a big vertex in $G$ and there is a triangle $uvw$
with $v,w\in S_1$. We say that a vertex of $G$ \emph{disappears} when
constructing $G'$ if it is either a suppressed vertex, or a vertex in
$S_1$.

Let \Emph{$G''$} denote the multigraph formed from $G'$ by removing
every loop, and let \Emph{$G'''$} denote the underlying multigraph of
$G''$, i.e.,~the multigraph formed from $G''$ by deleting the minimal
number of edges to remove all faces of length $2$.  Note that $G'''$
can have parallel edges.  For example, suppose $v$ and $w$ have
parallel edges, say $e_1$ and $e_2$, in $G'$.  If some vertices are
embedded inside and outside of the cycle $e_1e_2$, then in $G'''$
vertices $v$ and $w$ still have parallel edges, with those same
vertices embedded inside and outside of the cycle $e_1e_2$.  However,
$G'''$ cannot have faces of length 2.

An \Emph{$r$-region of $G''$} is a set $\{f_1,\ldots,f_r\}$ of $r$
pairwise distinct faces of length $2$ such that:
\begin{itemize}
\item For $1\leqslant i<r$, $f_i$ shares one edge with $f_{i+1}$. (We
  say that the $f_i$'s are \Emph{consecutive}.)
\item All the $f_i$'s have the same vertices $b_1,b_2$ on their
  boundary, where $b_1$ and $b_2$ are distinct vertices of $B$.
\end{itemize}
Note that each of the faces in an $r$-region is constructed from some
cycle of $G$ when we apply the construction rules above.  By
extension, an \Emph{$r$-region of $G$} is the subgraph of $G$ induced
by the vertices of these cycles, together with those lying on the
inside of those cycles.  (We often simply write \Emph{region}, when
the specific value of $r$ is less important.)  When $R$ is an
$r$-region of $G$, we say that $r$ is the size of $R$, and we denote
by \Emph{$V(R)$} the set of vertices appearing on all faces of $R$,
excluding $b_1$ and $b_2$.

To reach a contradiction, we prove the following two propositions.

\begin{prop}
  \label{prop:bigreg}
  $G$ contains an $r$-region with $r\geqslant\frac{\sqrt{k}}{50}-37$.
\end{prop}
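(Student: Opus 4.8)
\textbf{Proof strategy for Proposition~\ref{prop:bigreg}.}
The plan is a global discharging argument on the multigraph $G''$ (and its simplification $G'''$), feeding off the structural lemmas already proved. First I would set up charges so that the total is negative: assign each vertex $u$ of $G''$ the charge $d_{G'''}(u)-6$ and each face $f$ of $G'''$ the charge $2\ell(f)-6$, so that Euler's formula on the plane multigraph $G'''$ gives total charge $-12$ (the usual $6$-$2\ell$ scheme for sparse planar graphs, rescaled). The key point is that $G'''$ is genuinely sparse: every vertex of $G'''$ comes from a big vertex of $G$ or from a vertex of $S\setminus N[B]$ that survived suppression/contraction, and Lemmas~\ref{lem:first_red}, \ref{lem:notriangle}, \ref{lem:config++}, and especially \ref{lem:smallfacesaux} force each small vertex to have few incident structures. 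In particular, Lemma~\ref{lem:smallfacesaux} says a small vertex $w$ cannot have five consecutive ``good'' neighbors of the special type, which is exactly what bounds the contribution of each small vertex to the regions around it.

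Next I would argue that if $G$ contains \emph{no} large $r$-region, i.e.\ every maximal run of consecutive length-$2$ faces between a fixed pair $b_1,b_2\in B$ has size less than $\frac{\sqrt k}{50}-37$, then $G''$ is close enough to $G'''$ that the discharging on $G'''$ can be pushed back to $G''$ and then to $G$ without losing too much. Concretely, each $r$-region with small $r$ contributes only $O(r)$ extra edges between its two endpoints, and since $r$ is bounded by roughly $\sqrt k/50$, the number of ``absorbed'' parallel edges per big vertex is controlled; summing over the $\le d_G(b)$ faces at a big vertex $b$, the total charge that $b$ must donate to pay for nearby small vertices and short regions stays below its budget $d_{G'''}(b)-6$, using $d_{G'''}(b)\le d_G(b)$ and $d_G(b)\ge\sqrt k$ with $k\ge\Delta_0$ huge. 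Each small vertex of $G'''$, by the reducibility lemmas, ends with nonnegative charge after receiving a fixed amount from each big neighbor; each face of $G'''$ has length $\ge 3$ so starts nonnegative. This contradicts the total being $-12$.

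The main obstacle I anticipate is the bookkeeping at a big vertex: one must carefully account for how the suppression of degree-$2$ vertices in $S\setminus N[B]$ and the contraction of $S_1$-to-$B$ edges redistribute degree, and then show that the ``lost'' faces of length $2$ removed in passing from $G''$ to $G'''$ do not hide too much negative charge. This is where the numerical constant $\frac{\sqrt k}{50}-37$ comes from: it is the threshold at which a single region is forced to be large enough that we are already done, so the case analysis only has to handle regions below it, and there the per-region overhead ($\le$ roughly $r$ edges, times a constant from the discharging rules) fits inside the slack $\frac25 d_G(b)-4\ge\frac25\sqrt k-4$ that a big vertex has. I would organize the write-up as: (1) define the charges; (2) state the discharging rules pushing charge from big vertices to small vertices and short faces; (3) verify small vertices and faces end nonnegative using Lemmas~\ref{lem:first_red}--\ref{lem:smallfacesaux}; (4) verify big vertices end nonnegative \emph{under the assumption that no region has size $\ge\frac{\sqrt k}{50}-37$}, reaching the contradiction with the global sum $-12$.
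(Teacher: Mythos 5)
Your plan is not the paper's route, and it has a genuine gap that I do not think can be repaired as sketched. The central problem is the source of charge at big vertices. In your scheme the charge of a vertex $u$ is $d_{G'''}(u)-6$, so the only budget a big vertex $b$ has is its degree \emph{in $G'''$}, not in $G$. When you invoke the slack ``$\frac25 d_G(b)-4\ge\frac25\sqrt k-4$'' you are importing the Section~\ref{sec2} computation, which lives on $G$; it is unavailable here, because exactly the operations that build $G'$, $G''$, $G'''$ (contracting $S_1$--$B$ edges, deleting loops, and collapsing parallel edges into single edges) can shrink $d_{G'''}(b)$ to a bounded number even though $d_G(b)\ge\sqrt k$. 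Indeed, the proposition is essentially the statement that for a suitable big vertex this collapse is dramatic. The paper's proof is a short local pigeonhole built from three ingredients for which your outline has no substitute: (i) Lemma~\ref{lem:1040} supplies a big vertex $b_1$ with $d_{G'''}(b_1)\le 40$ and at most $10$ big neighbors in $G'''$; (ii) Lemma~\ref{lem:adjacentloops} and Corollary~\ref{cor:degrees} control the loops of $G'$ (an issue your sketch never mentions) and give $d_{G''}(b_1)\ge\frac{\sqrt k}{5}$; (iii) Lemma~\ref{lem:smallfaces} shows that small $G'''$-neighbors of $b_1$ carry at most $9$ parallel edges each, hence at most $9\cdot 40=360$ in total, so some big neighbor $b_2$ carries at least $\frac{\sqrt k/5-360}{10}\ge\frac{\sqrt k}{50}-36$ consecutive parallel edges, i.e.\ an $r$-region with $r\ge\frac{\sqrt k}{50}-37$.

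Second, the logical shape of your discharging cannot produce the intended contradiction. The total charge $-12$ is an unconditional consequence of Euler's formula for the plane multigraph $G'''$, and since $G'''$ has no loops and no $2$-faces, its faces have length at least $3$ and hence nonnegative charge in your scheme; all the deficit sits on vertices of $G'''$-degree at most $5$. Nothing in the hypothesis ``no region of size at least $\frac{\sqrt k}{50}-37$'', nor in Lemmas~\ref{lem:first_red}--\ref{lem:smallfacesaux}, prevents $G'''$ from containing many small vertices of degree $3$, $4$, or $5$ whose neighbors themselves have bounded $G'''$-degree, so there is no identified reservoir from which your rules could pay them: your steps (3) and (4) would have to manufacture charge that simply is not present in $G'''$, because the large $G$-degrees of big vertices do not survive into the charge function you chose. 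The missing idea is precisely the one the paper uses: find one big vertex whose degree in $G'''$ is bounded while its degree in $G''$ is $\Omega(\sqrt k)$, and pigeonhole its parallel edges onto a big neighbor --- a local counting argument rather than a global discharging.
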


\begin{prop}
  \label{prop:smallreg}
  $G$ does not contain any $r$-region for $r\geq 475353$.
\end{prop}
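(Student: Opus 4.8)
The plan is to bound the total ``charge'' that an $r$-region $R$ of $G$ can absorb by analyzing its internal structure, and to show that if $r$ is too large then $R$ must contain a configuration forbidden by one of the earlier reducibility lemmas — most likely Lemma~\ref{lem:smallfacesaux}, which was custom-built for exactly this purpose. Recall that each face of the $r$-region in $G''$ comes from a cycle of $G$ through the two big vertices $b_1,b_2$, and the region in $G$ is everything inside those cycles. So $R$ looks like a ``fan'' of consecutive cycles sharing $b_1,b_2$, with small vertices and degree-2 vertices packed inside and along the shared boundary. The first step is to set up notation for this fan: write $b_1=v$, $b_2=w$ (one of them small, one big, or both big — distinguish cases), list the internal structure face-by-face, and identify, between consecutive shared edges, the chains of suppressed $2$-vertices and $S_1$-vertices that were collapsed during the construction of $G'$ and $G''$.

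Next I would argue that the internal structure is highly constrained. Since $G$ has no $4$-cycles, any two of the defining cycles that share an edge cannot also share a second edge, which forces the ``widths'' of the cells to be controlled; moreover Lemmas~\ref{lem:first_red}, \ref{lem:notriangle}, and especially \ref{lem:config++} restrict how degree-$2$ and degree-$3$ vertices can sit relative to the two boundary vertices. The key quantitative step is: if $r$ is large, then along one of the two boundary vertices — say $w$ (chosen small if possible, which is where the constant $475353$ and the $\sqrt k$ threshold in Proposition~\ref{prop:bigreg} will interact) — there are five consecutive neighbors $y_1,\dots,y_5$ of $w$ of degree two, each sharing a neighbor $x_i$ with $v$, with everything inside the cells between them adjacent to $v$. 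That is precisely the configuration of Lemma~\ref{lem:smallfacesaux}, which is forbidden. To extract this, I would use a counting/pigeonhole argument over the $r$ cells: most cells must be ``thin'' (their interior consists of short $v$-fans plus a single degree-$2$ vertex on the $w$-side), and a long run of thin cells yields the five consecutive degree-$2$ neighbors of $w$. The bookkeeping that turns ``$r\geq 475353$'' into ``five consecutive good cells exist'' is where I expect to lose a large constant factor, hence the large explicit bound.

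The main obstacle, and the part requiring genuine care rather than routine calculation, is handling the cells whose interior is \emph{not} thin — i.e. cells containing extra vertices adjacent to $v$, or $S_1$-vertices that were contracted into $v$ or $w$ when passing to $G'$. I would need to show each such ``fat'' cell still contributes a bounded amount of structure (bounded degree into the region, bounded number of such cells forced per unit of $r$, or that a fat cell itself already exposes a forbidden configuration from Lemma~\ref{lem:config++} or Lemma~\ref{lem:first_red}). A secondary subtlety is the role of $G''$ versus $G'''$: loops (triangles $uvw$ with $v,w\in S_1$) and $2$-faces get removed in the passage to $G'''$, so I must make sure that an $r$-region — defined in $G''$ — genuinely corresponds to $r$ distinct cycles in $G$ with disjoint-enough interiors, so that the five-neighbor count along $w$ is not spoiled by cells that ``telescope'' onto each other. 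Once every cell is classified as thin or fat and fat cells are bounded, a straightforward pigeonhole on $r\geq 475353$ produces five consecutive thin cells sharing the vertex $w$, hence the forbidden configuration of Lemma~\ref{lem:smallfacesaux}, giving the contradiction.
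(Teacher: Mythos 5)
Your proposal takes a purely structural route: you try to show that a large $r$-region must contain the configuration forbidden by Lemma~\ref{lem:smallfacesaux} (five consecutive $2$-neighbors of a vertex $w$, each with a common neighbor with $v$), via a pigeonhole over ``thin cells.'' This cannot work, for two concrete reasons. First, Lemma~\ref{lem:smallfacesaux} requires its hub vertex $w$ to be \emph{small} (its proof colors the $2$-vertices using that they have at most $\sqrt{k}+3$ colored $G^2$-neighbors, which fails badly if $w$ is big), whereas the two hub vertices $b_1,b_2$ of an $r$-region are by definition both big. The consecutive $2$-faces of the region are exactly ``many internally disjoint short paths between two \emph{big} vertices,'' and no earlier structural lemma forbids that. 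Second, and decisively, Proposition~\ref{prop:bigreg} \emph{proves} that $G$ does contain regions of size about $\sqrt{k}/50$; so no argument that only inspects the local structure of the region (without invoking the minimality of $G$ as a counterexample and recoloring) can show that large regions are absent. The earlier configuration lemmas are used only to control the region's internal shape --- e.g.\ Lemma~\ref{lem:fewedges}, which applies Lemma~\ref{lem:smallfacesaux} at the \emph{small} vertices of $B_1\cup B_2$ to show each has at most one neighbor in each $B_i$ and at most eight in $D$ --- not to exclude large regions outright.

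The missing idea is that Proposition~\ref{prop:smallreg} is a \emph{reducibility} statement, proved by a coloring argument. In the paper one picks an edge $v_1v_2$ deep inside the region, uses minimality to $L$-color $(G-v_1v_2)^2$, and uncolors the interior $B'_1\cup B'_2\cup D$ (with $B'_i=B_i\setminus N[T]$, where $T$ is the at most $4$ boundary vertices). By Lemma~\ref{lem:fewedges} the square of the region restricted to $B'_1\cup B'_2$ is two cliques with at most $11$ cross-neighbors per vertex, and the residual lists have size at least $|B'_i|-44$ (size $|B'_i|$ off a small exceptional set $T_i$ of at most $4400$ vertices). One then recolors this two-clique graph using the kernel-perfect orientation argument of Lemma~\ref{lem:const} (via the Bondy--Boppana--Siegel Lemma~\ref{lem:digraph}), and finally colors the $2$-vertices of $D$ greedily since each has at most $2\sqrt{k}$ forbidden colors. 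The threshold $475353$ arises from needing $|B'_1|\geq r/9-6\geq 52811$ to satisfy the hypotheses of Lemma~\ref{lem:const}, not from a pigeonhole count of thin versus fat cells. Your sketch contains none of this machinery (no use of minimality, no uncoloring, no kernel lemma), so it does not establish the proposition.
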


Our contradiction now comes quickly.  These propositions give that
$\frac{\sqrt{k}}{50}-37 < 475353$.  This inequality implies
$k<23769500^2$, contradicting the hypothesis $k\geq
\Delta_0=23769500^2$.

We will devote a subsection to the proof of each
proposition: Subsection~\ref{subsec:bigreg} for
Proposition~\ref{prop:bigreg} and Subsection~\ref{subsec:smallreg} for
Proposition~\ref{prop:smallreg}. In Subsection~\ref{subsec:struct}, we
prove structural lemmas about the regions in $G$.

\subsection{Structure of Regions}
\label{subsec:struct}
We now classify each edge of $G'$ based on its corresponding path (or
cycle) in $G$.  An edge $e$ in $G'$ \EmphE{corresponds to a path or
  cycle $x_1\cdots x_n$ in $G$}{-.4cm} if $e=x_1x_n$ and for each
$i\in\{2,\ldots, n-1\}$, one of the following holds:
\begin{itemize}
\item $x_i$ is a 2-vertex in $G$ and $x_{i-1},x_{i+1}\in S$, or
\item $x_i\in S_1$ and either $x_{i-1}$ or $x_{i+1}$ lies in
  $B$. 
\end{itemize}

Due to the construction of $G'$, for every loop (resp.~non-loop edge)
$e$ of $G'$, there is a unique cycle (resp.~path) $x_1\cdots x_n$ in
$G$ corresponding to $e$ (with possibly $n=2$). Note that we used here
that the suppressed 2-vertices are not in $N[B]$, hence every
contracted edge (between $S_1$ and $B$) is between two adjacent
vertices in $G$.

The following lemma ensures that every edge (resp.~loop) of $G'$
corresponds to a short path (resp.~cycle) of $G$. It also gives a
classification of all the possible such paths (resp.~cycles), depicted
in Figure~\ref{fig:types}.

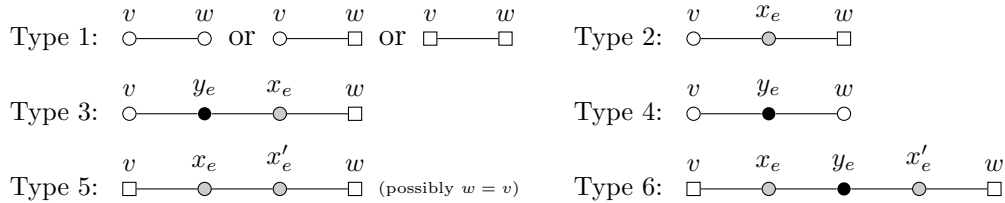
\begin{figure}[!ht]
  \centering
  \begin{tikzpicture}[
      b/.style={draw=black,minimum size = 5pt,rectangle,inner sep=1pt},
      s/.style={fill=black,minimum size = 5pt,ellipse,inner sep=1pt},
      s2/.style={draw=black,minimum size = 5pt,ellipse,inner sep=1pt},
      s1/.style={draw=black,minimum size = 5pt,ellipse,inner sep=1pt,fill=black!20},
    ]
    \node at (-1,0) {\footnotesize Type 1:};
    \node[s2,label=above:\footnotesize{$v$}] (u) at (0,0)  {};
    \node[s2,label=above:\footnotesize{$w$}] (v) at (1,0) {};
    \draw (u) -- (v);
    \node at (1.5,0) {or};
    \node[s2,label=above:\footnotesize{$v$}] (u) at (2,0)  {};
    \node[b,label=above:\footnotesize{$w$}] (v) at (3,0) {};
    \draw (u) -- (v);
    \node at (3.5,0) {or};
    \node[b,label=above:\footnotesize{$v$}] (u) at (4,0)  {};
    \node[b,label=above:\footnotesize{$w$}] (v) at (5,0) {};
    \draw (u) -- (v);
    \tikzset{yshift=-1cm}
    \node at (-1,0) {\footnotesize Type 3:};
    \node[s2,label=above:\footnotesize{$v$}] (u) at (0,0)  {};
    \node[s,label=above:\footnotesize{$y_e$}] (ye) at (1,0) {};
    \node[s1,label=above:\footnotesize{$x_e$}] (xe) at (2,0) {};
    \node[b,label=above:\footnotesize{$w$}] (v) at (3,0) {};
    \draw (u) -- (ye) -- (xe) -- (v);
    \tikzset{yshift=-1cm}
    \node at (-1,0) {\footnotesize Type 5:};
    \node[b,label=above:\footnotesize{$v$}] (u) at (0,0)  {};
    \node[s1,label=above:\footnotesize{$x_e$}] (xe) at (1,0) {};
    \node[s1,label=above:\footnotesize{$x'_e$}] (x'e) at (2,0) {};
    \node[b,label=above:\footnotesize{$w$}] (v) at (3,0) {};
    \node at (4.25,0) {\tiny (possibly $w=v$)};
    \draw (u) -- (xe) -- (x'e) -- (v);
    \tikzset{xshift=7.5cm,yshift=2cm}
    \node at (-1,0) {\footnotesize Type 2:};
    \node[s2,label=above:\footnotesize{$v$}] (u) at (0,0)  {};
    \node[s1,label=above:\footnotesize{$x_e$}] (xe) at (1,0) {};
    \node[b,label=above:\footnotesize{$w$}] (v) at (2,0) {};
    \draw (u) -- (xe) -- (v);
    \tikzset{yshift=-1cm}
    \node at (-1,0) {\footnotesize Type 4:};
    \node[s2,label=above:\footnotesize{$v$}] (u) at (0,0)  {};
    \node[s,label=above:\footnotesize{$y_e$}] (ye) at (1,0) {};
    \node[s2,label=above:\footnotesize{$w$}] (v) at (2,0) {};
    \draw (u) -- (ye) -- (v);
    \tikzset{yshift=-1cm}
    \node at (-1,0) {\footnotesize Type 6:};
    \node[b,label=above:\footnotesize{$v$}] (u) at (0,0)  {};
    \node[s1,label=above:\footnotesize{$x_e$}] (xe) at (1,0) {};
    \node[s,label=above:\footnotesize{$y_e$}] (ye) at (2,0) {};
    \node[s1,label=above:\footnotesize{$x'_e$}] (x'e) at (3,0) {};
    \node[b,label=above:\footnotesize{$w$}] (v) at (4,0) {};
    \draw (u) -- (xe) -- (ye) -- (x'e) -- (v);
  \end{tikzpicture}
  \caption{The six types of paths/cycles in $G$ that create edges in
    $G'$ (gray vertices lie in $S_1$).}
  \label{fig:types}
\end{figure}

\begin{lem}
  \label{lem:struct_edge}
  Each edge $e=vw$ of $G'$ corresponds to a path or a cycle in $G$ for
  which exactly one of the following six conditions holds (up to
  exchanging $v$ with $w$).  If $e$ satisfies condition $i$ below (for
  some $i\in [6]$), then we say that $e$ has \emph{type $i$}\aside{type $i$}.  
  If $v\in S$, then $e$ has one of types 1--4.  If $e$ is a loop of $G'$,
  then $e$ has type $5$.  Finally if $v,w\in B$, then $e$ has type 1,
  5, or 6.
  \begin{enumerate}
  \item $e\in E(G)$.
  \item $w\in B$ and $e$ corresponds to a path $vx_ew$ in $G$ with
    $x_e\in S_1$.
  \item $w\in B$ and $e$ corresponds to a path $vy_ex_ew$ in $G$ with
    $x_e\in S_1$ and $\deg(y_e)=2$.
  \item $w\in S$ and $e$ corresponds to a path $vy_ew$ in $G$ with
    $\deg(y_e)=2$.
  \item $e$ corresponds to a path or cycle $vx_ex'_ew$ in $G$ with
    $x_e,x'_e\in S_1$.
  \item $e$ corresponds to a path $vx_ey_ex'_ew$ in $G$ with
    $x_e,x'_e\in S_1$ and $\deg(y_e)=2$.
  \end{enumerate}
\end{lem}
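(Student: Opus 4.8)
The plan is to carefully unwind the two-step construction of $G'$: first suppressing all $2$-vertices of $S\setminus N[B]$, then contracting all edges between $S_1$ and $B$. An edge (or loop) $e$ of $G'$ therefore corresponds to a path $x_1\cdots x_n$ in $G$ where every internal vertex $x_i$ is either a $2$-vertex of $S\setminus N[B]$ that was suppressed, or a vertex of $S_1$ whose incident $S_1$--$B$ edge was contracted. I would start by writing out the alternation structure of this internal path: a suppressed $2$-vertex has both its (distinct, by Lemma~\ref{lem:notriangle}) neighbors in $G$, and those neighbors lie in $S$ (a suppressed vertex is not in $N[B]$, so in particular it has no big neighbor); a contracted $S_1$-vertex $x_i$ has exactly one big neighbor, which must be one of $x_{i-1},x_{i+1}$. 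The first observation is that two suppressed $2$-vertices cannot be consecutive internal vertices without creating a contradiction — actually they can be consecutive in the path, but I need to bound how many times this happens — so the real work is bounding $n$.

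The key step is the length bound: I claim $n\le 5$, i.e.~at most three internal vertices, and moreover the pattern of "suppressed $2$-vertex" versus "$S_1$-vertex" along the internal path is one of a short finite list. Here the $4$-cycle-free hypothesis does the heavy lifting, together with Lemmas~\ref{lem:first_red}, \ref{lem:notriangle}, and \ref{lem:config++}. The cleanest way is to consider the maximal run of consecutive suppressed $2$-vertices: such a run $y,y',y'',\ldots$ in $G$ forms a path all of whose vertices have degree $2$, but Lemma~\ref{lem:first_red} forbids two adjacent $2$-vertices both outside $N[B]$ (and suppressed vertices are outside $N[B]$), so each run has length exactly $1$. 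Thus suppressed $2$-vertices alternate with $S_1$-vertices (or with the endpoints $v,w$). Next I bound the number of consecutive $S_1$-vertices: if $x_i,x_{i+1}\in S_1$ are consecutive internal vertices, each needs its unique big neighbor among its path-neighbors; $x_i$'s big neighbor is not $x_{i+1}\in S$, so it is $x_{i-1}$, and symmetrically $x_{i+1}$'s big neighbor is $x_{i+2}$ — so a run of $S_1$-vertices has length at most $2$, and a run of length $2$ forces both endpoints of that run's flanking vertices to be big. Combining: the internal path alternates, with $S_1$-runs of length $\le 2$ and $2$-vertex-runs of length $1$, and the endpoints $v,w$ constrain which end a $2$-vertex can sit at (a $2$-vertex's neighbors lie in $S$, so it cannot be adjacent to a vertex of $B$; in particular if $w\in B$ the internal vertex adjacent to $w$ is in $S_1$). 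A short case analysis on the endpoint types $(v\in S$ or $B$, $w\in S$ or $B)$ then yields exactly the six listed types; the $4$-cycle-free condition rules out the would-be longer combinations (e.g.~$vy_ex_ex'_ey'_ew$ would contain a configuration contradicting Lemma~\ref{lem:config++} or an actual $4$-cycle once one uses that a $2$-vertex in the middle has both neighbors in $S_1\subseteq S$).

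Finally I would nail down the three structural refinements in the statement: (i) if $v\in S$, then since $v$ is not big and $e$'s path cannot have a $2$-vertex adjacent to $v$ unless the next vertex is also in $S$, one checks the path has type $1$--$4$ — essentially because a type-$5$ or type-$6$ path has $x_e\in S_1$ adjacent to $v$, forcing $v\in B$ after contraction, contradiction; (ii) a loop of $G'$ comes from a cycle of $G$, and a cycle can only appear when both "ends" were identified by contraction, i.e.~both end-vertices of the corresponding cycle were $S_1$-vertices glued to the same big vertex — that is precisely type $5$ with $v=w$; (iii) if $v,w\in B$, then neither endpoint can be adjacent (in the path) to a suppressed $2$-vertex, so the internal vertices are all in $S_1$, giving a run of $S_1$-vertices of length $0$, $1$ — wait, length $0$ gives type $1$, length $2$ gives type $5$, and the single remaining possibility with a $2$-vertex squeezed between two $S_1$-vertices is type $6$; type $2$, $3$, $4$ are excluded because they have an $S$-endpoint. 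I expect the main obstacle to be the bookkeeping in this last case analysis — making sure every alternation pattern consistent with the run-length bounds either appears in the list of six or is killed by a $4$-cycle/Lemma~\ref{lem:config++} argument — rather than any single hard idea; the conceptual content is entirely in the two run-length bounds, both of which are immediate from Lemma~\ref{lem:first_red} and the absence of $4$-cycles.
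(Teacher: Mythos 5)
Your plan follows essentially the same route as the paper: classify the internal vertices of the corresponding path/cycle as suppressed $2$-vertices or $S_1$-vertices whose big path-neighbor must be an endpoint, use Lemma~\ref{lem:first_red} to forbid two consecutive suppressed vertices (so the path has length at most $4$), and finish with a case analysis on the endpoint types, using simplicity and $C_4$-freeness to rule out the degenerate loop/length-$4$ possibilities. The only quibbles are cosmetic: the pattern $vy_ex_ex'_ey'_ew$ is already excluded by your own run-length analysis (no appeal to Lemma~\ref{lem:config++} is needed), and in the case $v,w\in B$ you should also note explicitly that $vx_ew$ is impossible because $x_e\in S_1$ would then have two big neighbors.
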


\begin{proof}
  Due to the construction of $G'$, each edge $e$ in $G'$ between $v$
  and $w$ comes from a path (or cycle) $P_e$ in $G$ between $v$ and
  $w$. In particular, every internal vertex of $P_e$ is either a
  2-vertex in $S\setminus N[B]$ or a vertex of $S_1$ which is preceded
  or followed in $P_e$ by a big vertex. This implies that each
  internal vertex of $P_e$ is small, and that the only vertices of
  $P_e$ that can be big are $v$ and $w$.

  By Lemma~\ref{lem:first_red}, no two consecutive vertices of $P_e$
  are suppressed. This implies that $P_e$ has length at most four.
  \begin{itemize}
  \item If $P_e$ has length one, then $e$ has type 1.
  \item If $P_e$ has length two, then we have $v\neq w$ since $G$ is
    simple. Denote by $x$ the middle vertex of $P_e$. We must have
    either $v,w\in S$ and $\deg_G(x)=2$ (case 4), or $v\in B$,
    $w\in S$ and $x\in S_1$ (case 2).
  \item If $P_e$ has length three, then at least one of $v,w$ must be
    in $B$ and its neighbor in $P_e$ must be in $S_1$. If both $v$ and
    $w$ lie in $B$, then we are in case 5; otherwise, we have
    $v\neq w$ and we are in case 3.
  \item Finally, if $P_e$ has length four, then we have $v\neq w$ since $G$
    is $C_4$-free. Moreover, they both have to be big and their
    neighbors in $P_e$ (say $x_e,x'_e$) lie in $S_1$. The other vertex
    $y_e$ of $P_e$ must have degree two, so we are in case 6.
  \end{itemize}
  Observe in particular that if $v$ is small, then cases 5 and 6
  cannot occur. Moreover, if $v$ and $w$ are big, then only cases 1,
  5, and 6 can occur. Finally, every loop of $G'$ has type 5.
\end{proof}

In what follows, when referring to an edge $e$ with type $i$, we use
$x_e$, $x'_e$, and $y_e$ as defined in the corresponding part of
Lemma~\ref{lem:struct_edge}.
This lemma implies the following facts about the structure of regions
in $G$.
\begin{cor}
  \label{cor:forme_region}
  Let $R$ be a region of $G$. Then $V(R)$ is the disjoint union of
  three sets $B_1,B_2,D$ such that $B_i\subset N(b_i)$ for some
  $b_1,b_2\in B$, and $D$ is an independent set of $2$-vertices, each
  with a neighbor in each of $B_1$ and $B_2$.
\end{cor}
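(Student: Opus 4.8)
The plan is to unwind the definitions and push everything through Lemma~\ref{lem:struct_edge}. Recall that a region $R$ of $G$ comes from an $r$-region $\{f_1,\dots,f_r\}$ of $G''$: consecutive faces of length $2$, all bounded by non-loop edges of $G''$ joining two fixed distinct big vertices $b_1,b_2$. List these boundary edges as $e_0,\dots,e_r$, so that $f_i$ is bounded by $e_{i-1}$ and $e_i$. Each $e_j$ is a non-loop edge of $G'$ with both endpoints big, so Lemma~\ref{lem:struct_edge} tells us it has type $1$, $5$, or $6$ and corresponds to a path $P_{e_j}$ of $G$ from $b_1$ to $b_2$, whose internal vertices are: none (type~$1$); $x_{e_j}\in S_1\cap N(b_1)$ and $x'_{e_j}\in S_1\cap N(b_2)$ (type~$5$); or $x_{e_j},x'_{e_j}$ as above together with a $2$-vertex $y_{e_j}$ adjacent to both (type~$6$). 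Since $G$ has no $4$-cycle, the internal vertices of distinct $P_{e_j}$'s are pairwise distinct; in particular each $f_i$ is carved out of a genuine cycle $C_i=P_{e_{i-1}}\cup P_{e_i}$ of $G$, bounding a disk $\Delta_i$.

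Now set $B_1=\{x_{e_j}\}$ and $B_2=\{x'_{e_j}\}$ over all type-$5$ or type-$6$ edges $e_j$, and $D=\{y_{e_j}\}$ over all type-$6$ edges, so that the union of the internal vertices of $P_{e_0},\dots,P_{e_r}$ equals $B_1\cup B_2\cup D$. This already has the required structure: $B_1\subseteq N(b_1)$ and $B_2\subseteq N(b_2)$; each $y_{e_j}$ is a $2$-vertex adjacent to $x_{e_j}\in B_1$ and $x'_{e_j}\in B_2$; $D$ is independent because the only neighbours of any $y_{e_j}$ are the two vertices $x_{e_j},x'_{e_j}$, which lie in $S_1$; and $B_1,B_2,D$ are pairwise disjoint since $B_1,B_2\subseteq S_1$ while $D\subseteq S\setminus N[B]$, and a vertex of $S_1$ has a unique big neighbour (and $b_1\neq b_2$), so it cannot lie in both $B_1$ and $B_2$.

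It remains to see that $V(R)$ contains nothing beyond these path-internal vertices, i.e., that no vertex of $G$ lies strictly inside any $\Delta_i$. The only vertices of $G$ that disappear when passing to $G''$ are the suppressed $2$-vertices of $S\setminus N[B]$ and the vertices of $S_1$; every other vertex survives in $G''$, so a surviving vertex strictly inside $\Delta_i$ would lie strictly inside the face $f_i$, which is absurd. Thus an interior vertex $u$ lies in $S_1$ or is a suppressed $2$-vertex. If $u\in S_1$, its unique big neighbour lies in $\overline{\Delta_i}$, cannot be strictly inside (big vertices survive), and hence is $b_1$ or $b_2$; but contracting $u$ into that vertex then places inside $\Delta_i$ either a chord of $C_i$ or a parallel $b_1b_2$-edge, in either case subdividing $f_i$, a contradiction. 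If $u$ is a suppressed $2$-vertex, then $u\notin N[B]$, so by Lemma~\ref{lem:first_red} both neighbours of $u$ lie in $N[B]\setminus B$; these are small vertices with a big neighbour, so (having already excluded interior $S_1$-vertices, and since no big vertex lies strictly inside) both neighbours of $u$ lie on $C_i$, hence are among the $x_{e_j}$'s and $x'_{e_j}$'s. Checking the few possibilities — both attached to the same $b_t$, or to $x_e,x'_e$ of a single type-$6$ path, or to $x$ and $x'$ of two different paths — each either yields a $4$-cycle in $G$ or, after suppressing $u$ and contracting, a parallel $b_1b_2$-edge inside $f_i$, again a contradiction. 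Hence no interior vertex exists and $V(R)=B_1\cup B_2\cup D$.

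I expect the main obstacle to be this last paragraph: it relies on delicate bookkeeping of how the fixed plane embedding transforms under the suppression and contraction operations, in particular the assertion that vertices or edges sitting inside the disk $\Delta_i$ genuinely end up inside the face $f_i$ of $G''$. Everything else is a straightforward consequence of Lemma~\ref{lem:struct_edge}.
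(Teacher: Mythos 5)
Your first two paragraphs match the paper's strategy (classify the $b_1b_2$-edges of $R$ via Lemma~\ref{lem:struct_edge} as types 1, 5, 6 and read off $B_1$, $B_2$, $D$ from the corresponding paths), but the final step contains a genuine gap. You claim that no vertex of $G$ lies strictly inside any disk $\Delta_i$, and in the case $u\in S_1$ you argue that contracting $u$ into its big neighbour would place ``a chord of $C_i$ or a parallel $b_1b_2$-edge'' inside $\Delta_i$, contradicting that $f_i$ is a face of $G''$. This case analysis misses the possibility that the resulting edge is a \emph{loop} at $b_1$ or $b_2$. For instance, a triangle $b_1uu'$ with $u,u'\in S_1$ drawn inside $\Delta_i$ (exactly the situation the paper mentions when explaining why $G'$ may have loops) produces, after contracting $ub_1$ and $u'b_1$, a loop of $G'$ around $b_1$ lying inside $\Delta_i$; since loops are deleted when passing from $G'$ to $G''$, the region between $e_{i-1}$ and $e_i$ remains a length-2 face of $G''$, and no contradiction arises. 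So vertices of $G$ strictly inside the disks can exist, your claim is false, and your $B_1,B_2$ (built only from the internal vertices of the boundary paths $P_{e_0},\dots,P_{e_r}$) need not exhaust $V(R)$.

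The paper's proof avoids this by not trying to exclude interior vertices at all: it observes that every edge of $G'$ ``appearing in $R$'' is either a $b_1b_2$-edge or a loop on $b_1$ or $b_2$, defines $B_i$ as \emph{all} vertices of $G$ whose edge to $b_i$ is contracted in forming these edges (so both $S_1$-vertices of an interior type-5 loop at $b_1$ simply land in $B_1$), and defines $D$ as the suppressed vertices. Since loops have type 5 (a type-6 loop would force a 4-cycle), no suppressed vertex hides inside, $D$ consists of 2-vertices coming only from type-6 boundary paths, and the disjointness argument you already have (uniqueness of the big neighbour of an $S_1$-vertex, and $D\cap N[B]=\varnothing$) goes through verbatim. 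If you replace your last paragraph by this bookkeeping — i.e., enlarge $B_1$ and $B_2$ to absorb the loop vertices instead of proving they do not exist — your proof becomes essentially the paper's.
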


\begin{proof}
  Let $R$ be a region of $G$. By definition, there exists
  $b_1,b_2\in B$ on the boundary of every face of $R$ in
  $G''$. Therefore, in $G'$, the edges appearing in $R$ are either
  loops on $b_1$ or $b_2$ or edges between $b_1$ and $b_2$.

  Note that $V(R)$ is the set of all vertices of $G$ that disappear
  when we construct the edges of $R$ in $G'$.  For each $i\in \{1,2\}$,
  define $B_i$ as the set of vertices $v$ of $G$ such that $vb_i$ is
  contracted when constructing an edge of $R$ in $G'$. We also define
  $D$ as the set of vertices in $G$ that are suppressed when
  constructing an edge of $R$ in $G'$. By definition, we have
  $B_i\subset N(b_i)$

  By Lemma~\ref{lem:struct_edge}, since $b_1,b_2\in B$, each edge $e$
  between $b_1$ and $b_2$ in $G'$ has type 1, 5, or 6, and each loop
  around $b_1,b_2$ has type $5$. This ensures that
  $V(R)=B_1\cup B_2\cup D$ and that $D$ contains only vertices of
  degree $2$ in $G$. Using again Lemma~\ref{lem:struct_edge}, this
  implies that $D$ is an independent set.

  It remains to show that these sets are pairwise disjoint. Assume
  that there is $x\in B_1\cap B_2$. Now $xb_1$ and $xb_2$ are both
  contracted when constructing $G'$. This requires that $x\in
  S_1$. Since $b_1$ and $b_2$ are both big, we must have $b_1=b_2$, a
  contradiction.
  Further, since $b_1\in B$, no neighbor of $b_1$ is suppressed
  during the construction of $G'$. Since $B_1\subset N(b_1)$, we
  thus have $D\cap B_1=\varnothing$. By symmetry, we also have
  $D\cap B_2=\varnothing$.
\end{proof}

In the following, given a region $R$, we use the notation of
Corollary~\ref{cor:forme_region}.

\subsection{Proof of Proposition~\ref{prop:bigreg}: $G$ has Large Regions}
\label{subsec:bigreg}

Our goal in this subsection is to find a large region in $G$. To this
end, we look for a large set of consecutive faces of length $2$ in
$G'$. We first recall a result from~\cite{BCP} (Lemma~3.6 in that paper)
allowing us to find a vertex in $G'$ with few neighbors in $G'''$.

\begin{lem}[\cite{BCP}]
  \label{lem:1040}
  There exists $b_1\in B$ such that $\deg_{G'''}(b_1)\leq 40$ and
  $\deg_{G'''[B]}(b_1) \leq 10$. 
\end{lem}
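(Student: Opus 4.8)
The statement is quoted verbatim from~\cite[Lemma~3.6]{BCP}, so strictly speaking the proof is a citation; but let me sketch the argument one would reconstruct, since the setting here (no $4$-cycles, hence triangles allowed) is slightly different from the girth-$5$ setting of~\cite{BCP} and one should double-check that the bound survives. The plan is to work in $G'''$, which is a simple plane multigraph with no faces of length $2$ (by construction), so after also discarding loops it is essentially a simple planar graph; hence $|E(G''')|\le 3|V(G''')|-6$, and more importantly the subgraph $G'''[B]$ induced on the big vertices is also planar, so $|E(G'''[B])|\le 3|B|-6$. The first step is therefore a counting/averaging argument: I want a vertex of $B$ that is simultaneously of small degree in $G'''$ and of small degree in $G'''[B]$.

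The key point is that \emph{every} vertex of $G'''$ is big. Indeed, when passing from $G$ to $G'$ we suppress all degree-$2$ vertices of $S\setminus N[B]$ and contract every $S_1$--$B$ edge, and then $G''$, $G'''$ only delete loops and redundant parallel edges; one checks (using Lemma~\ref{lem:first_red}, which forbids two adjacent suppressed vertices, together with the type classification of Lemma~\ref{lem:struct_edge}) that no small vertex other than those in $S_1$ survives, and each surviving $S_1$ vertex gets absorbed into its big neighbor by contraction. So $V(G''')\subseteq B$, and thus $G'''[B]=G'''$ as vertex sets. That means I really only need \emph{one} averaging bound, on $G'''$ itself: since $G'''$ is simple planar, $\sum_{b\in B}\deg_{G'''}(b)=2|E(G''')|\le 6|V(G''')|-12<6|B|$, so some $b_1\in B$ has $\deg_{G'''}(b_1)\le 5$. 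That is far stronger than the claimed $40$ and $10$. The reason the paper states the weaker bounds $40$ and $10$ is presumably that in~\cite{BCP} the relevant reduced multigraph is not quite simple and one loses constant factors from bounded multiplicity of parallel classes and from a bounded number of low-degree non-big vertices that can slip through; here I would simply verify that the crude bound $\deg_{G'''}(b_1)\le 40$, $\deg_{G'''[B]}(b_1)\le 10$ holds with room to spare, by the same Euler-formula count applied to $G'''$ and, if necessary, noting each edge of $G'''$ lies in a bounded-multiplicity parallel class.

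I expect the only real obstacle to be bookkeeping: confirming that the construction of $G',G'',G'''$ in this paper yields a multigraph whose vertex set is contained in $B$ and whose underlying simple graph is planar of bounded edge-multiplicity, so that a one-line Euler bound gives a vertex of $G'''$-degree at most a small constant. Once that is in hand, the conclusion $\deg_{G'''}(b_1)\le 40$ and $\deg_{G'''[B]}(b_1)\le 10$ is immediate (indeed wasteful), and the lemma follows. Since all of this is exactly the content of~\cite[Lemma~3.6]{BCP} adapted to the present reduced multigraph, I would in the write-up simply cite that lemma and remark that the construction of $G'''$ here has the same relevant properties (every vertex big, underlying graph planar with bounded multiplicity), so the cited bound applies unchanged.
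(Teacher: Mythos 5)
Your core simplifying claim --- that every vertex of $G'''$ is big, so $G'''[B]=G'''$ and a single Euler-formula averaging gives a vertex of degree at most $5$ --- is false in this paper's setting. Only two kinds of vertices disappear when $G'$ is built: $2$-vertices in $S\setminus N[B]$ (suppressed) and vertices of $S_1$ (absorbed by contraction of an $S_1$--$B$ edge). Small vertices of degree at least $3$ with no big neighbor (in $S_0$), and small vertices with two or more big neighbors (in $S_2, S_3,\dots$, including $2$-vertices whose both neighbors are big), all survive into $G'$, $G''$, and $G'''$. The paper itself relies on this: Lemma~\ref{lem:smallfaces} is stated for $w\in S'=V(G')\setminus B$, and the proof of Proposition~\ref{prop:bigreg} explicitly handles ``each small neighbor $v$ of $b_1$ in $G'''$''. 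Because small vertices are present, your averaging argument only yields \emph{some} vertex of small degree in $G'''$, with no guarantee that it lies in $B$ --- and membership in $B$ is essential, since the subsequent argument uses $\deg_{G''}(b_1)\geq \deg_G(b_1)/5\geq \sqrt{k}/5$. Likewise, the two separate bounds ($40$ in $G'''$ and $10$ in $G'''[B]$) are not ``wasteful'': the pigeonhole step in Proposition~\ref{prop:bigreg} subtracts at most $9\cdot 40$ edges going to small neighbors and then divides by $\deg_{G'''[B]}(b_1)\leq 10$, so both quantities are used and neither follows from a minimum-degree-$5$ statement about an unidentified vertex.

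A secondary inaccuracy: $G'''$ is not ``essentially simple''; the paper notes it can retain parallel edges (when vertices are embedded both inside and outside a $2$-cycle), though since it has no loops and no $2$-faces the bound $|E|\leq 3|V|-6$ does survive. But that does not rescue the argument, for the reason above. The paper's own treatment of this statement is simply a citation of Lemma~3.6 of~\cite{BCP}, together with the observation that the proof there uses only the absence of $4$-cycles (not girth $5$); if you want to go beyond the citation, the work to be done is to reproduce that proof and check that triangles never enter it --- not to replace it with an averaging argument over a vertex set that you have incorrectly assumed consists only of big vertices.
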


\noindent
We note that the general context of~\cite{BCP} is planar graphs with girth at
least 5.  However, the proof of Lemma~\ref{lem:1040} uses only that $G$ has no
4-cycles.

Our goal is to apply a pigeonhole-like argument to find a large number
of consecutive edges between two vertices in $G''$. To this end, we
first need to control the degrees of vertices in $G''$. We begin with a
definition. The \Emph{half-edges} of $G'$ are the elements of the multiset
of pairs $(u,e)$ where $e$ is an edge incident to $u$. Note that when $e$
is a loop around $u$, there are still two half-edges $(u,e)$.
Observe also that since we fixed a plane embedding of $G$, there is a
natural cyclic ordering of the half-edges around each fixed vertex
$u$.

\begin{lem}
  \label{lem:adjacentloops}
  If $e$ is a loop around a vertex $v$ in $G'$, then one of the
  half-loops induced by $e$ must be followed or preceded by a
  half-edge $(v,vw)$ with $v\neq w$.
\end{lem}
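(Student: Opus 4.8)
\textbf{Proof plan for Lemma~\ref{lem:adjacentloops}.}

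The statement is really a planarity fact, and my plan is to argue by contradiction using the Jordan curve theorem together with what we already know about the structure of loops. Suppose $e$ is a loop around $v$ in $G'$ such that \emph{both} half-loops of $e$ are flanked only by other half-loops (i.e.~every half-edge immediately preceding or following a half-loop of $e$ in the cyclic order around $v$ is itself a half-loop). The loop $e$ is a closed curve through $v$ in the plane; it separates the plane into an interior region $\mathrm{int}(e)$ and an exterior region $\mathrm{ext}(e)$. First I would observe that, since $G'$ was obtained from $G$ by suppressions and contractions that do not change the embedding, this loop $e$ corresponds (via Lemma~\ref{lem:struct_edge}, a type-5 loop) to an actual cycle of $G$ through the big vertex $v$; in particular $G$ is connected (Lemma~\ref{sec3:lem1}) and $v$ has positive degree on \emph{both} sides of the curve, unless one side contains no vertices of $G$ at all.

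The key step is then a counting/parity argument on the half-edges at $v$. Walk around $v$ in the fixed cyclic order of its half-edges. The two half-loops of $e$ split this cyclic sequence into two arcs, say arc $A$ (the half-edges that, together with the corresponding portion of the rotation, lie on the interior side of $e$) and arc $B$ (those on the exterior side). By the assumption that each half-loop of $e$ is adjacent only to half-loops, the half-edges at both ends of arc $A$ are half-loops, and likewise for arc $B$. Now I claim every half-edge of $G'$ at $v$ that lies in arc $A$ is a half-loop: if $(v,vw)$ were a non-loop half-edge in $A$ with $w\neq v$, then the edge $vw$ of $G'$ leaves $v$ into the interior of $e$, so $w$ (and the whole path of $G$ corresponding to $vw$) is embedded inside $e$; but then walking from this non-loop half-edge toward either end of arc $A$ we must eventually hit a half-loop, and between a non-loop half-edge and the first half-loop there is by hypothesis no transition — contradiction, since a half-loop cannot be immediately preceded by a non-loop half-edge unless that non-loop half-edge is exactly the flanking one, which the hypothesis forbids. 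Hence arc $A$ consists entirely of half-loops; by the same argument arc $B$ does too, so \emph{every} half-edge at $v$ in $G'$ is a half-loop. Consequently $v$ is incident in $G'$ to no non-loop edge at all, i.e.~$v$ is an isolated vertex of $G''$. This is the contradiction I want: $G$ is connected with at least one edge, $b_1$ from Lemma~\ref{lem:1040} exists and has positive degree in $G'''$, and in any case a big vertex of $G$ whose only incident edges in $G'$ are loops would have all of its $G$-neighbors packed into type-5 cycles returning to $v$, which combined with the connectivity of $G$ forces $v$ to be the only big vertex and leads to a structural impossibility (for instance it contradicts the existence of the second boundary vertex $b_2\neq b_1$ of any region, or more simply contradicts $\Delta(G)\geq\Delta_0$ being realized somewhere reachable from $v$).

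The main obstacle, and the part that needs care rather than cleverness, is making precise the bookkeeping of ``interior vs.~exterior'' half-edges: one must check that a loop in $G'$ genuinely is embedded as a simple closed curve (no self-crossings), which follows because it comes from a cycle of $G$ and the embedding of $G'$ is inherited from that of $G$ by contracting/suppressing along that cycle; and one must handle the degenerate case where one side of $e$ contains no vertices of $G$ (then that side contributes no half-edges and the argument on the other side alone already forces $v$ to be isolated). I would also double-check the edge case where $e$ is one of several parallel loops at $v$, but the hypothesis is about a single fixed loop $e$ and its two half-loops, so nested loops at $v$ are fine: a half-loop of a \emph{different} loop $e'$ counts as a legitimate non-flanking neighbor only if $e'\neq e$, and indeed the lemma's conclusion would be satisfied vacuously in that reading — so the only genuinely dangerous configuration is the one ruled out above. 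Once the contradiction ``$v$ isolated in $G''$'' is reached, the lemma follows.
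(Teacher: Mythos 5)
There is a genuine gap at the heart of your argument. Negating the lemma only tells you that the two half-loops of the \emph{fixed} loop $e$ are preceded and followed by half-loops; it says nothing about the half-loops of other loops $e'$ around $v$. Your propagation step (``walking from this non-loop half-edge toward either end of arc $A$ we must eventually hit a half-loop, and \dots a half-loop cannot be immediately preceded by a non-loop half-edge'') silently upgrades the hypothesis to ``\emph{every} half-loop at $v$ is flanked only by half-loops,'' which you are not entitled to assume. A configuration in which the half-loops of $e$ are flanked by half-loops of a second loop $e'$, while $e'$ itself has non-loop half-edges next to it in the rotation, satisfies your contradiction hypothesis but not your conclusion; your aside about nested loops dismisses exactly this case, and incorrectly so, since the lemma is not ``vacuously satisfied'' there --- its conclusion demands a non-loop half-edge adjacent to a half-loop of $e$ itself. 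So the deduction that $v$ is isolated in $G''$ does not follow from the negation of the statement.

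Even granting that deduction, the closing contradiction is only gestured at, and it is not obtainable from connectivity alone. A big vertex all of whose incident $G'$-edges are loops is a perfectly coherent outcome of the construction for a general plane graph: a wheel (a hub together with a cycle of $S_1$-neighbors) collapses in $G'$ to a single vertex carrying only loops, and there every half-loop is flanked by half-loops. What rules this out for the graph $G$ of the paper is precisely the absence of 4-cycles combined with Lemmas~\ref{lem:first_red} and~\ref{lem:notriangle}; your proposal never invokes the 4-cycle-free hypothesis at any essential point, which is a warning sign because the statement is false without it. (Appealing to the boundary vertices $b_1,b_2$ of a region is also circular: the existence of large regions, Proposition~\ref{prop:bigreg}, is proved \emph{using} this lemma via Corollary~\ref{cor:degrees}.) The paper's proof is instead local: by Lemma~\ref{lem:struct_edge} the loop has type 5, arising from a cycle $vx_ex'_e$ with $x_e,x'_e\in S_1$; by Lemma~\ref{lem:notriangle} one of $x_e,x'_e$, say $x_e$, has degree at least 3; taking the neighbor $w$ of $x_e$ immediately preceding or following $x'_e$ in the rotation (or, if $w$ is suppressed, its other neighbor $x$, which has degree at least 3 by Lemma~\ref{lem:first_red}), one checks that $vw\notin E(G)$ (resp.\ $vx\notin E(G)$), since otherwise $G$ would contain a 4-cycle; hence after contraction this half-edge at $v$ is not a half-loop and lies next to a half-loop of $e$. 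Any correct proof will need some version of this local, 4-cycle-based step.
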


\begin{proof}
  By Lemma~\ref{lem:struct_edge}, every loop has type $5$.  So let
  $x_e$ and $x_e'$ denote the vertices in $G$ that merged into $v$ to
  form $e$ in $G'$.  By Lemma~\ref{lem:notriangle}, either
  $\deg(x_e)>2$ or $\deg(x'_e)>2$; by symmetry, assume
  $d(x_e)>2$.  Among all neighbors of $x_e$ in $G$, other than
  $x'_e$ and $v$, choose $w$ to be one that immediately precedes or
  follows $x'_e$.  
  
  If $w$ is not suppressed in $G'$, then the half-edge $(v,vw)$
  precedes or follows $(v,e)$ or $(v,e')$. Note that $vw\notin E(G)$
  since otherwise $vwx_ex'_e$ is a 4-cycle in $G$. Thus we have
  $v\neq w$ in $G'$ and the lemma is true.
  So assume that $w$ is suppressed.
  Now $w$ has degree $2$ in $G$. Let $x$ be the neighbor of $w$
  other than $x_e$. Since $x_e$ is small, Lemma~\ref{lem:first_red}
  ensures that $x$ has degree at least 3 in $G$; hence, it is not
  suppressed in $G'$. Therefore, the half-edge $(v,vx)$ precedes or
  follows $(v,e)$ or $(v,e')$. Again, $vx\notin E(G)$ since otherwise
  $vxwx_e$ is a 4-cycle in $G$. Thus $x\neq v$ in $G'$ and the lemma is
  true.
\end{proof}

Lemma~\ref{lem:adjacentloops} implies the following relationship between
degrees of vertices in $G''$ and in $G'$.
\begin{cor}
  \label{cor:degrees}
  Every $v\in V(G')$ satisfies
  $\deg_{G''}(v)\geq \frac{\deg_{G'}(v)}{5}$.
\end{cor}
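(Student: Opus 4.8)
The plan is to prove the bound by double-counting the half-edges of $G'$ incident to a fixed vertex $v$. Let $\ell$ denote the number of loops of $G'$ at $v$ and $m$ the number of non-loop edges of $G'$ at $v$; then $\deg_{G'}(v)=2\ell+m$ (each loop contributes two half-edges at $v$), while $\deg_{G''}(v)=m$, since passing from $G'$ to $G''$ deletes precisely the loops and nothing else. So it suffices to show $\ell\le 2m$: this yields $5m\ge 2\ell+m=\deg_{G'}(v)$, which rearranges to the claimed inequality $\deg_{G''}(v)\ge\deg_{G'}(v)/5$.

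To bound $\ell$, I would invoke the fixed plane embedding, which induces a cyclic order on the half-edges at $v$. For each loop $e$ at $v$, Lemma~\ref{lem:adjacentloops} supplies a non-loop half-edge $h_e=(v,vw)$ with $w\ne v$ that is the cyclic predecessor or successor of one of the two half-loops of $e$. This defines a map from the loops at $v$ to the non-loop half-edges at $v$. The key observation is that this map is at most $2$-to-$1$: a fixed non-loop half-edge $h$ has exactly two cyclic neighbours among the half-edges at $v$, and each of those neighbours is a half-loop of at most one loop, so at most two loops $e$ can satisfy $h_e=h$. Hence $\ell\le 2m$, as needed.

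I expect no genuine obstacle here; this is a short counting argument once Lemma~\ref{lem:adjacentloops} is available. The only subtlety is the bookkeeping distinction between loops and half-loops: the two half-loops of a single loop may both be cyclically adjacent to one non-loop half-edge, but since the map runs from loops (not half-loops) to non-loop half-edges, this does not spoil the $2$-to-$1$ bound. One should also handle the degenerate case $m=0$ separately: then Lemma~\ref{lem:adjacentloops} forces $\ell=0$ as well, so $v$ is isolated in $G'$ and the inequality holds trivially; otherwise the argument above applies verbatim.
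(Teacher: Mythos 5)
Your proof is correct and follows essentially the same route as the paper: both use the plane embedding's cyclic order of half-edges together with Lemma~\ref{lem:adjacentloops} to charge each loop at $v$ to a cyclically adjacent non-loop half-edge, with multiplicity at most two, giving $\ell\le 2m$ and hence $\deg_{G'}(v)=2\ell+m\le 5m=5\deg_{G''}(v)$. The paper organizes the same count from the other side (each non-loop neighbor collects at most two adjacent half-loops), but the argument is the same.
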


\begin{proof}
  Suppose $v\in V(G')$ and consider the half-edges around $v$ in $G'$. By
  definition, there are $\deg_{G'}(v)$ half-edges around $v$ and
  $\deg_{G''}(v)$ of them are not half-loops. So it suffices to
  prove that the number of half-loops around $v$ is at most four times the
  number of the other half-edges, i.e., at most $4\deg_{G''}(v)$. 

  Suppose $w\in N_{G'}(v)$. Consider the two half-edges $(v,e)$ and
  $(v,f)$ such that $(v,e)$, $(v,vw)$ and $(v,f)$ are consecutive
  around $v$. Let $F(w)$ be the maximum subset of $\{(v,e),(v,f)\}$
  containing only half-loops.
  Lemma~\ref{lem:adjacentloops} ensures that, for every loop, one of its 
  half-loops appears in $F(w)$ for some $w\in N_{G'}(v)$. Therefore, the
  number of half-loops around $v$ is at most
  \[2\left|\cup_{w\in N_{G'}(v)} F(w)\right|\leq 4|N_{G'}(v)|=4\deg_{G''}(v).\]
  This concludes the proof, since
  \[\deg_{G'}(v)\leq\deg_{G''}(v)+4\deg_{G''}(v)=5\deg_{G''}(v).\]
\aftermath
\end{proof}

Consider the vertex $b_1$ obtained by Lemma~\ref{lem:1040}. By
Corollary~\ref{cor:degrees}, we have
\[\deg_{G''}(b_1)\geq\frac{\deg_{G'}(b_1)}{5}\geq
\frac{\deg_G(b_1)}{5}\geq\frac{\sqrt{k}}{5}.\]
Using a pigeonhole argument, we will see that $b_1$ has some neighbor $b_2$
such that at least
$\frac{\sqrt{k}}{5\times 40}$ consecutive edges incident to $b_1$ end at
$b_2$. Note that Proposition~\ref{prop:bigreg} almost follows from
this result (with $\frac{\sqrt{k}}{50}$ replaced by $\frac{\sqrt{k}}{200}$). We
only need to refine this argument to show how to force $b_2\in B$, i.e.,
$b_2\notin S'$, where $S'=V(G')\setminus B$. To this end, we show that small
vertices are incident to few consecutive edges in $G''$.

\begin{lem}
  \label{lem:smallfaces}
  If $v\in B$ and $w\in S'$, then $(v,w)$ is on the boundary of at
  most 8 consecutive faces of length 2\ in $G''$.
\end{lem}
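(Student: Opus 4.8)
The plan is to re-express the claim in terms of consecutive parallel edges of $G''$, classify those edges via Lemma~\ref{lem:struct_edge}, and then bound the one ``dangerous'' type of edge using Lemma~\ref{lem:smallfacesaux}. Suppose $(v,w)$ with $v\in B$, $w\in S'$, lies on the boundary of $r$ consecutive faces of length $2$ in $G''$; equivalently $v$ and $w$ are joined in $G''$ -- and hence in $G'$, since none of these edges is a loop -- by $r+1$ consecutive parallel edges $e_0,\dots,e_r$, where $e_{j-1}$ and $e_j$ bound the $j$-th face. Since $w\in S$, Lemma~\ref{lem:struct_edge} gives that each $e_j$ has type $1$, $2$, $3$, or $4$; type $4$ is impossible here, because the middle $2$-vertex of a type-$4$ path would be adjacent to the big vertex $v$ and hence could not have been suppressed. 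Thus each $e_j$ has type $1$, $2$, or $3$. Two immediate bounds: there is at most one type-$1$ edge (as $G$ is simple), and at most one type-$2$ edge (two such would form the $4$-cycle $v x_e w x_{e'} v$).

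Next I would use the absence of $4$-cycles to restrict which types can appear consecutively in $e_0,\dots,e_r$. For consecutive $e_j,e_{j+1}$ the union of the two corresponding paths in $G$ is a cycle whose length is the sum of the two path-lengths, and types $1,2,3$ have path-lengths $1,2,3$. Hence a type-$1$ edge is never consecutive with a type-$3$ edge (that would be a $4$-cycle), and two type-$2$ edges are never consecutive (same reason, already ruled out); the combinations (type 1, type 2), (type 2, type 3), and (type 3, type 3) create a triangle, a $5$-cycle, and a $6$-cycle respectively, which are harmless. Consequently each neighbour of the unique type-$1$ edge (if present) in the sequence must have type $2$, and since there is at most one type-$2$ edge, the type-$1$ edge must be $e_0$ or $e_r$.

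The crux is to bound the length of a maximal run of consecutive type-$3$ edges. A run $e_a,\dots,e_b$ corresponds to paths $v x_j y_j w$ with $y_j$ a $2$-vertex, $N(y_j)=\{w,x_j\}$, and $x_j\in N(v)\cap S_1$; the $y_j$ are pairwise distinct (no $4$-cycle) and, crucially, are \emph{consecutive} neighbours of $w$ in $G$, since suppressing $2$-vertices, contracting $S_1$--$B$ edges, and deleting loops merely relabel the half-edges at $w$ (which is not deleted, as $w\in V(G')$, and which bounds no loop of $G'$ since $w$ is small) without changing their cyclic order. To invoke Lemma~\ref{lem:smallfacesaux} I must also check its hypothesis (2): for consecutive $e_j,e_{j+1}$, every vertex of $G$ strictly inside the cycle $v x_j y_j w y_{j+1} x_{j+1} v$ is adjacent to $v$. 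Any such vertex must have disappeared when building $G''$ (otherwise it is a vertex of the connected plane multigraph $G''$ lying strictly inside one of its faces); so it is a suppressed $2$-vertex, a vertex of $S_1$, or a vertex on a deleted loop. The first case is impossible, since it would place an edge of $G''$ inside the face; in the other two, $v$ is the only big vertex inside or on the cycle, so the vertex -- being in $S_1$ with its unique big neighbour inside or on the cycle, or lying on a type-$5$ triangle around $v$ -- is adjacent to $v$. Lemma~\ref{lem:smallfacesaux} then rules out a run of $5$ consecutive type-$3$ edges, so every such run has length at most $4$.

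Assembling the cases finishes the proof: if a type-$1$ edge is present it lies at an end, with its unique neighbour (if any) being the type-$2$ edge, so the remaining $e_j$ form a single run of type-$3$ edges and $r+1\le 2+4$; if no type-$1$ edge is present, the at-most-one type-$2$ edge splits the type-$3$ edges into at most two runs, each of length at most $4$, so $r\le 4+4=8$. Either way $r\le 8$. The main obstacle is the crux step, and within it the verification of hypothesis (2) of Lemma~\ref{lem:smallfacesaux}: this requires carefully unwinding the three-stage construction $G\to G'\to G''$ and using the connectivity of $G''$ together with the classification of loops (all of type $5$) and of disappearing vertices (suppressed $2$-vertices and vertices of $S_1$) to see that the ``lens'' between two consecutive parallel edges can, inside $G$, contain only neighbours of $v$. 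Everything else is short bookkeeping with cycle lengths and the no-$4$-cycle hypothesis.
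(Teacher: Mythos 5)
Your overall route is the same as the paper's: classify the parallel edges between $v$ and $w$ via Lemma~\ref{lem:struct_edge}, note that there is at most one edge of type 1, at most one of type 2, and none of types 4--6, and bound each maximal block of consecutive type-3 edges by $4$ using Lemma~\ref{lem:smallfacesaux}, giving at most $4+1+4=9$ parallel edges and hence at most $8$ faces of length $2$. The gap is in the step you yourself call the crux: excluding suppressed $2$-vertices strictly inside a lens bounded by two consecutive type-3 edges. You assert this case ``is impossible, since it would place an edge of $G''$ inside the face.'' That inference is not valid: if $u$ is a suppressed $2$-vertex inside the lens and both of its neighbours lie in $S_1$ (or among $x_{e_i},x_{e_{i+1}}$, which are also in $S_1$), then both neighbours are contracted into $v$, so suppressing $u$ produces a \emph{loop} at $v$ in $G'$, and loops are deleted when forming $G''$. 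In that sub-case no edge of $G''$ ends up inside the $2$-face, so the embedding gives no contradiction, and your verification of hypothesis (2) of Lemma~\ref{lem:smallfacesaux} is incomplete.

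The sub-case is indeed impossible, but for a different reason, and closing it requires Lemma~\ref{lem:first_red}, which you never invoke at this point (the paper does). Since $u$ is suppressed, $u\notin N[B]$, so Lemma~\ref{lem:first_red} applied to the two edges at $u$ shows both neighbours of $u$ lie in $N[B]$; they are small and lie inside or on the lens, where $v$ is the only big vertex, so both are adjacent to $v$, and $G$ contains a $4$-cycle through $u$ and $v$ --- a contradiction. This is exactly how the paper verifies hypothesis (2): any interior vertex not adjacent to $v$ forces a forbidden $4$-cycle, rather than an embedded edge of $G''$ inside a face. With this repair your argument is correct and essentially the paper's; the remaining deviations are harmless (you only forbid a type-1 edge \emph{adjacent} to a type-3 edge, while the paper observes that a type-1 and a type-3 edge cannot coexist at all, and your explicit check that the $y_j$'s are consecutive neighbours of $w$ spells out a detail the paper leaves implicit).
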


\begin{proof}
  Pick $v\in B$ such that there is an edge $vw\in E(G')$, with $w\in S'$.
  We consider each possible type of edge in $G'$ between $v$ and $w$. The
  type $3$ edges are a special case, which we postpone to the end.
  Since $G$ is simple, at most one edge $vw$ of $G'$ has type $1$.
  Similarly, if $G'$ has two edges $e_1$ and $e_2$ of type $2$, then
    $x_{e_1}\neq x_{e_2}$. Thus $vx_{e_1}wx_{e_2}$ is a 4-cycle in $G$, a
    contradiction. So $G'$ has at most one edge of type 2.
    Since $v\in B$ and $w\in S'$, $G'$ has no edge of type 4, 5, or 6.

  Only type 3 edges remain. We assume such an edge exists, since
  otherwise the lemma holds.  Note that $G'$ has
  no edge of type 4 (since $v\in B$), nor of type 1 (since $G$ has no
  4-cycle), nor of type 5 or 6 (since $w\in S'$). So $G'$ has at most one
  edge $f$ not of type $3$, and $f$, if it exists, has type $2$. Thus, edge $f$
  separates two blocks of consecutive type 3 edges. To prove the lemma, it 
  suffices to prove that each such block has size at most four.

  Assume that $e_1,\ldots,e_5$ are edges of type $3$ that are consecutive in
$G''$. We
  now prove that the hypotheses of Lemma~\ref{lem:smallfacesaux} are
  satisfied by the subgraph of $G$ induced by the vertices inside the
  cycle $vx_{e_1}y_{e_1}wy_{e_5}x_{e_5}$. Since each edge $e_i$ has
  type $3$, the first hypothesis holds. 

  To prove the second hypothesis holds,
  assume that some vertex $x$ is not adjacent to $v$, but $x$ lies inside some
  cycle $C=vx_{e_i}y_{e_i}wy_{e_{i+1}}x_{e_{i+1}}$. Note that $x$ is not a
  neighbor of $y_{e_i}$ or $y_{e_{i+1}}$,
  since they both have degree $2$; nor of $w$ since $e_i$ and $e_{i+1}$
  are consecutive edges in $G''$.  
  Note that $e_i$ and $e_{i+1}$ bound a face of length $2$ in $G''$
  so every vertex inside the cycle $C$ disappears when we construct
  $G'$. Thus, all these vertices are small, and either lie in $S_1$ or
  lie in $S\setminus N[B]$ and have degree $2$ in $G$. 
  Hence, $v$ is the only big
  vertex inside or on $C$ and $xv\notin E(G)$; so $x\notin \cup_{i\ge 1}S_i$.

  Since $x\notin S_1$, $x$ has degree $2$ and its two neighbors, say $y$
  and $z$, lie in $S$. Applying Lemma~\ref{lem:first_red} to edges $xy$
  and $xz$, we get that $y,z\in N[B]$. This implies that both $y$ and
  $z$ are neighbors of $v$, so $xyvz$ is a 4-cycle in $G$, a
  contradiction. Therefore, no such $x$ exists.

  Now Lemma~\ref{lem:smallfacesaux} yields a contradiction, since
  $G$ cannot contain this configuration.
\end{proof}

We can now finish the proof of Proposition~\ref{prop:bigreg}.

\begin{proof}[Proof of Proposition~\ref{prop:bigreg}]
  Let $b_1$ be a vertex in $G'''$ guaranteed by Lemma~\ref{lem:1040}. For
  each small neighbor $v$ of $b_1$ in $G'''$ and edge $vb_1$,
  Lemma~\ref{lem:smallfaces} ensures that in $G''$ edge $vb_1$ corresponds to
  at most 9 edges between $b_1$ and $v$.
  Since $\deg_{G'''}(b_1)\leq 40$, the number of such edges is at most $9\times
40=360$.
  However, by Corollary~\ref{cor:degrees}, we have
  $\deg_{G''}(b_1)\geq \frac{\deg_G(b_1)}{5}\geq
  \frac{\sqrt{k}}{5}$. Thus, there must exist a big neighbor $b_2$ of
  $b_1$ in $G''$ such that there are at least
  $\frac{\frac{\sqrt{k}}{5}-360}{\deg_{G'''[B]}(b_1)}\geq
  \frac{\sqrt{k}}{50}-36$ consecutive edges $b_1b_2$ in $G''$. By
  definition, these edges form a region of size
  $\frac{\sqrt{k}}{50}-37$ in $G$.
\end{proof}

\subsection{Proof of Proposition~\ref{prop:smallreg}: Large Regions
  are Reducible}
\label{subsec:smallreg}

In this section, we show that $G$ cannot contain arbitrarily large
regions, i.e., for $r$ large enough every $r$-region is reducible.
Note that the square of such $r$-regions consists of two cliques, with some
edges between them.   Following the terminology of
Corollary~\ref{cor:forme_region}, we denote the vertices of these cliques by
$B_1$ and $B_2$.  As before, $D$ denotes a set of independent 2-vertices, each
with one neighbor in $B_1$ and one neighbor in $B_2$.  We begin by proving that
there are only few edges between $B_1$ and $B_2$.

\begin{lem}
  \label{lem:fewedges}
  Let $R$ be an $r$-region of $G$. Each $w\in B_1\cup B_2$ has at most
  one neighbor in $B_1$, at most one in $B_2$, and at most eight in
  $D$.
\end{lem}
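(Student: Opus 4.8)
The statement is about an $r$-region $R$ of $G$, so I work entirely inside the subgraph of $G$ consisting of the two big vertices $b_1,b_2$ on the boundary of every face of $R$ in $G''$, the sets $B_1\subseteq N(b_1)$, $B_2\subseteq N(b_2)$, and the independent set $D$ of $2$-vertices described in Corollary~\ref{cor:forme_region}. I want to bound, for a fixed $w\in B_1\cup B_2$, the number of its neighbors in each of $B_1$, $B_2$, $D$. The main tool is that $G$ has no $4$-cycle, together with the structural description of the edges of $G'$ from Lemma~\ref{lem:struct_edge} (all edges inside $R$ in $G'$ have type $1$, $5$, or $6$).

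\textbf{Key steps.} First, the bound of at most eight neighbors in $D$: each $z\in D$ is a $2$-vertex with one neighbor in $B_1$ and one in $B_2$; if $w$ had many neighbors $z,z'\in D$, then (since those are $2$-vertices) $w$ together with the common other big endpoint of the paths $b_i z \cdots$ would create a short cycle, or more directly, two such $z,z'$ that are consecutive around $w$ and whose paths into $G'$ are consecutive edges of $G''$ give two faces of length $2$ sharing the boundary pair $\{b_1,b_2\}$ (or $\{w,\cdot\}$) — the real leverage is Lemma~\ref{lem:smallfaces} if $w$ is small in $G'$, but here $w\in B_1\cup B_2$ so $w$ need not be big; I would instead observe that $w$ lies in $S_1$ or is big, and that the paths through $D$-vertices incident to $w$ are type-$3$ or type-$6$ pieces. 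Chasing $4$-cycles: if $z,z'\in D\cap N(w)$ with common other neighbor $b_i$, then $w z b_i z' w$ — wait, that is length $4$ only if $w$ and $b_i$ are the two big ends, so this is forbidden; hence all the $D$-neighbors of $w$ route to \emph{different} second endpoints, but there are only $\deg_{G'''}$-many of those, and combined with Lemma~\ref{lem:smallfaces}'s ``at most $8$ consecutive'' bound one gets the constant $8$. Second, the at-most-one bound for $B_1$ and $B_2$: if $w$ had two neighbors $u,u'\in B_1$, then $w u b_1 u' w$ is a $4$-cycle in $G$ (each of $u,u',w$ is adjacent to $b_1$ and to $w$), a contradiction; this is the clean case. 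The symmetric statement for $B_2$ is identical. So the heart is really the $D$-count.

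\textbf{Main obstacle.} The hard part is the $D$-bound, because $w$ is only assumed to lie in $B_1\cup B_2$ (equivalently $w\in S_1$ or $w$ big in the original graph, after accounting for contractions), not to be a big vertex of $G'$, so Lemma~\ref{lem:smallfaces} does not apply verbatim. I expect to handle it by a direct planarity-plus-girth argument: order the $D$-neighbors of $w$ cyclically around $w$; any two that are ``consecutive'' in the sense of producing consecutive length-$2$ faces of $G''$ sharing $\{b_1,b_2\}$ would, if there were $9$ of them, assemble into a configuration forbidden by Lemma~\ref{lem:smallfacesaux} (five consecutive degree-$2$ neighbors of a small vertex with the adjacency-to-$v$ condition), exactly as in the proof of Lemma~\ref{lem:smallfaces}, yielding the bound $8$. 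The $4$-cycle exclusions (no $vw$ edge, no shortcut edges) go through as before since every internal vertex involved has degree $2$ or lies in $S_1$. I would also need to rule out two $D$-neighbors of $w$ sharing \emph{both} endpoints (that would be a $4$-cycle $w z b_i z' w$ again), which forces the $D$-neighbors to fan out and lets the consecutive-faces counting apply. Assembling these pieces gives: at most one neighbor in $B_1$, at most one in $B_2$, and at most eight in $D$.
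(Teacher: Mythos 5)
Your treatment of the clique parts is exactly right and matches the paper: two neighbors $u,u'$ of $w$ in $B_i$ give the $4$-cycle $wub_iu'$. Your instinct for the $D$-part is also the paper's: with $b_2$ in the role of $v$ and the second endpoints $y_i\in B_2\subseteq N(b_2)$ in the role of the $x_i$, five consecutive $D$-neighbors of $w$ would trigger Lemma~\ref{lem:smallfacesaux}; the interior-adjacency hypothesis is handled by noting that a vertex $z$ inside one of the cycles $wx_iy_ib_2y_{i+1}x_{i+1}$ that is not adjacent to $b_2$ must disappear in $G'$, hence is a suppressed $2$-vertex, and Lemma~\ref{lem:first_red} then forces both its neighbors to be adjacent to $b_2$, creating a $4$-cycle. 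So each maximal block of consecutive $D$-neighbors of $w$ has size at most $4$.

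The genuine gap is the jump from ``at most $4$ consecutive'' to ``at most $8$ total.'' You assert that nine $D$-neighbors would assemble into five consecutive ones, but that requires knowing $w$ has at most \emph{two} maximal blocks of consecutive $D$-neighbors in the cyclic order, and you never argue this; a priori the blocks could be separated by $b_1$, by a $B_1$-neighbor, and by a $B_2$-neighbor, giving three or more blocks and only a weaker bound. The paper closes this by showing that a separating neighbor $x\in V(R)\setminus D$ cannot lie in $B_1$: the triangle $b_1wx$ would then have $D$-neighbors of $w$ inside and outside, while every vertex of $D$ must reach $b_2$ through $B_2$, contradicting planarity; hence any such separator lies in $B_2$, of which $w$ has at most one, so there are at most two blocks and at most $2\cdot 4=8$ vertices of $D$ adjacent to $w$. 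Separately, your first route in ``Key steps'' does not work: the distinct second endpoints of the $D$-neighbors lie in $B_2$, whose size is of order $r$ and is not controlled by any $G'''$-degree, and Lemma~\ref{lem:smallfaces} cannot be applied to $w$ at all, since $w\in B_1\cup B_2$ means $w\in S_1$ (it is never big), and $w$ is contracted into $b_1$, so it is not even a vertex of $G'$.
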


\begin{proof}
  Suppose $w\in B_1\cup B_2$. If $w$ has two neighbors $x$ and $y$ in
  $B_i$, then $b_ixwy$ is a 4-cycle in $G$, a contradiction. So we
  assume $w$ has at most one neighbor in each of $B_1$ and $B_2$.  In
  what follows, we assume by symmetry that $w\in B_1$.

  Suppose that $w$ has 5 consecutive neighbors $x_1,\ldots,x_5$, all
  in $D$, and denote by $y_i$ the common neighbor of $x_i$ and
  $b_2$. By Lemma~\ref{lem:smallfacesaux}, there is a vertex $z$
  inside some cycle $wx_iy_ib_2y_{i+1}x_{i+1}$ that is not adjacent to
  $b_2$.  Since $R$ is an $r$-region, $z$ disappears when we construct
  $G'$.  Since $z\notin N_G(b_2)$, vertex $z$ must be a 2-vertex.  By
  Lemma~\ref{lem:first_red}, each neighbor of $z$ is adjacent to
  $b_2$.  So $G$ contains a 4-cycle, a contradiction.  Thus, $w$ has
  at most 4 consecutive neighbors in $D$.

  Consider an edge $wx$ between these blocks of consecutive neighbors
  in $D$ where $x\in V(R)\setminus D$. Then $x$ cannot lie in $B_1$,
  otherwise $b_1wx$ is a triangle not containing $b_2$ nor any vertex
  in $B_2$. By planarity, there cannot be vertices of $D$ inside and
  outside of this triangle. Therefore $x\in B_2$.

  Since $G$ has no 4-cycle, at most one such neighbor $x$ exists, so
  $w$ has at most two blocks of consecutive neighbors in $D$. This
  proves the final assertion.
\end{proof}

We note that we can prove Theorem~\ref{thm:main} more simply (and with a better
bound on $\Delta$) if we only want the result for list-coloring.  In fact, we do
this in Section~\ref{sec5}, where we prove it for correspondence coloring. 
However, to prove this same bound for Alon--Tarsi number, as we do below, seems
to require using the Kernel Lemma (Lemma~\ref{lem:digraph}).

Proving that $G$ does not contain large regions amounts to proving
that $r$-regions of $G$ are square $L'$-colorable for a suitable
assignment $L'$. To prove this new assertion, we use an auxiliary
result about choosability, due to Bondy, Boppana, and Siegel
(see Remark~2.4\ in~\cite{AlonT92}). 
This result applies to kernel perfect
digraphs. We briefly recall the definition here. A \Emph{kernel} $K$ in a
digraph $D$ is a subset of $V(D)$ such that every vertex $v$ of $D$
satisfies: $v\in K$ if and only if $N^+(v)\cap K=\varnothing$. A
digraph is \Emph{kernel perfect} if each of its induced subgraphs has a
kernel.

\begin{lem}
  \label{lem:digraph}
  Let $D$ be a kernel perfect digraph $D$ with underlying graph
  $H$. If $L$ is a list assignment for $V(H)$ such that for all
  $v\in V(H)$, $|L(v)|\geq d^+(v)+1$, then $H$ is
  $L$-colorable.
\end{lem}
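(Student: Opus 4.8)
\textbf{Proof plan for Lemma~\ref{lem:digraph}.}
The plan is to argue by induction on $|V(H)|$, following the classical Bondy--Boppana--Siegel argument. The base case $|V(H)|=0$ is vacuous. For the inductive step, let $L$ be a list assignment with $|L(v)|\geq d^+_D(v)+1$ for every $v$. First I would pick an arbitrary color $c$ that appears in some list, and let $W=\{v\in V(H): c\in L(v)\}$; this is a nonempty set. The key idea is that the digraph $D[W]$ is an induced subdigraph of $D$, hence (by the kernel perfect hypothesis) has a kernel $K\subseteq W$. I would then assign color $c$ to every vertex of $K$. Since $K$ is a kernel, it is in particular an independent set in $H$ (if $uv\in E(H)$ with $u,v\in K$, then one of the arcs $uv$ or $vu$ is in $D$, say $u\to v$; then $v\in N^+(u)\cap K$, contradicting $u\in K$), so this is a proper partial coloring.

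Next I would remove $K$ from the digraph and delete color $c$ from all remaining lists, obtaining $D'=D-K$ with list assignment $L'(v)=L(v)\setminus\{c\}$. The main point to verify is that $D'$ still satisfies the degree hypothesis, so that induction applies: for each $v\in V(D')$, I need $|L'(v)|\geq d^+_{D'}(v)+1$. If $v\notin W$, then $c\notin L(v)$, so $|L'(v)|=|L(v)|\geq d^+_D(v)+1\geq d^+_{D'}(v)+1$ since deleting vertices cannot increase out-degree. If $v\in W$ but $v\notin K$, then since $K$ is a kernel and $v\notin K$, we have $N^+_D(v)\cap K\neq\varnothing$; hence at least one out-neighbor of $v$ is deleted, so $d^+_{D'}(v)\leq d^+_D(v)-1$, while $|L'(v)|=|L(v)|-1\geq d^+_D(v)\geq d^+_{D'}(v)+1$. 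In both cases the hypothesis is preserved. Also $D'$ is kernel perfect, since any induced subdigraph of $D'$ is an induced subdigraph of $D$. By the induction hypothesis, $D'$ (equivalently $H-K$) has a proper $L'$-coloring; combining it with the color $c$ on $K$ gives a proper $L$-coloring of $H$, because $c$ was removed from every list in $L'$ so no conflict can arise across the cut between $K$ and $V(H)\setminus K$.

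I do not expect a serious obstacle here; the only subtlety is the bookkeeping with out-degrees in the two cases $v\in W\setminus K$ versus $v\notin W$, and the observation that a kernel is independent in the underlying graph. One should also note $W\neq\varnothing$ whenever $V(H)\neq\varnothing$, which holds because every list is nonempty (as $|L(v)|\geq d^+(v)+1\geq 1$), so some color $c$ can be chosen; then $K$ is a genuine nonempty subset and the induction strictly decreases $|V(H)|$.
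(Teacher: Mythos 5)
Your proof is correct. Note that the paper does not prove Lemma~\ref{lem:digraph} at all: it is quoted as a known result of Bondy, Boppana, and Siegel (via Remark~2.4 of Alon--Tarsi), and your argument---choose a color $c$, take a kernel $K$ of the induced subdigraph on $\{v: c\in L(v)\}$, color $K$ with $c$, delete $K$ and remove $c$ from the remaining lists, and induct, checking that out-degrees drop by at least one exactly for the vertices that lose a color---is precisely the standard proof of that cited result, with the key points (independence of a kernel in the underlying graph, and nonemptiness of $K$ so the induction progresses) handled correctly.
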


We use this lemma to reduce the problem of square $L$-coloring an
$r$-region to finding a kernel perfect orientation. We apply this
method to prove the following generic result about choosability of graphs
covered by two cliques with few edges between them.
(Our next lemma is analogous to Lemma 3.13 in~\cite{BCP}.  One major reason that our
bounds below on $|B_i|$ and $|T_i|$ are so much larger is that here we apply the lemma
to a graph $G$ that can contain triangles.)

\begin{lem}
  \label{lem:const}
  Let $H$ be a graph covered by two disjoint cliques, $B_1$ and
  $B_2$. Let $L$ be a list assignment for $V(H)$ and suppose
  $T_i\subset B_i$ for each $i\in\{1,2\}$.  Now $H$ is $L$-colorable
  if the following five conditions hold.
  \begin{enumerate}
  \item $|B_1|\geq 52811$ and $|B_2|\geq 52811$.
  \item $|T_1|\leq 4400$ and $|T_2|\leq 4400$.
  \item For each $v\in B_i$, $|N(v)\cap B_{3-i}|\leq 11$.
  \item For each $v\in T_i$, $|L(v)|\geq |B_i|-44$.
  \item For each $v\in B_i\setminus T_i$, $|L(v)|\geq |B_i|$.
  \end{enumerate}
\end{lem}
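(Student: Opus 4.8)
The plan is to prove Lemma~\ref{lem:const} by constructing an orientation $D$ of $H$ that is kernel perfect and in which each vertex $v$ has out-degree at most $|L(v)|-1$; then Lemma~\ref{lem:digraph} finishes the job. The natural orientation to try is the following: orient all edges inside $B_1$ and inside $B_2$ acyclically (i.e.\ pick a linear order on each clique and orient each edge toward its larger endpoint), and orient the edges between $B_1$ and $B_2$ so that each ``cross edge'' points from the clique with more vertices toward the clique with fewer — or, more carefully, point cross edges from $B_i\setminus T_i$ toward $B_{3-i}$ (so that vertices in $T_i$ get all their cross edges oriented inward) and handle cross edges between $T_1$ and $T_2$ by some fixed rule. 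I would first record the out-degree bounds this produces, then verify kernel perfectness.

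For the out-degree count: a vertex $v\in B_i\setminus T_i$ has list size at least $|B_i|$ (condition 5); within its own clique it has out-degree at most $|B_i|-1$ (from the acyclic orientation), and we arrange all of its cross edges to point outward, contributing at most $11$ more by condition 3. That gives out-degree at most $|B_i|-1+11=|B_i|+10$, which is too big by $11$. So the acyclic orientation inside the clique must be chosen more cleverly: I would place the (at most $4400$) vertices of $T_i$ at the \emph{end} of the linear order on $B_i$, so that a vertex $v\in B_i\setminus T_i$ has out-degree within $B_i$ equal to (number of $B_i$-vertices above it) which can still be close to $|B_i|-1$. To fix this, instead push the cross-oriented load onto the $T_i$ side: orient every cross edge toward the endpoint lying in $T_1\cup T_2$ when exactly one endpoint is in such a set, and note $|N(v)\cap B_{3-i}|\le 11$ means a vertex of $T_i$ absorbs at most... this still doesn't obviously work, so the real content is a counting juggle showing that with $|T_i|\le 4400$ and $|L(v)|\ge |B_i|-44$ on $T_i$, one can linearly order each $B_i$ and orient cross edges so that (a) each $v\in B_i\setminus T_i$ has out-degree $\le |B_i|-1$, using that it sees few $T$-vertices, and (b) each $v\in T_i$ has out-degree $\le |B_i|-45$, using that $T_i$ is small so we can bury $T_i$-vertices low in the order and they have at most $11$ cross-neighbours plus $|T_i|-1<4400$ same-clique neighbours — wait, $4400$ exceeds $44$, so the dominant trick must be that each $v\in T_i$ has \emph{few} neighbours overall inside $B_i$ as well. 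Re-reading: there is no such hypothesis, so the argument must instead be that we orient $T_i$-$T_i$ edges and $T_i$-$(B_{3-i}\setminus T_{3-i})$ edges all \emph{into} $T_i$, and order $B_i$ so $T_i$ comes last; then $v\in T_i$ has same-clique out-degree $\le |T_i|-1\le 4399$, which we need $\le |B_i|-45$ — true since $|B_i|\ge 52811$ — plus at most $11-(\text{inward cross edges})$ outward cross edges, so out-degree $\le 4399+11 < |B_i|-44\le|L(v)|$. Meanwhile $v\in B_i\setminus T_i$ has same-clique out-degree $\le |B_i|-|T_i|-1$ (everything in $T_i$ is above it so contributes nothing... no, edges to $T_i$ point \emph{up}, into $T_i$, so they \emph{do} count), giving $\le |B_i|-1$; its outward cross edges number at most $11$, but these go only to $T_{3-i}$ (cross edges to $B_{3-i}\setminus T_{3-i}$ are oriented by the ``toward-the-$T$-side'' rule, i.e.\ arbitrarily, say always toward $B_2$), so the bookkeeping needs one more case split. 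I expect assembling this case analysis cleanly — choosing the orientation of the four kinds of cross edges ($T_1$-$T_2$, $T_1$-$(B_2\setminus T_2)$, $(B_1\setminus T_1)$-$T_2$, $(B_1\setminus T_1)$-$(B_2\setminus T_2)$) and the two intra-clique orders so that \emph{all five list conditions} are tight enough — to be the main obstacle; the numbers $52811$, $4400$, $44$, $11$ are presumably exactly what make it close out.

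The second half is verifying that $D$ is kernel perfect. The standard fact is that an orientation of a graph covered by two cliques is kernel perfect provided that within each clique the orientation is acyclic (a transitive tournament), because then in any induced subdigraph one can find a kernel by a short greedy argument: take the source (minimum) of the tournament on $B_1\cap H'$, call it $u$; if $u$ has no out-neighbour in $B_2\cap H'$ that would need to be in the kernel, put $u$ in; otherwise look at the sink side; iterating, one shows a kernel of size at most $2$ always exists. I would cite or reprove this: every induced subgraph $H'$ of $H$ is again covered by two cliques with acyclic intra-clique orientations, and such digraphs have kernels — this is exactly the ``clique-acyclic $\Rightarrow$ kernel-perfect'' phenomenon underlying Galvin's theorem, and for two cliques it is elementary. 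Concretely: let $u_1$ be the $D$-sink of $B_1\cap H'$ and $u_2$ the $D$-sink of $B_2\cap H'$ (if the respective sets are nonempty); if $u_1u_2\notin E$ or one of them is absent, then $\{u_1,u_2\}$ (or the singleton) is a kernel; if $u_1u_2\in E$, say oriented $u_1\to u_2$, then $\{u_1\}\cup\{\text{sink of }(B_2\cap H')\setminus N^+(u_1)\}$ works, and one checks domination and independence directly. So the kernel-perfectness step is short once the orientation is pinned down. Finally, combine: by construction $d^+_D(v)\le|L(v)|-1$ for all $v$, so Lemma~\ref{lem:digraph} gives an $L$-coloring of $H$, completing the proof of Lemma~\ref{lem:const}.
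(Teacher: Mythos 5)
Your high-level plan (build an orientation and invoke Lemma~\ref{lem:digraph}) is the same as the paper's, but the step you repeatedly defer --- choosing the orientation so that the out-degree bounds and kernel-perfectness hold \emph{simultaneously} --- is the entire content of the lemma, and none of your candidate orientations achieves it. With every cross edge one-way and each $B_i$ ordered transitively, a vertex $v\in B_1\setminus T_1$ near the bottom of its order whose (up to 11) cross edges point outward has out-degree up to $(|B_1|-1)+11$, while $|L(v)|$ may equal $|B_1|$; you notice this and write that ``the bookkeeping needs one more case split,'' but that case split is precisely the missing idea. The paper resolves it as follows: in each $B_i$ the vertices of $T_i$ come \emph{first}, intra-clique edges are directed toward the lower index (so $T_i$-vertices have intra out-degree at most $|T_i|-1\le 4399$, and any vertex not among the last $11$ has intra out-degree at most $|B_i|-12$), every cross edge incident to one of the last $11$ vertices of $B_1$ or $B_2$ is directed \emph{into} that vertex (so those vertices have no cross out-degree), and all remaining cross edges are \emph{bidirected}. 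The hypotheses $|B_i|\ge 52811$, $|T_i|\le 4400$ and cross-degree $\le 11$ are then used in a counting argument to choose the two sets of $11$ ``last'' vertices so that no alternating path of length at most $3$ (in particular no edge) joins them; this is where the large constants actually get spent, whereas in your sketch $|B_i|\ge 52811$ is never used beyond trivial slack --- a sign the numbers are not really ``closing out.''

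The kernel-perfectness half also has a genuine error. The ``standard fact'' you invoke --- acyclicity of the orientation \emph{within} $B_1$ and within $B_2$ suffices for kernel-perfectness --- is false once cross edges are one-way: take $u,v\in B_1$, $w\in B_2$ with $u\to v$, $v\to w$, $w\to u$; this induced triangle has no kernel although each clique's internal orientation is acyclic. The correct statement (Boros--Gurvich type) requires clique-acyclicity of \emph{mixed} triangles as well, i.e.\ it constrains exactly the cross-edge orientations you leave partly unspecified (``some fixed rule'' for $T_1$--$T_2$ edges), and you never verify it. Your concrete two-sink check also fails as written: if $u_1\to u_2$ and $w$ is the sink of $(B_2\cap H')\setminus N^+(u_1)$, then $\{u_1,w\}$ need not be independent (the edge $wu_1$ may exist, directed $w\to u_1$), and a vertex of $N^+(u_1)\cap B_2$ lying above $w$ in the order has no out-neighbour in $\{u_1,w\}$, so domination fails. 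The paper's kernel argument avoids these traps by making all cross edges digons except those entering the two special $11$-sets, and by using the no-short-alternating-path property of those sets when analysing the four candidate kernels $\{x_p\}$, $\{x_p,y_s\}$, $\{y_q\}$, $\{x_r,y_q\}$; without some substitute for that structure, your argument does not go through.
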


\begin{proof}
  To prove this result we construct an orientation $D$ of $H$ such
  that $D$ satisfies the hypotheses of Lemma~\ref{lem:digraph}. We
  first show that we can order the vertices $x_1,\ldots,x_{|B_1|}$ and
  $y_1,\ldots,y_{|B_2|}$ of $B_1$ and $B_2$ such that
  $T_1=\{x_1,\ldots,x_{|T_1|}\}$, $T_2=\{y_1,\ldots,y_{|T_2|}\}$ and
  every path beginning and ending in
  $\{x_{|B_1|-10},\ldots,x_{|B_1|},y_{|B_2|-10},\ldots,y_{|B_2|}\}$
  that alternates between $B_1$ and $B_2$ has length at least
  $4$. Note that a single edge may be an alternating path, so we
  require that no edge joins $x_i$ and $y_j$ whenever
  $i\geq |B_1|-10$ and $j\geq |B_2|-10$.
  
  \subsubsection*{Definition of the Orderings} 

  We now construct the vertex orderings in the previous paragraph.
  Their only non-trivial property is the absence of short alternating paths
  between the final 11 vertices in $B_1$ and those in $B_2$.
  So, our goal is to construct $Z_1\subset B_1$ and $Z_2\subset B_2$
  with $|Z_1|=|Z_2|=11$ such that no alternating path of
  length at most $3$ begins in $Z_1$ and ends in $Z_2$. To this
  end, we first define $Z_2$, then count the number of vertices in
  $B_1$ reachable from $Z_2$ with such an alternating path.

  If there exists $v\in B_1\setminus N(T_2)$\aside{$Z_2, v$} with 11 neighbors in
  $B_2$, then we take $Z_2=N_H(v)\cap B_2$. If no such vertex exists,
  then we swap the roles of $B_1$ and $B_2$, take $Z_2$ as any subset
  of $B_2\setminus (T_2\cup N(T_1))$ of size $11$ (this is always
  possible since $|B_2|\geqslant 52811\geqslant |T_2|+11|T_1|+11$),
  and let $v$ be any vertex of $B_1$.
  Since every element of $Z_2$ has at most $10$ neighbors in
  $B_1\setminus\{v\}$, we have
  $|N_{B_1}(Z_2)\setminus \{v\}|\leq 11\times 10=110$.
  Moreover, each vertex in $N_{B_1}(Z_2)\setminus\{v\}$ has at most
  $11$ neighbors in $B_1$ (one of them being in $Z_2$). Since the only
 neighbors of $v$ in $B_2$ are in $Z_2$, we obtain
  \[|N_{B_2}(N_{B_1}(Z_2))\setminus Z_2|\leq 11\times 10^2=1100.\]
  By the same argument, the set of vertices of
  $B_1$ reachable from $Z_2$ with an alternating path of length
  exactly $3$ has size
  \[|N_{B_1}(N_{B_2}(N_{B_1}(Z_2))\setminus Z_2)|\leq 1100\times
    10=11000.\]
  So the number of vertices of $B_1$ that are excluded from appearing in $Z_1$,
because of paths to $Z_2$, is at most
  \[|N_{B_1}(N_{B_2}(N_{B_1}(Z_2))\setminus Z)|+|N_{B_1}(Z_2)\setminus
    \{v\}|+|\{v\}|=11000+110+1=11111.\] 
  Further, we must also remove vertices of $T_1$.  Thus, we can choose $Z_1$ as
  desired, since $|B_1|-|T_1|-11111\geq 11$. 

  \subsubsection*{Definition of the Orientation}

  For each edge with both endpoints in the same clique, direct it toward
  the vertex of lower index. For every other edge, direct it in
  both directions, unless one of its endpoints is among the last
  11 vertices of $B_1$ or $B_2$. In this case, direct the edge toward
  this endpoint.
  
  \subsubsection*{The Orientation is Kernel-perfect}

  Let $A\subseteq V(H)$\aside{$A$, $x_p$, $y_q$}, with
  $A\neq\varnothing$. We look for a kernel of $A$. Let $x_p$
  (resp.~$y_q$) denote the vertex with smallest index in $A\cap B_1$
  (resp.~$A\cap B_2$), if it exists. If $A\cap B_1=\varnothing$, then
  $\{y_q\}$ is a kernel. Similarly, if $A\cap B_2=\varnothing$, then
  $\{x_p\}$ is a kernel. So we assume that both $x_p$ and $y_q$ are
  well-defined. We can also assume that $x_py_q\in E(H)$, since
  otherwise $\{x_p,y_q\}$ is a kernel.

  Let $x_r$ (resp.~$y_s$)\aside{$x_r$, $y_s$} denote the vertex with smallest
  index in $A\cap B_1$ (resp.~$A\cap B_2$) that is not a neighbor of $y_q$
  (resp.~$x_p$).

    We now prove that at least one of $\{x_p\}$, $\{x_p,y_s\}$,
  $\{y_q\}$ and $\{x_r,y_q\}$ is a kernel. Assume the contrary. Since
  $\{x_p,y_s\}$ is not a kernel, there exists $y_j$ such that
  $q\leq j<s$ and either there is no edge $x_py_j$ or it is directed
  only towards $y_j$. Due to the choice of $s$, this edge is present
  in $H$ and is thus directed only one way. (If $y_s$ is not well-defined, i.e.
  if $x_p$ is adjacent to every vertex in $A\cap B_2$,
  we can obtain the same result using that $\{x_p\}$ is not a kernel.)

  Similarly, using that $\{x_r,y_q\}$ is not a kernel (or only
  $\{y_q\}$ if $y_q$ is adjacent to every vertex in $A\cap B_1$), we
  have an edge $x_iy_q$ directed only towards $x_i$.

  Since $x_iy_q$ and $x_py_j$ are directed towards $x_i$ and $y_j$,
  this ensures that $x_i$ and $y_j$ are both among the final 11
  vertices of $B_1$ and $B_2$. However, this is impossible, since
  $x_iy_qx_py_j$ would be a path of length $3$ that alternates between
  $B_1$ and $B_2$ and begin and ends in the final 11 vertices of $B_1$
  and $B_2$. Thus, either $\{x_p,y_s\}$, $\{x_p\}$, $\{x_r,y_q\}$ or
  $\{y_q\}$ is a kernel of $A$. So the orientation is kernel-perfect.

  \subsubsection*{The Orientation has Small Out-degrees}

  We now prove that $|L(v)|\geq d^+(v)+1$ for every $v\in
  V(H)$. By symmetry, it suffices to prove this
  for all $v\in B_1$, i.e., $v=x_i$ whenever $i\in\{1,\ldots,|B_1|\}$.
  If $i\leq |T_1|$, i.e., $v\in T_1$, then $v$ has at most
    $|T_1|-1\leq 4399$ out-neighbors in $B_1$ and at most $11$
    out-neighbors in $B_2$. So $\deg^+(v)+1\leq 4411 \leq |B_1|-44\leq |L(v)|$.
If $|T_1|<i\leq |B_1|-11$, then $v$ has at most
    $|B_1|-12$ out-neighbors in $B_1$ and at most $11$ in $B_2$. So
    $\deg^+(v)+1\leq |B_1|\leq |L(v)|$.
If $i>|B_1|-11$, then every out-neighbor of $v$ is in $B_1$,
    so $\deg^+(v)+1\leq |B_1|\leq |L(v)|$.
\end{proof}

We now use this lemma to prove Proposition~\ref{prop:smallreg}, i.e., that
large regions are reducible for square choosability.

\begin{proof}[Proof of Proposition~\ref{prop:smallreg}]
  We use proof by contradiction. Assume that $G$ has an $r$-region $R$
  with $r\geq 475353$. Let $v_1$ and $v_2$ be adjacent vertices of $R$
  such that any vertex at distance 2 in $G$ from $\{v_1,v_2\}$ lies in
  $\{b_1,b_2\}\cup V(R)\cup N(b_1)\cup N(b_2)$. To see that such
  vertices exist, pick $v_1\in B_1$ such that each face containing
  $v_1$ is in $R$, and let $v_2$ be a neighbor of $v_1$ in
  $B_2\cup D$.

  Let \Emph{$T$} denote the set of vertices in $B_1\cup B_2$ that
  appear on a face of $G$ not in $R$.  Note that $|T|\leq 4$; this is
  because each vertex of $T$ must lie on the first or last edge of the
  $r$-region in $G'$, and each of these edges has exactly one vertex
  in each of $B_1$ and $B_2$. Let
  $T^{(1)}=N(T)\cap V(R)$\aside{$T^{(1)}$, $T^{(2)}$ $T^{(3)}$},
  $T^{(2)}=N(T^{(1)})\cap V(R)$ and $T^{(3)}=N(T^{(2)})\cap V(R)$, so
  that for $1\leqslant i\leqslant 3$, $T\cup \cdots \cup T^{(i)}$ is
  the set of vertices of $V(R)$ at distance at most $i$ from $T$ (with
  the distance taken in $V(R)$). By Lemma~\ref{lem:fewedges}, each
  vertex of $T$ has at most 10 neighbors in $V(R)$, so
  $|T^{(1)}|\leq 40$, $|T^{(2)}|\leq 400$ and $|T^{(3)}|\leq 4000$.

  By minimality, $(G-v_1v_2)^2$ has an $L$-coloring
  $\vph$.\aside{$\vph$, $B'_i$} Let $B'_i=B_i\setminus N[T]$.  We
  uncolor the vertices of $B'_1\cup B'_2\cup D$. We also define $T_i$
  as the set of vertices of $B'_i$ with some colored neighbor from
  $V(R)$ in $G^2$, i.e., $T_i=B'_i\cap (T^{(2)}\cup T^{(3)})$.
  Finally, let $H=G^2[B'_1\cup B'_2]$.\aside{$T_i$, $H$} Note that
  $B'_1$ and $B'_2$ are cliques in $H$. Moreover, they are disjoint
  since $B'_1\cap B'_2\subset B_1\cap
  B_2=\varnothing$. 

  Our goal is now to apply Lemma~\ref{lem:const} to $L'$-color $H$,
  where $L'$ is the list assignment formed from $L$ by removing all
  colors already used on vertices at distance at most $2$:
  \[L'(v)\aside{$L'(v)$}=L(v)\setminus\{\vph(w), w\in N^2(v)\setminus (V(H)\cup
    D)\}.\] 

  We prove that the hypotheses of Lemma~\ref{lem:const} are satisfied.

  Suppose $v\in B'_1$. Now
    $|N^2(v)\cap B'_2|= |N(v)\cap B'_2|+\sum_{w\in N(v)}
    |N(w)\cap B'_2|$.  By Lemma~\ref{lem:fewedges}, for each
    $w\in V(R)$, $|N(w)\cap B'_2|\leq 1$. Moreover, if
    $w\notin V(R)$, then $|N(w)\cap B'_2|=0$, unless $w=b_2$. Since
    $b_2\notin N(v)$, we get
    \[|N^2(v)\cap B'_2|\leq 1+|N(v)\cap V(R)|\leq 11.\]

    Suppose $v\in B'_1\setminus T_1$. By definition, $v$ is distance
    at least four from $T$ (in $V(R)$), hence at distance at least
    three (in $V(R)$) from $N[T]$, the set of colored vertices of
    $V(R)$. So the only colored neighbors of $v$ in $G^2$ are in
    $\{b_1,b_2\}\cup (N(b_1)\setminus B'_1)$. Hence, we have
    \[|L'(v)|\geq k+2-(2+k-|B'_1|)= |B'_1|.\]

    Suppose $v\in T_1$. By construction, its colored neighbors in
    $G^2$ are in
    $\{b_1,b_2\}\cup (N(b_1)\setminus B'_1)\cup T\cup T^{(1)}$. Since
    $|T|+|T^{(1)}|\leq 44$, we have $|L'(v)|\geq |B'_1|-44$.

    We already saw that
    $|T_1|\leq |T^{(2)}\cup T^{(3)}|\leq 400+4000=4400$.  There are
    $r+1$ edges in the region $R$ (in $G'$). Every such edge (except
    $b_1b_2$ if it exists) corresponds to a path containing a vertex
    in $B_1$. By Lemma~\ref{lem:fewedges}, each vertex in $B_1$
    accounts for at most nine of them. Therefore,
    $|B_1|\geq \frac{r}{9}$.
    Observe also that $|N[T]\cap B_1|\leq 6$ since
    $|T\cap B_1|=2$ and, by Lemma~\ref{lem:fewedges}, every vertex of
    $B_1\cup B_2$ has at most one neighbor in each of $B_1$ and $B_2$. We
    thus obtain:
    \[|B'_1|\geq |B_1|-|N[T]\cap B_1|\geq \frac{r}{9}-6\geq 52811.\]
  We can thus apply Lemma~\ref{lem:const} to find an $L'$-coloring of $H$. 

  It remains to color the vertices in $D$. Note that each has at most
  $2\sqrt{k}$ neighbors and $k+2$ colors. 
  So we can greedily color the vertices in $D$.
\end{proof}

This completes the proof of Theorem~\ref{thm:main}.

\subsection{Extension to correspondence coloring}
\label{sec5}
In this section, we prove the following extension of
Theorem~\ref{thm:main} to correspondence coloring.
(Recall the definition of correspondence coloring from the end of
Section~\ref{sec:reducibles}.)

\begin{thm}
  \label{thm:main_cc}
  There exists $\Delta_0$ such that if $G$ is a plane graph with no
  4-cycles and with $\Delta(G)\geqslant \Delta_0$, then
  $\chi_{corr}(G^2)\leqslant \Delta+2$.
\end{thm}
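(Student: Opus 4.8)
The plan is to replay the entire argument of Section~\ref{sec3}, verifying that each step goes through for correspondence coloring rather than list coloring. The structure is identical: take a minimal counterexample $G$ (minimizing $|E(G)|+|V(G)|$) equipped with a $(k+2)$-correspondence assignment $C$ of $G^2$ admitting no $C$-coloring, build $G', G'', G'''$ exactly as before, and derive a contradiction from Propositions~\ref{prop:bigreg} and~\ref{prop:smallreg}. The key observation making this routine is that almost every reducible configuration in Section~\ref{sec3} was reduced by a counting-and-greedy argument: we removed a few vertices, applied minimality to color the rest, and then observed that each removed vertex had fewer colored neighbors in $G^2$ than its number of available colors. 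These arguments are insensitive to whether the constraint on an edge is ``distinct colors'' or ``non-matched colors'': in both settings, each colored neighbor forbids at most one color. So Lemmas~\ref{sec3:lem1}--\ref{lem:smallfacesaux} and all of Subsection~\ref{subsec:struct} (which is purely structural and does not mention colors at all) carry over verbatim, as does Proposition~\ref{prop:bigreg} (also colorless). The only place where the coloring method is not pure greedy is in Proposition~\ref{prop:smallreg}, where we invoke the Kernel Lemma (Lemma~\ref{lem:digraph}) via Lemma~\ref{lem:const}.

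So the heart of the work is to prove a correspondence-coloring analogue of Lemma~\ref{lem:const}: given a graph $H$ covered by two disjoint cliques $B_1,B_2$ with the same five numerical hypotheses, and a $(k+2)$-correspondence assignment, $H$ admits a $C$-coloring. Here I expect to use the standard fact that the Kernel Lemma extends to correspondence coloring (this is how correspondence coloring was originally set up by Dvo\v{r}\'ak and Postle): if $D$ is a kernel-perfect orientation of $H$ and $|L(v)| \ge d^+(v)+1$ for all $v$, then for any correspondence assignment on $H$ with $|L(v)|$ colors at $v$, $H$ has a $C$-coloring — one processes an independent ``kernel'' set greedily and recurses, using that a kernel absorbs all out-neighbors. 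I would state this as a lemma (the correspondence version of Lemma~\ref{lem:digraph}), citing Dvo\v{r}\'ak--Postle, and then observe that the entire proof of Lemma~\ref{lem:const} — the construction of the two vertex orderings, the choice of $Z_1, Z_2$, the orientation, the verification that it is kernel-perfect, and the out-degree bookkeeping — is unchanged, since none of it refers to the colors themselves, only to list sizes and out-degrees. Then the proof of Proposition~\ref{prop:smallreg} goes through: uncolor $B_1'\cup B_2'\cup D$, form $C'$ from $C$ by deleting from each vertex's color set the (at most one per neighbor) colors matched to already-colored $G^2$-neighbors outside $V(H)\cup D$, check the five hypotheses exactly as before (the bounds $|N^2(v)\cap B_2'|\le 11$, $|B_1'|\ge r/9-6\ge 52811$, $|T_1|\le 4400$, and the two list-size bounds depend only on counting neighbors), apply the correspondence Kernel Lemma to color $H$, and finally greedily color $D$ since each vertex of $D$ has at most $2\sqrt k < k+2$ neighbors in $G^2$.

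The main (and really only) obstacle is stating the Kernel Lemma correctly for correspondence coloring and making sure the reduction of list-size hypotheses to out-degree hypotheses is valid in that setting; once that is in place, everything else is a line-by-line transcription, because every other reducibility argument in Section~\ref{sec3} is a greedy argument of the form ``$v$ has $\le m$ colored $G^2$-neighbors and $> m$ available colors,'' which is valid for correspondence coloring word for word. I would therefore organize the write-up as: (1) recall/state the correspondence Kernel Lemma; (2) state the correspondence analogue of Lemma~\ref{lem:const} and note its proof is identical; (3) note that Lemmas~\ref{sec3:lem1}--\ref{lem:smallfacesaux}, all structural lemmas, and Proposition~\ref{prop:bigreg} hold verbatim (the first group because each deleted vertex loses at most one color per neighbor, the latter because they are colorless); (4) reprove Proposition~\ref{prop:smallreg} using the correspondence Kernel Lemma in place of Lemma~\ref{lem:digraph}; (5) conclude by the same $\frac{\sqrt k}{50}-37 < 475353$ contradiction, which forces $k < \Delta_0$ and proves Theorem~\ref{thm:main_cc}.
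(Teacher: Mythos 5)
Your overall plan agrees with the paper's up to the crucial step, but the step you describe as ``the main (and really only) obstacle'' is a genuine gap: the Kernel Lemma does \emph{not} extend to correspondence coloring, and the paper says so explicitly when it abandons Lemma~\ref{lem:const} in Section~\ref{sec5}. The ``standard fact'' you invoke is false, and it is not how Dvo\v{r}\'ak and Postle set up correspondence coloring. A concrete counterexample: orient the $4$-cycle $v_1v_2v_3v_4$ cyclically. This orientation is kernel-perfect and every out-degree is $1$, so a correspondence version of Lemma~\ref{lem:digraph} would make $C_4$ colorable from any $2$-correspondence assignment; but taking the identity matching on $v_1v_2,v_2v_3,v_3v_4$ and the swap $1\leftrightarrow 2$ on $v_4v_1$ gives a $2$-correspondence assignment with no $C$-coloring (this is the standard example showing even cycles have correspondence chromatic number $3$). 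The proof of Lemma~\ref{lem:digraph} breaks precisely because colors are global there: one fixes a color $c$, takes a kernel $K$ of the subdigraph induced by vertices whose lists contain $c$, colors all of $K$ with $c$, and each uncolored vertex of that subdigraph loses exactly one color while losing at least one out-neighbor. With a correspondence assignment there is no color class to induce on: once kernel vertices are colored, a colored \emph{in}-neighbor of $v$ forbids a color on $v$ through $C_{uv}$ without decreasing $d^+(v)$, and distinct colored neighbors can forbid distinct colors, so the invariant $|L(v)|\ge d^+(v)+1$ cannot be maintained. So item (1) of your outline cannot be carried out, and items (2) and (4) collapse with it.

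What the paper does instead is replace Lemma~\ref{lem:const} by Lemma~\ref{lem:twocliques}, proved by a direct greedy argument with a ``color saving'' device and no kernels at all: one selects a small set $A\subset B_1\setminus T_1$ of size $\Delta(H)+1-n$ whose vertices have pairwise disjoint neighborhoods in $B_2$; for each $v\in A$ and each $w\in N(v)\cap B_2$ one also picks $x\in B_1\setminus N(w)$ and, after assuming $C_{vw}$ saturates the colors of $w$, uses pigeonhole ($g(w)+g(x)>n$) to color $w$ and $x$ with colors that forbid the \emph{same} color on $v$; after saving a color $|N(v)\cap B_2|$ times for each $v\in A$, the remaining vertices are colored greedily, ending with $A$. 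Your other points are fine and match the paper --- Lemmas~\ref{sec3:lem1} through~\ref{lem:fewedges} and Proposition~\ref{prop:bigreg} do carry over verbatim, since they are either colorless or pure ``more available colors than colored $G^2$-neighbors'' arguments --- but to complete the proof you must substitute an argument like Lemma~\ref{lem:twocliques} for the kernel step; note this also changes the numerics (the paper proves $r\le 52821$ in Proposition~\ref{prop:smallreg++} and gets a smaller $\Delta_0$ than in Theorem~\ref{thm:main}).
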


Let $\Delta_0=2642900^2=6984920410000$, and fix
$k\geqslant \Delta_0$.\aside{$\Delta_0$, $k$} We prove
Theorem~\ref{thm:main_cc} by contradiction. Suppose the theorem is
false; let $G$ be a counterexample minimizing $|V(G)|+|E(G)|$, and let
$C$ be a $(k+2)$-correspondence assignment for $G^2$ such that $G^2$
has no $C$-coloring. So $C$ assigns, to each pair of vertices $(v,w)$
adjacent in $G^2$, a partial matching $C_{vw}$ between
$\{v\}\times \{1,\ldots,k+2\}$ and $\{w\}\times \{1,\ldots,k+2\}$.

We claim that Lemmas~\ref{sec3:lem1} through~\ref{lem:fewedges} still hold for
$G$ in
this new setting, since in proving each lemma we color vertices using only
that they have more available colors than colored neighbors. So
Proposition~\ref{prop:bigreg} also still holds. It thus suffices to prove
the following generalization of Proposition~\ref{prop:smallreg} for $G$.

\begin{prop}
  \label{prop:smallreg++}
  Every $r$-region of $G$ satisfies $r\leqslant 52821$.
\end{prop}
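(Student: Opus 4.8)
The plan is to mimic the proof of Proposition~\ref{prop:smallreg}, but now reducing an $r$-region by a correspondence coloring argument rather than a list coloring one. So I would again argue by contradiction: suppose $G$ contains an $r$-region $R$ with $r$ larger than the claimed threshold, pick adjacent vertices $v_1,v_2\in R$ whose second neighborhood in $G$ is confined to $\{b_1,b_2\}\cup V(R)\cup N(b_1)\cup N(b_2)$ (exactly as before, choosing $v_1\in B_1$ so all faces at $v_1$ lie in $R$, and $v_2\in B_2\cup D$ a neighbor of $v_1$), and use minimality to obtain a $C$-coloring $\varphi$ of $(G-v_1v_2)^2$. I would then uncolor the vertices of $B_1'\cup B_2'\cup D$, where $B_i'=B_i\setminus N[T]$ and $T$ is the (at most $4$) vertices of $B_1\cup B_2$ on faces outside $R$. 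The sets $T^{(1)},T^{(2)},T^{(3)}$ and $T_i=B_i'\cap(T^{(2)}\cup T^{(3)})$ are defined exactly as in the proof of Proposition~\ref{prop:smallreg}, and Lemma~\ref{lem:fewedges} gives the same cardinality bounds $|T^{(1)}|\le 40$, $|T^{(2)}|\le 400$, $|T^{(3)}|\le 4000$, hence $|T_i|\le 4400$ and $|B_i'|\ge r/9-6$.

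The key point is to replace Lemma~\ref{lem:const} (a statement about choosability of a graph covered by two cliques) with its correspondence-coloring analogue. So I would state and prove a lemma: if $H$ is a graph covered by two disjoint cliques $B_1,B_2$, with $T_i\subset B_i$, and $C$ is a correspondence assignment on $H$ satisfying the same five numerical hypotheses as Lemma~\ref{lem:const} (with "$|L(v)|$" replaced by "the number of colors of $v$"), then $H$ admits a $C$-coloring. The proof should follow the same skeleton: construct the two vertex orderings with no short alternating path between the last $11$ vertices of $B_1$ and of $B_2$ (this part is purely combinatorial and carries over verbatim), and then orient $H$ the same way — edges inside a clique toward the lower index, cross edges in both directions unless an endpoint is among the final $11$ vertices, in which case toward that endpoint. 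Kernel-perfectness of this orientation is a statement about the digraph alone, so that argument is unchanged, and Lemma~\ref{lem:digraph} (the Bondy–Boppana–Siegel kernel lemma) applies equally to correspondence assignments since its proof is really about out-degrees versus list sizes. The out-degree computation is identical. So the new clique-cover lemma holds with essentially the same constants; the only adjustment is that the error term "$-44$" in hypothesis~4 now must absorb $|T|+|T^{(1)}|\le 44$ colored neighbors, exactly as before.

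With this lemma in hand, the rest goes through just as in the proof of Proposition~\ref{prop:smallreg}. I would verify the five hypotheses for $H=G^2[B_1'\cup B_2']$ with the correspondence assignment $C'$ obtained by deleting from each $C_{vw}$ the matched pairs forbidden by already-colored vertices at distance $\le 2$ outside $V(H)\cup D$: hypothesis~3 ($|N^2(v)\cap B_{3-i}'|\le 11$) is again immediate from Lemma~\ref{lem:fewedges}; hypotheses~4 and~5 count colored neighbors of $v$ in $G^2$, which lie in $\{b_1,b_2\}\cup(N(b_1)\setminus B_1')$ plus, for $v\in T_1$, the set $T\cup T^{(1)}$ — giving $|B_i'|$ and $|B_i'|-44$ available colors respectively; hypotheses~1 and~2 follow from $|B_i|\ge r/9$ (each region edge in $G'$ passes through a distinct-enough vertex of $B_i$, by Lemma~\ref{lem:fewedges}) once $r$ is large enough. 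Solving $r/9-6\ge(\text{the clique-cover threshold})$ yields the bound $r\le 52821$ in the statement. Finally the vertices of $D$ are colored greedily, each having at most $2\sqrt{k}$ neighbors in $G^2$ and $k+2$ available matched colors.

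The main obstacle — really the only subtlety — is checking that the kernel lemma machinery is genuinely insensitive to the difference between list coloring and correspondence coloring. This is where one must be careful: Lemma~\ref{lem:digraph} is usually phrased for list colorings, but its proof (greedily, in reverse topological-ish order dictated by kernels) only ever asks that a vertex have strictly more available colors than already-used-up out-neighbors, and "used up" makes sense for a correspondence matching just as for a list. So the lemma transfers, and with it the whole argument; I would spell this out explicitly rather than wave at it, since it is the entire content of the extension. Everything else — the region structure lemmas, the ordering construction, the kernel-perfectness verification, the counting — is literally the same as in Section~\ref{subsec:smallreg}, with only the constants tracked through.
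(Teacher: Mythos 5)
Your reduction to a two-cliques lemma, the choice of $v_1,v_2$, the sets $T, T^{(i)}, T_i, B_i'$, and the final greedy coloring of $D$ all match the paper's proof of Proposition~\ref{prop:smallreg} and are fine. The gap is the step you yourself flag as ``the entire content of the extension'': the claim that Lemma~\ref{lem:digraph} transfers to correspondence coloring. It does not, and the paper says so explicitly before stating Lemma~\ref{lem:twocliques}. The proof of the Bondy--Boppana--Siegel lemma is not a greedy out-degree count; it picks a single color $c$, takes the set $A$ of vertices whose lists contain $c$, colors a kernel of $D[A]$ with $c$, and recurses --- the inequality $|L(v)|\ge d^+(v)+1$ survives only because every uncolored vertex of $A$ that loses the color $c$ simultaneously loses an out-neighbor (one colored $c$). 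This argument needs a globally consistent identification of ``color $c$'' across vertices, which is exactly what a correspondence assignment destroys: the matchings $C_{vw}$ need not be consistent around cycles, so ``the vertices whose lists contain $c$'' is not a meaningful set. A concrete witness that the lemma fails in the correspondence setting: a directed even cycle is kernel-perfect with all out-degrees $1$, so the kernel lemma gives $2$-choosability of $C_{2k}$, yet the correspondence chromatic number of every even cycle is $3$. So your kernel-perfect orientation of $H$ proves nothing about $C$-colorings, and the core of your argument collapses.

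The paper's actual route replaces Lemma~\ref{lem:const} by Lemma~\ref{lem:twocliques}, proved by a direct ``color-saving'' argument: choose a small set $A\subset B_1\setminus T_1$ of vertices whose neighborhoods in $B_2$ are pairwise disjoint; for each $v\in A$ and each $w\in N(v)\cap B_2$, pick $x\in B_1\setminus N(w)$ and (after saturating $C_{vw}$, $C_{vx}$) use pigeonhole ($g(w)+g(x)>n$) to color $w$ and $x$ so that they forbid the \emph{same} color at $v$; then finish greedily, reserving $11$ final vertices of $B_2$ with no colored neighbor in $B_1$. This is what makes the statement correspondence-valid, and it also changes the constants: the new lemma needs only $n\ge 5863$, which is where $r\le 52821$ comes from ($52821/9-6=5863$). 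Even if your transfer of Lemma~\ref{lem:const} had been legitimate, its threshold $52811$ would only yield $r\le 475353$ as in Proposition~\ref{prop:smallreg}, not the bound $52821$ you are asked to prove.
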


Assuming this proposition holds, we can conclude. Indeed,
Propositions~\ref{prop:bigreg} and~\ref{prop:smallreg++} imply that
$\frac{\sqrt{k}}{50}-37< 52821$, i.e., that
$k< 2642900^2=6984920410000=\Delta_0$, a contradiction.

It thus remains to prove that large regions are reducible, by
generalizing Lemma~\ref{lem:const}. The argument using kernel-perfect
orientations is no longer valid, since Lemma~\ref{lem:digraph} does
not extend to correspondence coloring.

\begin{lem}
  \label{lem:twocliques}
  Let $H$ be a graph covered by two disjoint cliques, $B_1$ and $B_2$, each of
  size $n$.
  Suppose there exist $T_1\subset B_1$ and $T_2\subset B_2$, and a function $f$
  satisfying the four properties below.  If $n\ge 5863$,
  then every $f$-correspondence assignment $C$ admits a $C$-coloring.
  \begin{enumerate}
  \item For each $v\in (B_1\setminus T_1)\cup(B_2\setminus T_2)$,
    we have $f(v)\geqslant n$.
  \item For each $v\in T_1\cup T_2$, we have $f(v)\geqslant n-44$.
  \item $|T_1|\leqslant 4400$ and $|T_2|\leqslant 4400$.
  \item $\Delta(H)-n+1\leqslant 11$.
  \end{enumerate}
\end{lem}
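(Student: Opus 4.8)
The plan is to mimic the structure of the proof of Lemma~\ref{lem:const}, but replacing the kernel-perfect-orientation argument (which is unavailable for correspondence coloring) by a direct greedy colouring argument in a carefully chosen vertex order. As in the list-colouring case, the two cliques $B_1$ and $B_2$ force us to colour almost all of each clique essentially simultaneously, so the key point is to set up an order in which, at the moment a vertex is coloured, it has strictly more available colours than already-coloured neighbours in $H$.

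First I would fix an ordering of each clique. Put the vertices of $T_1$ first in $B_1$ and the vertices of $T_2$ first in $B_2$, and — exactly as in Lemma~\ref{lem:const} — reserve a block $Z_1\subset B_1$, $Z_2\subset B_2$ of the last $11$ vertices of each clique so that no short alternating path (length $\le 3$, alternating between $B_1$ and $B_2$) joins $Z_1$ to $Z_2$. The counting that produces such $Z_1,Z_2$ is identical to the one already carried out: each vertex of $B_i$ has at most $11$ neighbours in $B_{3-i}$, so the number of vertices of $B_1$ reachable from an $11$-element $Z_2$ by an alternating path of length $\le 3$ is at most $11+11\cdot 10+11\cdot 10^2+11\cdot 10 = 11+110+1100+11000 = 12221$ (a crude bound), and since $n\ge 5863$ is large enough after also discarding $T_1$, such a $Z_1$ exists. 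This uses only properties (3) and (4). Now order $V(H)$ as follows: first all of $B_1\setminus Z_1$ interleaved with $B_2\setminus Z_2$ in any fashion respecting the intra-clique orders, and then the $22$ vertices of $Z_1\cup Z_2$ last (in any order).

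Next I would check that greedy colouring in this order works, i.e. that each vertex $v$ has $f(v)$ exceeding the number of its already-coloured $H$-neighbours. There are three cases, paralleling the out-degree computation in Lemma~\ref{lem:const}. If $v\in T_i$, all its previously coloured neighbours lie in $B_i$ (since vertices of $T_i$ come first within their clique, and here the cross-clique neighbours it could see are at most $11$, but in fact none of those is earlier)\footnote{More precisely: a vertex of $T_i$ is preceded only by other vertices of $T_i$; so it has at most $|T_i|-1\le 4399$ earlier neighbours, well below $n-44\le f(v)$.}; for $v\in B_i\setminus T_i$ not in $Z_i$, it has at most $n-1$ earlier neighbours in its own clique minus at least one (itself comes before the last block of its clique) — the clean bound is $\le n-1$ earlier neighbours total, no wait — the point is it has at most $(n-1)$ intra-clique predecessors but we only need $f(v)\ge n$, and among those predecessors at most $n-1$; together with at most $11$ cross-clique neighbours this could exceed $n$, so here I must be more careful: since $v\notin Z_i$, at least the $11$ vertices of $Z_i$ come after $v$, so $v$ has at most $n-12$ intra-clique predecessors, plus at most $11$ cross-clique neighbours, giving $\le n-1 < n\le f(v)$. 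Finally, for $v\in Z_i$: every cross-clique neighbour of $v$ lies in $B_{3-i}\setminus Z_{3-i}$ (there are no edges from $Z_i$ to $Z_{3-i}$ by the single-edge case of the alternating-path condition), and those have already been coloured, but — crucially — the absence of alternating paths is not yet quite what bounds the neighbour count; simpler: every neighbour of $v\in Z_i$ in $B_{3-i}$ is coloured, but there are at most $11$ of them, plus at most $n-1$ in $B_i$, so $\le n+10$, which is too many. Hence I would instead orient the cross-clique edges at $Z_i$ away from $Z_i$ conceptually: place $Z_i$ so that within the final block, all of $B_1$'s $Z$-vertices precede all of $B_2$'s, and note a $Z_1$-vertex then has no $B_2$-predecessor at all (its $B_2$-neighbours are in $B_2\setminus Z_2$, coloured in the first phase — that is the problem). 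The resolution, mirroring the list case exactly, is that $Z_i$-vertices are precisely the ones for which the cross-clique neighbours were directed toward them, i.e. we must colour $Z_i$-vertices \emph{before} their $B_{3-i}$-neighbours; so in fact the order should put $Z_1\cup Z_2$ \emph{first}. Re-examining: a $Z_1$-vertex $v$ then has at most $10$ earlier neighbours (the other $Z_1$-vertices) plus $0$ in $B_2$ (no $Z_1$–$Z_2$ edges, and $B_2\setminus Z_2$ comes later), total $\le 10 < f(v)$; a $Z_2$-vertex likewise has $\le 21$ earlier neighbours ($10$ in $Z_2$, $\le 11$ in $Z_1$), fine. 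Then in the main phase, a vertex $v\in B_i\setminus(T_i\cup Z_i)$ has at most $n-1$ intra-clique predecessors but now the $Z_i$ vertices are \emph{before} it, so that is fine only if $Z_i$-free predecessors number $\le n-1$ — they do, $|B_i|-1=n-1$, and $f(v)\ge n$, but it also sees $\le 11$ cross-clique neighbours, some already coloured. So I need: the $\le 11$ cross-clique neighbours of $v$ are \emph{not} all coloured — but they might be. This is exactly why the original proof needs the alternating-path machinery and the kernel lemma rather than a naive greedy order, and

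**this is the main obstacle:** with two big cliques, no single vertex order makes greedy work, because each clique vertex sees $\sim n$ same-clique neighbours and the small surplus in the list sizes is eaten by the $\le 11$ cross edges. The correct approach (and what I would actually do) is therefore not pure greedy but the following two-stage procedure. Stage 1: for each $i$, choose the unique colouring-by-index as in Lemma~\ref{lem:const} but implemented via repeated application of a \emph{Hall-type / SDR} argument on the bipartite "colour–vertex" incidence within each clique, using that $|L'(v)|\ge |B_i|$ for all but $\le 4400$ vertices and $\ge|B_i|-44$ for the rest — so each clique, ignoring cross edges, is colourable by a system-of-distinct-representatives argument (a clique of size $n$ with all lists of size $\ge n$ minus a few deficient ones of size $\ge n-44$ has a proper colouring iff a Hall condition holds, which the size bounds guarantee). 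Stage 2: handle the $\le 11$ cross edges per vertex by a local recolouring/exchange along alternating paths, which is where the reserved sets $Z_1,Z_2$ and the no-short-alternating-path property make the exchange terminate without cascading; concretely, when a cross edge $(v,w)$ with $v\in B_1$, $w\in B_2$ is monochromatic in the matching $C_{vw}$, we recolour the endpoint lying in the later clique-block using a spare colour, and the reserved last-block structure guarantees a spare colour is available because those vertices were given "extra room". I expect Stage 2 — making the correspondence exchanges terminate and not conflict with each other — to be the delicate part, and it is precisely where the numerical slack $n\ge 5863$, $|T_i|\le 4400$, $\Delta(H)-n+1\le 11$ is consumed; the bookkeeping there is the analogue of verifying kernel-perfectness in Lemma~\ref{lem:const}, and I would carry it out by induction on $|V(H)|$ exactly as a kernel argument would, but phrased directly in terms of picking, in any nonempty induced subgraph, a vertex of smallest index in each clique and checking one of a bounded number of small vertex sets forms a "correspondence kernel" (an independent-in-$H$ set that dominates, with the matching condition satisfied) — the same four candidate sets $\{x_p\},\{x_p,y_s\},\{y_q\},\{x_r,y_q\}$ as before, with the verification that a monochromatic cross edge would force a forbidden short alternating path between final-block vertices.
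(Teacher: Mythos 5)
There is a genuine gap, and it sits exactly at the point you flag as ``the delicate part.'' Your Stage~1 (coloring each clique separately) is fine --- indeed plain greedy suffices there, no Hall/SDR machinery is needed, since a deficient vertex of $T_i$ has at most $|T_i|-1\le 4399$ colored clique-neighbors against $n-44\ge 5819$ colors. But Stage~2, where the at most $11$ cross edges per vertex must be absorbed, is resolved in your sketch only by (i) an unspecified ``recolouring/exchange along alternating paths,'' and (ii) ``induction \ldots exactly as a kernel argument would, but phrased directly in terms of \ldots a correspondence kernel'' with the same four candidate kernels as in Lemma~\ref{lem:const}. Point (ii) does not work: the proof of Lemma~\ref{lem:digraph} colors an entire kernel with one \emph{global} color $c$ and removes $c$ from the lists of the dominated vertices, and there is no analogue of a global color class under a correspondence assignment --- the matchings $C_{vw}$ need not compose consistently, so one cannot even define the set of vertices ``having color $c$.'' This is not a presentational issue but a real obstruction (for instance, even cycles admit kernel-perfect orientations with all out-degrees $1$, yet are not $2$-correspondence-colorable), and it is precisely why the paper abandons the kernel route for Theorem~\ref{thm:main_cc}. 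Point (i) is also not salvageable as stated: recoloring a clique vertex requires a spare color inside a clique whose vertices have only about $n$ colors each, so exchanges cascade through $B_i$ and nothing in your setup controls that.

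The paper's actual proof of Lemma~\ref{lem:twocliques} uses a different, concrete mechanism that your proposal never reaches: a ``color saving'' step driven by the correspondence matchings themselves. One fixes a set $A\subseteq B_1\setminus T_1$ of size $\Delta(H)+1-n\le 11$ whose vertices have pairwise disjoint cross-neighborhoods in $B_2$ (possible since each $v$ excludes at most $11\cdot 10$ others and $|B_1|>|T_1|+10\cdot 111$). For each $v\in A$ and each cross-neighbor $w\in N(v)\cap B_2$, one pre-colors $w$ together with some $x\in B_1\setminus N(w)$, choosing their colors $\alpha,\beta$ so that both forbid the \emph{same} color at $v$; this is a pigeonhole argument using that $C_{vw}$ and $C_{vx}$ may be assumed to saturate the smaller color set and that $g(w)+g(x)\ge 2(n-131)>n\ge f(v)$. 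After these savings, each $v\in A$ effectively loses at most $n-1$ colors, and one finishes greedily, ending with $A$, with the mild extra care that the last $11$ vertices colored in $B_2$ have no colored neighbor in $B_1$ (this is where $n\ge |T_2|+11^3+11^2+11$ is used). So the numerical hypotheses are consumed in choosing $A$ and these final vertices --- not in any kernel-type or exchange bookkeeping --- and without an idea of this kind your outline does not close.
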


\begin{proof}
Let \Emph{$A$} be a subset of $B_1\setminus T_1$ with $|A|=\Delta(H)+1-n$.
Since each vertex $v\in (B_1\setminus T_1)\cup (B_2\setminus T_2)$ has
$f(v)\ge n$ and $\Delta(H)-|A|=n-1$, it is easy to greedily $C$-color all
vertices
of $H-A$.  For example, greedily color all vertices of $T_2$, followed by those
of $B_2\setminus T_2$, followed by those of $T_1$, followed by those of
$B_1\setminus(T_1\cup A)$.  This greedy coloring is possible because at the
time we color each vertex it has more available colors than colored neighbors. 

We generally follow this approach.  However, we modify it so that
after we color $H-A$ each vertex in $A$ still has $|A|$ available
colors, and we can extend the coloring to $A$.  To do this, for each
vertex $v\in A$\aside{$v$} we will repeatedly ``save a color'', before
greedily coloring the other vertices.  To accomplish this we pick
vertices $w\in N(v)\cap B_2$\aside{$w$} and
$x\in B_1\setminus N(w)$\aside{$x$, $\alpha$, $\beta$}.  Now we color
$w$ and $x$ with some colors $\alpha$ and $\beta$ (possibly with
$\alpha=\beta$) such that $\alpha$ and $\beta$ forbid the same color
on $v$.  For each $v\in A$, we must save a color $|N(v)\cap B_2|$
times.  After doing so, we color the remaining vertices greedily (as
in the previous paragraph), ending with the vertices of $A$.  The only
change is that we must ensure that each of the final 11 vertices we
color in $B_2$ has no colored neighbor in $B_1$.  In the process of
saving colors for vertices in $A$, we color at most $11^2$ vertices in
$B_1$.  Each of these forbids at most 11 vertices in $B_2$ from
appearing among the final 11\ in $B_2$, for a total of at most $11^3$
vertices in $B_2$ forbidden.  Similarly, we color at most $11^2$
vertices in $B_2$, and these are obviously forbidden from appearing
among the final 11 vertices in $B_2$.  Thus, we can choose the desired
11 final vertices in $B_2$ (after saving colors for the vertices in
$A$), since $|B_2|\ge |T_2|+ 11^3+11^2+11$.

Note that, while
saving colors for some vertex $v\in A$, we color all neighbors of $v$ in
$B_2$.  As a result, we need that no two vertices in $A$ have a common
neighbor in $B_2$.  Each vertex $v\in A$ has at most 11 neighbors in $B_2$,
and each of these neighbors has at most 10 other neighbors in $B_1$.  Thus, each
$v\in A$ forbids at most $11(10)$ other vertices from $A$.  So, to
pick the desired $A$, we need $|B_1|> |T_1|+10(110+1)$.

Now, for each $v\in A$, we repeat the following $|N(v)\cap B_2|$
times.  Choose uncolored vertices $w\in N(v)\cap B_2$ and
$x\in B_1\setminus N(w)$.  Note that if $N(v)\subset B_1$, there is
nothing to do at all, hence we may assume that the vertex $w$
exists. Let $g(v)$, $g(w)$, and $g(x)$\aside{$g(v)$, $g(w)$ $g(x)$}
denote the number of remaining available colors for $v$, $w$, and $x$.

Without loss of generality, we assume that the bounds of Hypotheses
1. and 2. are tight, so that $f(y)=n-44$ for all $y\in T_1\cup T_2$,
and $f(y)=n$ otherwise. Since $A\cap T_1=\varnothing$, we have
$f(v)=n\geqslant f(w)$, hence we may assume that $C_{vw}$ saturates
$\{w\}\times \{1,\cdots,f(w)\}$ (otherwise, add arbitrary edges until
this is the case). Thus, each color available for $w$ forbids a color
for $v$; similarly for colors available for $x$.  By Pigeonhole, if
$g(w)+g(x)>n$, then there exist colors $\alpha$ and $\beta$, available
for $w$ and $x$ respectively, that both forbid the same color on $v$.
Suppose that thus far we have saved a total of $i$ colors for vertices
in $A$.  Therefore, the $i$ colored vertices of $B_2$ forbid $i$
colors for $w$, and its neighbors in $B_1$ forbid at most 11 colors,
so that we have $g(w)\ge f(w)-i-11\ge n-i-11\ge n-131$ and, similarly,
$g(x)\ge n-131$.  We can assume that $g(v)\le f(v)\le n$.  And clearly
$2(n-131) > n$.  Thus, the desired colors $\alpha$ and $\beta$ exist.
\end{proof}

It is worth noting that the $\Delta_0$ given by our proof of
Theorem~\ref{thm:main_cc}, namely $2642900^2$, is much smaller than
that arising from our proof of Theorem~\ref{thm:main}, namely
$23769500^2$.  By adapting the statement and proof of
Lemma~\ref{lem:twocliques}, we can extend the main result
in~\cite{BCP} to correspondence coloring (while also modestly
decreasing the $\Delta_0$ arising from that proof).

\section*{Acknowledgments}
Thanks to four referees for their feedback; in particular, one wrote a very
thorough and useful report.  Thanks also to Zden\v{e}k Dvo\v{r}\'{a}k and
Jean-S\'{e}bastien Sereni for their helpful comments after carefully reading
Section~\ref{sec3}.  Zden\v{e}k caught a few errors in an earlier version of
this paper.
\bibliographystyle{plain}
\bibliography{no4-cycles}

\begin{thebibliography}{10}

\bibitem{AlonT92}
Noga Alon and Michael Tarsi.
\newblock Colorings and orientations of graphs.
\newblock {\em Combinatorica}, 12(2):125--134, 1992.

\bibitem{AEvdH}
Omid Amini, Louis Esperet, and Jan {van den Heuvel}.
\newblock A unified approach to distance-two colouring of graphs on surfaces.
\newblock {\em Combinatorica}, 33:253--296, 2013.
\newblock Preprint available at \url{https://arxiv.org/abs/0812.1345}.

\bibitem{BCP}
Marthe Bonamy, Daniel~W. Cranston, and Luke Postle.
\newblock Planar graphs of girth at least five are square
  {$(\Delta+2)$-choosable}.
\newblock {\em J. Combin. Theory Ser. B}, 134:218--238, 2019.
\newblock Preprint available at: \url{https://arxiv.org/abs/1508.03663}.

\bibitem{BGINT}
Oleg~V. Borodin, Aleksei~N. Glebov, Anna~O. Ivanova, T.~K. Neustroeva, and
  Vladimir~A. Tashkinov.
\newblock Sufficient conditions for planar graphs to be 2-distance
  {$(\Delta+1)$}-colorable.
\newblock {\em Sib. \`Elektron. Mat. Izv.}, 1:129--141, 2004.

\bibitem{CJaeger}
Daniel~W. Cranston and Bobby Jaeger.
\newblock List-coloring the squares of planar graphs without 4-cycles and
  5-cycles.
\newblock {\em J. Graph Theory}, 85(4):721--737, 2017.
\newblock Preprint available at \url{https://arxiv.org/abs/1505.03197}.

\bibitem{Guide}
Daniel~W. Cranston and Douglas~B. West.
\newblock An introduction to the discharging method via graph coloring.
\newblock {\em Discrete Math.}, 340(4):766--793, 2017.
\newblock Preprint available at \url{https://arxiv.org/abs/1306.4434}.

\bibitem{DongXu}
Wei Dong and Baogang Xu.
\newblock 2-distance coloring of planar graphs without 4-cycles and 5-cycles.
\newblock {\em SIAM J. Discrete Math., To appear.}

\bibitem{DKNS}
Zden\v{e}k Dvo\v{r}\'ak, Daniel Kr\'{a}l, Pavel Nejedl\'{y}, and Riste
  \v{S}krekovski.
\newblock Coloring squares of planar graphs with girth six.
\newblock {\em European J. Combin.}, 29(4):838--849, 2008.

\bibitem{HvdHMR}
Fr\'ed\'eric {Havet}, Jan {van den Heuvel}, Colin {McDiarmid}, and Bruce
  {Reed}.
\newblock {List Colouring Squares of Planar Graphs}.
\newblock July 2008.
\newblock Preprint available at \url{https://arxiv.org/abs/0807.3233}.

\bibitem{Jonas}
Theodore~Kimball Jonas.
\newblock {\em Graph coloring analogues with a condition at distance two:
  {L}(2,1)-labellings and list lambda-labellings}.
\newblock ProQuest LLC, Ann Arbor, MI, 1993.
\newblock Thesis (Ph.D.)--University of South Carolina.

\bibitem{MolloySalavatipour}
Michael Molloy and Mohammad~R. Salavatipour.
\newblock A bound on the chromatic number of the square of a planar graph.
\newblock {\em J. Combin. Theory Ser. B}, 94(2):189--213, 2005.

\bibitem{schauz}
Uwe Schauz.
\newblock Flexible color lists in {A}lon and {T}arsi's theorem, and time
  scheduling with unreliable participants.
\newblock {\em Electronic J. Combin.}, 17(1):13, 2010.

\bibitem{WangLih}
Wei-Fan Wang and Ko-Wei Lih.
\newblock Labeling planar graphs with conditions on girth and distance two.
\newblock {\em SIAM J. Discrete Math.}, 17(2):264--275, 2003.

\end{thebibliography}

\end{document}